\pdfoutput=1
\documentclass[12pt,reqno]{amsart}
\textwidth=150mm
\textheight=210mm
\hoffset=-10mm
\usepackage[utf8]{inputenc}
\usepackage[english]{babel}
\usepackage{amsmath,amssymb, mathtools, MnSymbol}
\usepackage{tikz}
\usetikzlibrary{decorations.pathreplacing, calc,intersections,fit, matrix, shapes.geometric, patterns,cd}
\usepackage{tkz-euclide}
\usetkzobj{all}
\usepackage{IEEEtrantools}
\usepackage{mleftright}
\usepackage{wrapfig}
\usepackage{csquotes}
\usepackage{multicol}
\usepackage{cutwin}
\usepackage{xfrac}
\usepackage[all]{xy}
\usepackage{mathrsfs}
\usepackage{chngcntr}
\usepackage{tabularx}
\usepackage{longtable}

\usepackage[backend=biber,style=alphabetic, maxnames=5]{biblatex}
\renewbibmacro{in:}{\ifentrytype{article}{}{\printtext{\bibstring{in}\intitlepunct}}}
\addbibresource{MangWeberNHO1.bib}     

\interdisplaylinepenalty=2500

\usepackage{enumitem} 
\setlist[enumerate]{nosep, label=(\arabic*)}

\usepackage{hyperref}
\hypersetup{
    pdftitle={Non-Hyperoctahedral Categories of Two-Colored Partitions},
    pdfauthor={Alexander Mang, Moritz Weber},
    pdfsubject={Non-Hyperoctahedral Categories of Two-Colored Partitions},
    pdfkeywords={quantum groups, unitary easy quantum groups, unitary group,  tensor category, two-colored partitions, partitions of sets, categories of partitions}
}
\usepackage{xpatch}
\xpatchcmd{\proof}{\itshape}{\normalfont\proofnamefont}{}{}
\xpatchcmd{\paragraph}{\normalfont}{{\normalfont\itshape}}{}{}

% ----------------------paragraph-------------------------
 % \makeatletter
 % \def\paragraph{\@startsection{paragraph}{4}%
 %   \z@\z@{-\fontdimen2\font}%
 %   {\normalfont\bfseries}}
 % \makeatother
 % ----------------------------------------------------------

%---------------tocindents-------------------------------------
\makeatletter
\@namedef{r@tocindent4}{0pt}
\@namedef{r@tocindent5}{0pt} 
\makeatother
% ----------------------------------------------------------
\setlength{\parindent}{15pt}

\synctex=1

% ----------------------------struts-----------------------------
\newcommand\Tstrut{\rule{0pt}{2.6ex}}
%..................................................................

%---widebar------------------------------------------------
\makeatletter
\let\save@mathaccent\mathaccent
\newcommand*\if@single[3]{%
  \setbox0\hbox{${\mathaccent"0362{#1}}^H$}%
  \setbox2\hbox{${\mathaccent"0362{\kern0pt#1}}^H$}%
  \ifdim\ht0=\ht2 #3\else #2\fi
  }
%The bar will be moved to the right by a half of \macc@kerna, which is computed by amsmath:
\newcommand*\rel@kern[1]{\kern#1\dimexpr\macc@kerna}
%If there's a superscript following the bar, then no negative kern may follow the bar;
%an additional {} makes sure that the superscript is high enough in this case:
\newcommand*\widebar[1]{\@ifnextchar^{{\wide@bar{#1}{0}}}{\wide@bar{#1}{1}}}
%Use a separate algorithm for single symbols:
\newcommand*\wide@bar[2]{\if@single{#1}{\wide@bar@{#1}{#2}{1}}{\wide@bar@{#1}{#2}{2}}}
\newcommand*\wide@bar@[3]{%
  \begingroup
  \def\mathaccent##1##2{%
%Enable nesting of accents:
    \let\mathaccent\save@mathaccent
%If there's more than a single symbol, use the first character instead (see below):
    \if#32 \let\macc@nucleus\first@char \fi
%Determine the italic correction:
    \setbox\z@\hbox{$\macc@style{\macc@nucleus}_{}$}%
    \setbox\tw@\hbox{$\macc@style{\macc@nucleus}{}_{}$}%
    \dimen@\wd\tw@
    \advance\dimen@-\wd\z@
%Now \dimen@ is the italic correction of the symbol.
    \divide\dimen@ 3
    \@tempdima\wd\tw@
    \advance\@tempdima-\scriptspace
%Now \@tempdima is the width of the symbol.
    \divide\@tempdima 10
    \advance\dimen@-\@tempdima
%Now \dimen@ = (italic correction / 3) - (Breite / 10)
    \ifdim\dimen@>\z@ \dimen@0pt\fi
%The bar will be shortened in the case \dimen@<0 !
    \rel@kern{0.6}\kern-\dimen@
    \if#31
      \overline{\rel@kern{-0.6}\kern\dimen@\macc@nucleus\rel@kern{0.4}\kern\dimen@}%
      \advance\dimen@0.4\dimexpr\macc@kerna
%Place the combined final kern (-\dimen@) if it is >0 or if a superscript follows:
      \let\final@kern#2%
      \ifdim\dimen@<\z@ \let\final@kern1\fi
      \if\final@kern1 \kern-\dimen@\fi
    \else
      \overline{\rel@kern{-0.6}\kern\dimen@#1}%
    \fi
  }%
  \macc@depth\@ne
  \let\math@bgroup\@empty \let\math@egroup\macc@set@skewchar
  \mathsurround\z@ \frozen@everymath{\mathgroup\macc@group\relax}%
  \macc@set@skewchar\relax
  \let\mathaccentV\macc@nested@a
%The following initialises \macc@kerna and calls \mathaccent:
  \if#31
    \macc@nested@a\relax111{#1}%
  \else
%If the argument consists of more than one symbol, and if the first token is
%a letter, use that letter for the computations:
    \def\gobble@till@marker##1\endmarker{}%
    \futurelet\first@char\gobble@till@marker#1\endmarker
    \ifcat\noexpand\first@char A\else
      \def\first@char{}%
    \fi
    \macc@nested@a\relax111{\first@char}%
  \fi
  \endgroup
}
\makeatother
%------------------------------------------------------------

% --------------- better spacing for center  ----------------------------------------
\newenvironment{mycenter}[1][\topsep]
  {\setlength{\topsep}{#1}\par\kern\topsep\centering}% \begin{mycenter}[<len>]
  {\par\kern\topsep}% \end{mycenter}
  % ----------------------------------------------------------------

\hyphenation{hy-per-oc-ta-he-dral}
\hyphenation{ho-mo-chro-ma-tic}
\hyphenation{he-te-ro-chro-ma-tic}
\newcommand{\proofnamefont}{\scshape}

\newtheoremstyle{break}% name
  {}%         Space above, empty = `usual value'
  {}%         Space below
  {\itshape}% Body font
  {\parindent}%         Indent amount (empty = no indent, \parindent = para indent)
  {\scshape}% Thm head font
  {.}%        Punctuation after thm head
  {\newline}% Space after thm head: \newline = linebreak
  {}%         Thm head spec
\theoremstyle{break}

\theoremstyle{plain}

\newtheorem{theorem}{Theorem}[section]

\newtheorem{lemma}[theorem]{Lemma}

\newtheorem*{umain}{Main Theorem}

\theoremstyle{definition}
\newtheorem{definition}[theorem]{Definition}
\newtheorem{remark}[theorem]{Remark}
\newtheorem{notation}[theorem]{Notation}

\newcommand{\mr}{\mathrm}
\newcommand{\mc}{\mathcal}

\newcommand{\eqpd}{\coloneqq}
\newcommand{\upp}[1]{\prescript{\filledsquare}{}{#1}}
\newcommand{\lop}[1]{\prescript{}{\filledsquare}{#1}}

\newcommand{\pint}{\mathbb{N}}
\newcommand{\integers}{\mathbb{Z}}
\newcommand{\Cp}{\mc{P}^{\circ\bullet}}
\newcommand{\Cpp}{\mc{P}^{\circ\bullet}_{2}}
\newcommand{\Cppnb}{\mc{P}^{\circ\bullet}_{2,\mathrm{nb}}}

\newcommand{\colors}{\{\circ,\bullet\}}
\newcommand{\toco}{\Sigma}

\newcommand{\nhoc}{\mathsf{PCat}^{\circ\bullet}_{\mathrm{NHO}}}
\newcommand{\cotcp}{\mathsf{PCat}^{\circ\bullet}}
\newcommand{\dwi}[1]{\lsem{#1}\rsem}

\newcommand{\tightoverset}[2]{%
  \mathop{#2}\limits^{\vbox to -.5ex{\kern-0.75ex\hbox{$#1$}\vss}}}
\newcommand{\notastightoverset}[2]{%
  \mathop{#2}\limits^{\vbox to -.0ex{\kern-0.75ex\hbox{$#1$}\vss}}}

\setlength{\unitlength}{1em}

\newcommand{\PartCrossWW}{%
  \begin{tikzpicture}[scale=0.25,baseline=0.015cm]
    \def\xdist{1}
    \def\ydist{1}
    \node [scale=0.33, circle, draw=black, fill=white] (a1) at ({0*\xdist},{0*\ydist}) {};
    \node [scale=0.33, circle, draw=black, fill=white] (a2) at ({1*\xdist},{0*\ydist}) {};
    \node [scale=0.33, circle, draw=black, fill=white] (b1) at ({0*\xdist},{1*\ydist}) {};
    \node [scale=0.33, circle, draw=black, fill=white] (b2) at ({1*\xdist},{1*\ydist}) {};
    \draw (a1) to (b2);    
    \draw (a2) to (b1);
  \end{tikzpicture}
  }

  \newcommand{\PartIdenB}{%
  \begin{tikzpicture}[scale=0.25,baseline=0.015cm]
    \def\xdist{1}
    \def\ydist{1}
    \node [scale=0.33, circle, draw=black, fill=black] (a1) at ({0*\xdist},{0*\ydist}) {};
    \node [scale=0.33, circle, draw=black, fill=black] (b1) at ({0*\xdist},{1*\ydist}) {};
    \draw (a1) to (b1);    
  \end{tikzpicture}
  }

  \newcommand{\PartIdenW}{%
  \begin{tikzpicture}[scale=0.25,baseline=0.015cm]
    \def\xdist{1}
    \def\ydist{1}
    \node [scale=0.33, circle, draw=black, fill=white] (a1) at ({0*\xdist},{0*\ydist}) {};
    \node [scale=0.33, circle, draw=black, fill=white] (b1) at ({0*\xdist},{1*\ydist}) {};
    \draw (a1) to (b1);    
  \end{tikzpicture}
  }

\newcommand{\PartIdenLoBW}{%
  \begin{tikzpicture}[scale=0.25,baseline=-0.015cm]
    \def\xdist{1}
    \def\ydist{1}
    \def\ypar{1}
    \node [scale=0.33, circle, draw=black, fill=black] (a1) at ({0*\xdist},{0*\ydist}) {};
    \node [scale=0.33, circle, draw=black, fill=white] (a2) at ({1*\xdist},{0*\ydist}) {};
    \draw (a1) -- ++ (0,{\ypar}) -| (a2);    
  \end{tikzpicture}
  }
\newcommand{\PartIdenLoWB}{%
  \begin{tikzpicture}[scale=0.25,baseline=-0.015cm]
    \def\xdist{1}
    \def\ydist{1}
    \def\ypar{1}
    \node [scale=0.33, circle, draw=black, fill=white] (a1) at ({0*\xdist},{0*\ydist}) {};
    \node [scale=0.33, circle, draw=black, fill=black] (a2) at ({1*\xdist},{0*\ydist}) {};
    \draw (a1) -- ++ (0,{\ypar}) -| (a2);    
  \end{tikzpicture}
  }

  \newcommand{\PartGlobColWWBBTensor}{%
  \begin{tikzpicture}[scale=0.25,baseline=-0.015cm]
    \def\xdist{1}
    \def\ydist{1}
    \def\ypar{1}
    \node [scale=0.33, circle, draw=black, fill=white] (a1) at ({0*\xdist},{0*\ydist}) {};
    \node [scale=0.33, circle, draw=black, fill=white] (a2) at ({1*\xdist},{0*\ydist}) {};
    \node [scale=0.33, circle, draw=black, fill=black] (a3) at ({2.25*\xdist},{0*\ydist}) {};
    \node [scale=0.33, circle, draw=black, fill=black] (a4) at ({3.25*\xdist},{0*\ydist}) {};
    \draw (a1) -- ++ (0,{\ypar}) -| (a2);
    \draw (a3) -- ++ (0,{\ypar}) -| (a4);
    \node [ scale=0.666] at ($(a2)+({0.625*\xdist},{0.325*\ydist})$) {$\otimes$};
  \end{tikzpicture}
}

\newcommand{\PartFourWBWB}{%
  \begin{tikzpicture}[scale=0.25,baseline=-0.015cm]
    \def\xdist{0.666}
    \def\ydist{1}
    \def\ypar{1}
    \node [scale=0.33, circle, draw=black, fill=white] (a1) at ({0*\xdist},{0*\ydist}) {};
    \node [scale=0.33, circle, draw=black, fill=black] (a2) at ({1*\xdist},{0*\ydist}) {};
    \node [scale=0.33, circle, draw=black, fill=white] (a3) at ({2*\xdist},{0*\ydist}) {};
    \node [scale=0.33, circle, draw=black, fill=black] (a4) at ({3*\xdist},{0*\ydist}) {};
    \draw (a1) -- ++ (0,{\ypar}) -| (a4);
    \draw (a2) -- ++ (0,{\ypar});
    \draw (a3) -- ++ (0,{\ypar});    
  \end{tikzpicture}
}

\newcommand{\PartSinglesWB}{%
  \begin{tikzpicture}[scale=0.25,baseline=-0.015cm]
    \def\xdist{1}
    \def\ydist{1}
    \def\ypar{1}
    \node [scale=0.33, circle, draw=black, fill=white] (a1) at ({0*\xdist},{0*\ydist}) {};
    \node [scale=0.33, circle, draw=black, fill=black] (a2) at ({1*\xdist},{0*\ydist}) {};
    \draw[->] (a1) -- ++ (0,{\ypar});
    \draw[->] (a2) -- ++ (0,{\ypar});
  \end{tikzpicture}
}
\newcommand{\PartSinglesWBTensor}{%
  \begin{tikzpicture}[scale=0.25,baseline=-0.015cm]
    \def\xdist{1}
    \def\ydist{1}
    \def\ypar{1}
    \node [scale=0.33, circle, draw=black, fill=white] (a1) at ({0*\xdist},{0*\ydist}) {};
    \node [scale=0.33, circle, draw=black, fill=black] (a2) at ({1.5*\xdist},{0*\ydist}) {};
    \draw[->] (a1) -- ++ (0,{\ypar});
    \draw[->] (a2) -- ++ (0,{\ypar});
    \node [scale=0.66] at ($(a1)+({0.75*\xdist},{0.325*\ydist})$) {$\otimes$};
  \end{tikzpicture}
}

\newcommand{\UCPartFour}{%
  \begin{tikzpicture}[baseline=0.0cm]
    \def\xdist{0.2cm}
    \def\ydist{0.32cm}
    \def\ypar{0.2cm}
    \coordinate (a1) at ({0*\xdist},{0*\ydist});
    \coordinate (a2) at ({1*\xdist},{0*\ydist});
    \coordinate (a3) at ({2*\xdist},{0*\ydist});
    \coordinate (a4) at ({3*\xdist},{0*\ydist});
    \draw (a1) -- ++ (0,{\ypar}) -| (a4);
    \draw (a2) -- ++ (0,{\ypar});
    \draw (a3) -- ++ (0,{\ypar});    
  \end{tikzpicture}
}

\newcommand{\UCPartSinglesTensor}{%
  \begin{tikzpicture}[baseline=0.0cm]
    \def\xdist{0.2cm}
    \def\ydist{0.32cm}
    \def\ypar{0.25cm}
    \coordinate (a1) at ({0*\xdist},{0*\ydist});
    \coordinate (a2) at ({1.75*\xdist},{0*\ydist});    
    \draw[->] (a1) -- ++ (0,{\ypar});
    \draw[->] (a2) -- ++ (0,{\ypar});
    \node [scale=0.66] at ($(a1)+({0.5*1.75*\xdist},{0.325*\ydist})$) {$\otimes$};
  \end{tikzpicture}
}

\begin{document}
\title[Non-Hyperoctahedral Categories, Part~I]{Non-Hyperoctahedral Categories \\of Two-Colored Partitions\\ Part~I: New Categories} 
\author{Alexander Mang}
\author{Moritz Weber}
\address{Saarland University, Fachbereich Mathematik, 
	66041 Saarbrücken, Germany}
\email{s9almang@stud.uni-saarland.de, weber@math.uni-sb.de}
\thanks{The second author was supported by the ERC Advanced Grant NCDFP, held
by Roland Spei\-cher, by the SFB-TRR 195, and by the DFG project \emph{Quantenautomorphismen von Graphen}. This work was part of the first author's Master's thesis.}

\date{\today}
\subjclass[2010]{05A18 (Primary),  20G42 (Secondary)}
\keywords{quantum group, unitary easy quantum group, unitary group,  half-liberation, tensor category, two-colored partition, partition of a set, category of partitions, Brauer algebra}

\begin{abstract}
Compact quantum groups can be studied by investigating their co-re\-pre\-sen\-ta\-tion ca\-te\-go\-ries in analogy to the Schur-Weyl/Tannaka-Krein  approach. For the special class of (unitary) \enquote{easy} quantum groups these categories arise from a combinatorial structure: Rows of two-colored points form the objects, partitions of two such rows the morphisms; vertical/horizontal concatenation and reflection give composition, monoidal product and involution. Of the four possible classes $\mathcal O$, $\mathcal B$, $\mathcal S$ and $\mathcal H$ of such categories (inspired respectively by the classical orthogonal, bistochastic, symmetric and hyperoctahedral groups) we treat the first three -- the non-hyperoctahedral ones. We introduce many new examples of such categories. They are defined in terms of subtle combinations of block size, coloring and non-crossing conditions.  This article is part of an effort to classify all non-hyperoctahedral categories of two-colored partitions. The article is purely combinatorial in nature; The quantum group aspects are left out.
\end{abstract}
\maketitle
\section{Introduction} 
\label{section:introduction}

In Woronowicz's approach (\cite{Wo87}, \cite{Wo88}, \cite{Wo98}), (compact) \enquote{quantum groups} are understood as certain non-commutative spaces, the formal duals of $C^*$-algebras, carrying a special Hopf algebra structure, for which a non-commutative version of Pontryagin duality can be proven. Reminiscent of the theorems of Tannaka-Krein and Schur-Weyl, a duality exists between the class of compact quantum groups and a particular class of involutive monoidal linear categories. The finite-dimensional unitary co-representations of a given compact quantum group form such a category, and, conversely, a unique maximal compact quantum group can be reconstructed from any such tensor category (\cite{Wo88}). 
\par
Banica and Speicher (\cite{BaSp09}) showed that sets of points as objects and partitions of finite sets as morphisms with vertical concatenation as composition, horizontal concatenation as monoidal product and reflection as involution provide concrete, combinatorial initial data for such representation categories. Their construction yields compact quantum subgroups of the free orthogonal quantum group $O_n^+$ introduced by Wang (\cite{Wa95}) as a non-commutative counterpart of the classical group  $O_n$ of orthogonal real-valued matrices. All examples of compact quantum groups arising in this fashion, the so-called \enquote{easy} quantum groups, have since been classified (\cite{BaSp09}, \cite{BaCuSp09a}, \cite{We12},\cite{RaWe15a}, and \cite{RaWe13}).
\par
As Freslon, Tarrago and the second author demonstrated (\cite{FrWe14}, \cite{TaWe15a}, \cite{TaWe15b}), Banica and Speicher's approach can be generalized to categories of partitions of sets of \emph{two-colored} points.  In contrast to the uncolored case, here, vertical concatenation of partitions, i.e.\ the composition of morphisms, is restricted to such partitions with matching colorings of their points. This construction yields combinatorial compact quantum subgroups of the free unitary quantum group $U_n^+$, a quantum analogue of the classical unitary group $U_n$ also introduced by Wang (\cite{Wa95}). A collective endeavour to find all such \enquote{unitary easy} quantum groups was initiated by Tarrago and the second author in \cite{TaWe15a} and has since been advanced by Gromada in \cite{Gr18} as well as by the authors in \cite{MaWe18a} and \cite{MaWe18b}.
\par
The classification of all unitary easy quantum groups has been approached from several different angles of attack.
Tarrago and the second author classified in \cite{TaWe15a} all \emph{non-crossing} categories $ \mc C\subseteq \Cp$ of two-colored partitions, i.e., $\mc C\subseteq \mc{NC}^{\circ\bullet}$, and all categories $\mc C$ of two-colored partitions with $\PartCrossWW\in \mc C$, the so-called \emph{group case}. In contrast, in \cite{Gr18} Gromada determined all categories $\mc C$ with the property of being \emph{globally colorized}, meaning $\PartGlobColWWBBTensor\in \mc C$. Lastly, the authors of the present article found all categories $\mc C$ with $\langle \emptyset\rangle\subseteq\mc C \subseteq \langle \PartCrossWW\rangle$, i.e.\ categories of \emph{neutral pair partitions}, corresponding to easy compact quantum groups $G$ with $U_n^+\supseteq G\supseteq U_n$, the \enquote{unitary half-liberations}, in \cite{MaWe18a} and \cite{MaWe18b}.  
\par
The present article is concerned with \emph{non-hy\-per\-oc\-ta\-hed\-ral} categories of two-colored partitions, i.e.\ categories $\mc C\subseteq \Cp$ with $\PartSinglesWB\in \mc C$ or $\PartFourWBWB\notin \mc C$. We define explicitly certain sets of partitions and show that each of them constitutes a non-hyperoctahedral category. See the next section for an overview.
\par
This article is part of an effort to classify all non-hyperoctahedral categories of two-colored partitions. In subsequent articles it will be shown that the categories found in the present article are  pairwise distinct and actually constitute \emph{all} possible non-hyperoctahedral categories. Furthermore, a set of generating partitions for each non-hyperoctahedral category will be determined.
\par
About \emph{hyperoctahedral} categories of two-colored partitions very little is known for the moment. Note that in the uncolored case categories $\mc C\subseteq \mc{P}$ with $\UCPartSinglesTensor\notin \mc C$ and $\UCPartFour\in \mc C$ give rise to quantum subgroups of the free hyperoctahedral quantum group~$H^+_n$ (\cite{Bi01}), hence the name. Amongst others, Laura Maaßen is currently doing research on the hyperoctahedral case.
\pagebreak
\section{Main Result}

Many new examples of non-hyperoctahedral categories of two-colored partitions are provided. Roughly, they are determined by combinations of constraints on the
\begin{enumerate}[label=(\roman*)]
\item sizes of blocks,
\item coloring of the points,
\item allowed crossings between blocks
\end{enumerate}
of their partitions.
\par
More precisely: The coloring of any two-colored partition $p\in \Cp$ induces on the set of points a measure-like structure, the \emph{color sum}~$\sigma_p$, and a metric-like one, the \emph{color distance}~$\delta_p$. Measuring the set of all points yields the \emph{total color sum}~$\toco(p)$.
\par
Let now $\mathcal S\subseteq \Cp$ be an arbitrary set of two-colored partitions and consider the following data:
\begin{enumerate}[label=(\arabic*)]
\item The set of block sizes: \[F(\mc S)\eqpd \{\, |B| \mid p\in \mc S,\, B\text{ block of } p\}.\]
\item The set of block color sums: \[V(\mc S)\eqpd  \{\,\sigma_p(B)\mid p\in \mc S,\, B\text{ block of }p\}.\]
\item The set of total color sums: \[\toco(\mc S)\eqpd \{\,\toco(p)\mid p\in \mc S\}.\]
\item The set of color distances between any two subsequent legs of the same block having the \emph{same} normalized color:
  \begin{IEEEeqnarray*}{rCl}
              L(\mc S)\eqpd \{\,\delta_p(\alpha_1,\alpha_2)&\mid& p\in \mc S, \, B\text{ block of }p,\, \alpha_1,\alpha_2\in B,\, \alpha_1\neq \alpha_2,\\
 & & ]\alpha_1,\alpha_2[_p\cap B=\emptyset,\, \sigma_p(\{\alpha_1,\alpha_2\})\neq 0\}.
\end{IEEEeqnarray*}
\begin{mycenter}
       \begin{tikzpicture}
    \def\scp{0.666}
    \def\linksize{\scp*0.075cm}
    \def\pointsize{\scp*0.25cm}
    \def\dd{\scp*0.5cm}
    \def\dx{\scp*1cm}
    \def\cx{\scp*0.3cm}
    \def\txu{5*\dx}    
    \def\txl{5*\dx}
    \def\dy{\scp*1cm}
    \def\cy{\scp*0.3cm}
    \def\ty{2*\dy}
    \tikzset{whp/.style={circle, inner sep=0pt, text width={\pointsize}, draw=black, fill=white}}
    \tikzset{blp/.style={circle, inner sep=0pt, text width={\pointsize}, draw=black, fill=black}}
    \tikzset{lk/.style={regular polygon, regular polygon sides=4, inner sep=0pt, text width={\linksize}, draw=black, fill=black}}
    \draw[dotted] ({0-\dd},{0}) -- ({\txl+\dd},{0});
    \draw[dotted] ({0-\dd},{\ty}) -- ({\txl+\dd},{\ty});
    \node [whp] (l1) at ({0+1*\dx},{0+0*\ty}) {};
    \node [whp] (l2) at ({0+4*\dx},{0+0*\ty}) {};    
    \draw (l1) --++(0,{\dy}) -| (l2) ($(l1)+(0,{\dy})$) --++({-0.5*\dx},0) ($(l2)+(0,{\dy})$) --++({0.5*\dx},0);
    \draw[densely dashed] ($(l1)+({-0.5*\dx},{\dy})$) -- ++ ({-0.75*\dx},0) ($(l2)+({0.5*\dx},{\dy})$) -- ++ ({0.75*\dx},0);
    \draw [draw=gray, pattern= north west lines, pattern color = gray] ($(l1)+({\cx},{\cy})$) -- ($(l1)+({\cx},{-\cy})$) --($(l2)+({\cx},{-\cy})$) |- cycle;
    \node at ({-\dx},{0.5*\ty}) {$p$};
    \node at ({3*\dx},{0.75*\ty}) {$B$};    
    \node [below = {0.35*\dy} of l1] {$\alpha_1$};
    \node [below = {0.35*\dy} of l2] {$\alpha_2$};    
  \end{tikzpicture}
\end{mycenter}
\item  The set of color distances between any two subsequent legs of the same block having \emph{different} normalized colors:
   \begin{IEEEeqnarray*}{rCl}
              K(\mc S)\eqpd \{\,\delta_p(\alpha_1,\alpha_2)&\mid& p\in \mc S, \, B\text{ block of }p,\, \alpha_1,\alpha_2\in B,\, \alpha_1\neq \alpha_2,\\
 & & ]\alpha_1,\alpha_2[_p\cap B=\emptyset,\, \sigma_p(\{\alpha_1,\alpha_2\})= 0\}.
\end{IEEEeqnarray*}
\begin{mycenter}
       \begin{tikzpicture}
    \def\scp{0.666}
    \def\linksize{\scp*0.075cm}
    \def\pointsize{\scp*0.25cm}
    \def\dd{\scp*0.5cm}
    \def\dx{\scp*1cm}
    \def\cx{\scp*0.3cm}
    \def\txu{5*\dx}    
    \def\txl{5*\dx}
    \def\dy{\scp*1cm}
    \def\cy{\scp*0.3cm}
    \def\ty{2*\dy}
    \tikzset{whp/.style={circle, inner sep=0pt, text width={\pointsize}, draw=black, fill=white}}
    \tikzset{blp/.style={circle, inner sep=0pt, text width={\pointsize}, draw=black, fill=black}}
    \tikzset{lk/.style={regular polygon, regular polygon sides=4, inner sep=0pt, text width={\linksize}, draw=black, fill=black}}
    \draw[dotted] ({0-\dd},{0}) -- ({\txl+\dd},{0});
    \draw[dotted] ({0-\dd},{\ty}) -- ({\txl+\dd},{\ty});
    \node [blp] (l1) at ({0+1*\dx},{0+0*\ty}) {};
    \node [whp] (l2) at ({0+4*\dx},{0+0*\ty}) {};    
    \draw (l1) --++(0,{\dy}) -| (l2) ($(l1)+(0,{\dy})$) --++({-0.5*\dx},0) ($(l2)+(0,{\dy})$) --++({0.5*\dx},0);
    \draw[densely dashed] ($(l1)+({-0.5*\dx},{\dy})$) -- ++ ({-0.75*\dx},0) ($(l2)+({0.5*\dx},{\dy})$) -- ++ ({0.75*\dx},0);
    \draw [draw=gray, pattern= north west lines, pattern color = gray] ($(l1)+({\cx},{\cy})$) -- ($(l1)+({\cx},{-\cy})$) --($(l2)+({-\cx},{-\cy})$) |- cycle;
    \node at ({-\dx},{0.5*\ty}) {$p$};
    \node at ({3*\dx},{0.75*\ty}) {$B$};    
    \node [below = {0.35*\dy} of l1] {$\alpha_1$};
    \node [below = {0.35*\dy} of l2] {$\alpha_2$};    
  \end{tikzpicture}
\end{mycenter}
\item The set of color distances between any two legs belonging to two crossing blocks
   \begin{IEEEeqnarray*}{rCl}
                          X(\mc S)\eqpd \{\,\delta_p(\alpha_1,\alpha_2)& \mid& p\in \mc S,\, B_1,B_2\text{ blocks of }p, \, B_1\text{ crosses } B_2,\\
& &  \alpha_1\in B_1,\,\alpha_2\in B_2\}.
\end{IEEEeqnarray*}
\begin{mycenter}
       \begin{tikzpicture}
    \def\scp{0.666}
    \def\linksize{\scp*0.075cm}
    \def\pointsize{\scp*0.25cm}
    \def\dd{\scp*0.5cm}
    \def\dx{\scp*1cm}
    \def\cx{\scp*0.3cm}
    \def\txu{8*\dx}    
    \def\txl{8*\dx}
    \def\dy{\scp*1cm}
    \def\cy{\scp*0.3cm}
    \def\ty{3*\dy}
    \tikzset{whp/.style={circle, inner sep=0pt, text width={\pointsize}, draw=black, fill=white}}
    \tikzset{blp/.style={circle, inner sep=0pt, text width={\pointsize}, draw=black, fill=black}}
    \tikzset{lk/.style={regular polygon, regular polygon sides=4, inner sep=0pt, text width={\linksize}, draw=black, fill=black}}
    \draw[dotted] ({0-\dd},{0}) -- ({\txl+\dd},{0});
    \draw[dotted] ({0-\dd},{\ty}) -- ({\txl+\dd},{\ty});
    \node [whp] (l1) at ({0+1*\dx},{0+0*\ty}) {};
    \node [whp] (l2) at ({0+3*\dx},{0+0*\ty}) {};
    \node [blp] (l3) at ({0+6*\dx},{0+0*\ty}) {};
    \node [whp] (l4) at ({0+8*\dx},{0+0*\ty}) {};        
    \draw (l1) --++(0,{\dy}) -| (l3) ($(l1)+(0,{\dy})$) --++({-0.5*\dx},0) ($(l3)+(0,{\dy})$) --++({0.5*\dx},0);
    \draw[densely dashed] ($(l1)+({-0.5*\dx},{\dy})$) -- ++ ({-0.75*\dx},0) ($(l3)+({0.5*\dx},{\dy})$) -- ++ ({0.75*\dx},0);
    \draw (l2) --++(0,{2*\dy}) -| (l4) ($(l2)+(0,{2*\dy})$) --++({-0.5*\dx},0) ($(l4)+(0,{2*\dy})$) --++({0.5*\dx},0);
    \draw[densely dashed] ($(l2)+({-0.5*\dx},{2*\dy})$) -- ++ ({-0.75*\dx},0) ($(l4)+({0.5*\dx},{2*\dy})$) -- ++ ({0.75*\dx},0);
    \draw [draw=gray, pattern= north west lines, pattern color = gray] ($(l2)+({\cx},{\cy})$) -- ($(l2)+({\cx},{-\cy})$) --($(l3)+({-\cx},{-\cy})$) |- cycle;
    \node at ({-\dx},{0.5*\ty}) {$p$};
    \node [above = {0.85*\dy} of l1] {$B_1$};
    \node [above = {1.85*\dy} of l4] {$B_2$};        
    \node [below = {0.35*\dy} of l2] {$\alpha_1$};
    \node [below = {0.35*\dy} of l3] {$\alpha_2$};    
  \end{tikzpicture}
\end{mycenter}
\end{enumerate}
It is a subtle question which combinations of conditions on these six quantities define categories of partitions. We clarify it and we obtain a huge variety of new categories of partitions.
\vspace{0.5em}
\begin{umain}[{Theorem~\ref{theorem:main}}] For every $u\in \{0\}\cup \pint$, $m\in \pint$, $D\subseteq \{0,\ldots,\lfloor\frac{m}{2}\rfloor\}$, $E\subseteq \{0\}\cup\pint$ and every subsemigroup $N$ of $(\pint,+)$ each parameter tuple $(f,v,s,l,k,x)$ listed below defines a non-hyperoctahedral category of two-colored partitions
  \begin{IEEEeqnarray*}{rCCCl}
    \mc R_{(f,v,s,l,k,x)}\eqpd\{p\in \Cp&\mid &F(\{p\})\subseteq f,& V(\{p\})\subseteq v,& \toco(\{p\})\subseteq s,\\ && L(\{p\})\subseteq l,& K(\{p\})\subseteq k,& X(\{p\})\subseteq x \}.
  \end{IEEEeqnarray*}\vspace{-1.5em}
                    \begin{align*}
                      \begin{matrix}
                        f&v&s& l& k& x  \\ \hline \\[-0.85em]
                        \{2\} & \pm\{0, 2\} & 2um\integers & m\integers & m\integers & \integers\\
                      \{2\} & \pm\{0, 2\} & 2um\integers & m\!+\!2m\integers & 2m\integers & \integers \\
                        \{2\} & \pm \{0, 2\} & 2um\integers & m\!+\!2m\integers & 2m\integers & \integers\backslash m\integers \\
                      \{2\} & \{0\} & \{0\} & \emptyset & m\integers & \integers\\
                      \{2\} & \pm\{0, 2\} & \{0\} & \{0\} & \{0\} &  \integers\backslash {\pm N} \\
                      \{2\} & \{0\} & \{0\} & \emptyset & \{0\} & \integers\backslash {\pm N} \\
                      \{2\} & \{0\} & \{0\} & \emptyset & \{0\} & \integers\backslash \{0\}\backslash {\pm N} \\
                         \{1,2\}&\pm\{0, 1, 2\} & um\integers & m\integers & m\integers & \integers\backslash (\pm D\!+\!m\integers)\\
                                                 \{1,2\}&\pm\{0, 1, 2\} & 2um\integers & m\!+\!2m\integers & 2m\integers & \integers\backslash (\pm D\!+\!m\integers)\\
                         \{1,2\}&\pm \{0, 1\} & um\integers & \emptyset & m\integers & \integers\backslash (\pm D\!+\!m\integers) \\
                        \{1,2\}&\pm\{0, 1, 2\} & \{0\} & \{0\} & \{0\} & \integers\backslash {\pm E} \\
                      \{1,2\}&\pm\{0,  1\} & \{0\} & \emptyset & \{0\} & \integers\backslash {\pm E}\\
                        \pint & \integers & um\integers & m\integers & m\integers & \integers\backslash (\pm D\!+\!m\integers)\\
                        \pint & \integers & \{0\} & \{0\} & \{0\} & \integers\backslash {\pm E}\\
                    \end{matrix}
                    \end{align*}
                    Here,    $\pm S\eqpd S\cup(-S)$ for any set $S\subseteq \integers$.
                  \end{umain}
                  \pagebreak
\par

\section{Basic Definitions: Partitions}
\label{section:basic-definitions}
Details on, examples and illustrations of two-colored partitions and their categories can be found in \cite{TaWe15a}. However, we quickly recall the basics. In addition, certain definitions from \cite{MaWe18a} and \cite{MaWe18b} are given here in greater generality.
\par
After revisiting the fundamental definition of two-colored partitions, specialized language is introduced to describe them, in particular the concepts of \emph{orientation},   \emph{normalized color}, \emph{color sum} and \emph{color distance}.
\subsection{Two-Colored Partitions}
  A \emph{(two-colored) partition} is specified by the following three data: 1) two disjoint (possibly empty) finite totally ordered sets $R_L$, the \emph{lower row} comprising the \emph{lower points}, and $R_U$, the \emph{upper row} comprising the \emph{upper points}, 2) an exhaustive decomposition of the union $R_L\cup R_U$, the set of \emph{points}, into disjoint subsets, the \emph{blocks}, and 3) a two-valued mapping on $R_L\cup R_U$, the \emph{coloration}, assigning to every point its \emph{color}, either $\bullet$ (\emph{black}) or $\circ$ (\emph{white}). We represent partitions pictorially as follows:
  \begin{mycenter}[1em]
     \begin{tikzpicture}
    \def\scp{0.666}
    \def\linksize{\scp*0.075cm}
    \def\pointsize{\scp*0.25cm}
    \def\dd{\scp*0.5cm}
    \def\dx{\scp*1cm}
    \def\cx{\scp*0.3cm}
    \def\txu{0*\dx}    
    \def\txl{0*\dx}
    \def\dy{\scp*1cm}
    \def\cy{\scp*0.3cm}
    \def\ty{2*\dy}
    \tikzset{whp/.style={circle, inner sep=0pt, text width={\pointsize}, draw=black, fill=white}}
    \tikzset{blp/.style={circle, inner sep=0pt, text width={\pointsize}, draw=black, fill=black}}
    \tikzset{lk/.style={regular polygon, regular polygon sides=4, inner sep=0pt, text width={\linksize}, draw=black, fill=black}}
    \draw[dotted] ({0-\dd},{0}) -- ({\txl+\dd},{0});
    \draw[dotted] ({0-\dd},{\ty}) -- ({\txl+\dd},{\ty});
    \node [whp] (l1) at ({0+0*\dx},{0+0*\ty}) {};
    \draw[->] (l1) --++(0,{\dy});
  \end{tikzpicture},\quad
    \begin{tikzpicture}
    \def\scp{0.666}
    \def\linksize{\scp*0.075cm}
    \def\pointsize{\scp*0.25cm}
    \def\dd{\scp*0.5cm}
    \def\dx{\scp*1cm}
    \def\cx{\scp*0.3cm}
    \def\txu{0*\dx}    
    \def\txl{0*\dx}
    \def\dy{\scp*1cm}
    \def\cy{\scp*0.3cm}
    \def\ty{2*\dy}
    \tikzset{whp/.style={circle, inner sep=0pt, text width={\pointsize}, draw=black, fill=white}}
    \tikzset{blp/.style={circle, inner sep=0pt, text width={\pointsize}, draw=black, fill=black}}
    \tikzset{lk/.style={regular polygon, regular polygon sides=4, inner sep=0pt, text width={\linksize}, draw=black, fill=black}}
    \draw[dotted] ({0-\dd},{0}) -- ({\txl+\dd},{0});
    \draw[dotted] ({0-\dd},{\ty}) -- ({\txl+\dd},{\ty});
    \node [whp] (l1) at ({0+0*\dx},{0+0*\ty}) {};
    \node [whp] (u1) at ({0+0*\dx},{0+1*\ty}) {};
    \draw (l1) -- (u1);
  \end{tikzpicture},\quad
    \begin{tikzpicture}
    \def\scp{0.666}
    \def\linksize{\scp*0.075cm}
    \def\pointsize{\scp*0.25cm}
    \def\dd{\scp*0.5cm}
    \def\dx{\scp*1cm}
    \def\cx{\scp*0.3cm}
    \def\txu{0*\dx}    
    \def\txl{0*\dx}
    \def\dy{\scp*1cm}
    \def\cy{\scp*0.3cm}
    \def\ty{2*\dy}
    \tikzset{whp/.style={circle, inner sep=0pt, text width={\pointsize}, draw=black, fill=white}}
    \tikzset{blp/.style={circle, inner sep=0pt, text width={\pointsize}, draw=black, fill=black}}
    \tikzset{lk/.style={regular polygon, regular polygon sides=4, inner sep=0pt, text width={\linksize}, draw=black, fill=black}}
    \draw[dotted] ({0-\dd},{0}) -- ({\txl+\dd},{0});
    \draw[dotted] ({0-\dd},{\ty}) -- ({\txl+\dd},{\ty});
    \node [blp] (l1) at ({0+0*\dx},{0+0*\ty}) {};
    \node [blp] (u1) at ({0+0*\dx},{0+1*\ty}) {};
    \draw (l1) -- (u1);
  \end{tikzpicture},\quad
      \begin{tikzpicture}
    \def\scp{0.666}
    \def\linksize{\scp*0.075cm}
    \def\pointsize{\scp*0.25cm}
    \def\dd{\scp*0.5cm}
    \def\dx{\scp*1cm}
    \def\cx{\scp*0.3cm}
    \def\txu{1*\dx}    
    \def\txl{1*\dx}
    \def\dy{\scp*1cm}
    \def\cy{\scp*0.3cm}
    \def\ty{2*\dy}
    \tikzset{whp/.style={circle, inner sep=0pt, text width={\pointsize}, draw=black, fill=white}}
    \tikzset{blp/.style={circle, inner sep=0pt, text width={\pointsize}, draw=black, fill=black}}
    \tikzset{lk/.style={regular polygon, regular polygon sides=4, inner sep=0pt, text width={\linksize}, draw=black, fill=black}}
    \draw[dotted] ({0-\dd},{0}) -- ({\txl+\dd},{0});
    \draw[dotted] ({0-\dd},{\ty}) -- ({\txl+\dd},{\ty});
    \node [whp] (l1) at ({0+0*\dx},{0+0*\ty}) {};
    \node [whp] (l2) at ({0+1*\dx},{0+0*\ty}) {};
    \draw (l1) --++ (0,{\dy}) -| (l2);
  \end{tikzpicture},\quad
      \begin{tikzpicture}
    \def\scp{0.666}
    \def\linksize{\scp*0.075cm}
    \def\pointsize{\scp*0.25cm}
    \def\dd{\scp*0.5cm}
    \def\dx{\scp*1cm}
    \def\cx{\scp*0.3cm}
    \def\txu{1*\dx}    
    \def\txl{1*\dx}
    \def\dy{\scp*1cm}
    \def\cy{\scp*0.3cm}
    \def\ty{2*\dy}
    \tikzset{whp/.style={circle, inner sep=0pt, text width={\pointsize}, draw=black, fill=white}}
    \tikzset{blp/.style={circle, inner sep=0pt, text width={\pointsize}, draw=black, fill=black}}
    \tikzset{lk/.style={regular polygon, regular polygon sides=4, inner sep=0pt, text width={\linksize}, draw=black, fill=black}}
    \draw[dotted] ({0-\dd},{0}) -- ({\txl+\dd},{0});
    \draw[dotted] ({0-\dd},{\ty}) -- ({\txl+\dd},{\ty});
    \node [whp] (l1) at ({0+0*\dx},{0+0*\ty}) {};
    \node [blp] (l2) at ({0+1*\dx},{0+0*\ty}) {};
    \draw (l1) --++ (0,{\dy}) -| (l2);
  \end{tikzpicture},\quad
    \begin{tikzpicture}
    \def\scp{0.666}
    \def\linksize{\scp*0.075cm}
    \def\pointsize{\scp*0.25cm}
    \def\dd{\scp*0.5cm}
    \def\dx{\scp*1cm}
    \def\cx{\scp*0.3cm}
    \def\txu{2*\dx}    
    \def\txl{2*\dx}
    \def\dy{\scp*1cm}
    \def\cy{\scp*0.3cm}
    \def\ty{2*\dy}
    \tikzset{whp/.style={circle, inner sep=0pt, text width={\pointsize}, draw=black, fill=white}}
    \tikzset{blp/.style={circle, inner sep=0pt, text width={\pointsize}, draw=black, fill=black}}
    \tikzset{lk/.style={regular polygon, regular polygon sides=4, inner sep=0pt, text width={\linksize}, draw=black, fill=black}}
    \draw[dotted] ({0-\dd},{0}) -- ({\txl+\dd},{0});
    \draw[dotted] ({0-\dd},{\ty}) -- ({\txl+\dd},{\ty});
    \node [blp] (u1) at ({0+0*\dx},{0+1*\ty}) {};
    \node [blp] (u2) at ({0+1*\dx},{0+1*\ty}) {};
    \node [blp] (u3) at ({0+2*\dx},{0+1*\ty}) {};
    \node [lk, yshift={-\dy}] at (u1) {};
    \node [lk, yshift={-\dy}] at (u2) {};
    \node [lk, yshift={-\dy}] at (u3) {};    
    \draw (u1) --++(0,{-\dy}) -|(u3);
    \draw (u2) --++(0,{-\dy}) -|(u3);
  \end{tikzpicture},\quad
      \begin{tikzpicture}
    \def\scp{0.666}
    \def\linksize{\scp*0.075cm}
    \def\pointsize{\scp*0.25cm}
    \def\dd{\scp*0.5cm}
    \def\dx{\scp*1cm}
    \def\cx{\scp*0.3cm}
    \def\txu{1*\dx}    
    \def\txl{1*\dx}
    \def\dy{\scp*1cm}
    \def\cy{\scp*0.3cm}
    \def\ty{2*\dy}
    \tikzset{whp/.style={circle, inner sep=0pt, text width={\pointsize}, draw=black, fill=white}}
    \tikzset{blp/.style={circle, inner sep=0pt, text width={\pointsize}, draw=black, fill=black}}
    \tikzset{lk/.style={regular polygon, regular polygon sides=4, inner sep=0pt, text width={\linksize}, draw=black, fill=black}}
    \draw[dotted] ({0-\dd},{0}) -- ({\txl+\dd},{0});
    \draw[dotted] ({0-\dd},{\ty}) -- ({\txl+\dd},{\ty});
    \node [blp] (l1) at ({0+0*\dx},{0+0*\ty}) {};
    \node [whp] (l2) at ({0+1*\dx},{0+0*\ty}) {};
    \node [whp] (u1) at ({0+0*\dx},{0+1*\ty}) {};
    \node [blp] (u2) at ({0+1*\dx},{0+1*\ty}) {};
    \draw (l2) -- (u1);    
    \draw (l1) -- (u2);
  \end{tikzpicture},
  \end{mycenter}
  \begin{mycenter}[1em]
    \begin{tikzpicture}
    \def\scp{0.666}
    \def\linksize{\scp*0.075cm}
    \def\pointsize{\scp*0.25cm}
    \def\dd{\scp*0.5cm}
    \def\dx{\scp*1cm}
    \def\cx{\scp*0.3cm}
    \def\txu{2*\dx}    
    \def\txl{3*\dx}
    \def\dy{\scp*1cm}
    \def\cy{\scp*0.3cm}
    \def\ty{3*\dy}
    \tikzset{whp/.style={circle, inner sep=0pt, text width={\pointsize}, draw=black, fill=white}}
    \tikzset{blp/.style={circle, inner sep=0pt, text width={\pointsize}, draw=black, fill=black}}
    \tikzset{lk/.style={regular polygon, regular polygon sides=4, inner sep=0pt, text width={\linksize}, draw=black, fill=black}}
    \draw[dotted] ({0-\dd},{0}) -- ({\txl+\dd},{0});
    \draw[dotted] ({0-\dd},{\ty}) -- ({\txl+\dd},{\ty});
    \node [blp] (l1) at ({0+0*\dx},{0+0*\ty}) {};
    \node [whp] (l2) at ({0+1*\dx},{0+0*\ty}) {};
    \node [whp] (l3) at ({0+2*\dx},{0+0*\ty}) {};
    \node [blp] (l4) at ({0+3*\dx},{0+0*\ty}) {};
    \node [whp] (u1) at ({0+0*\dx},{0+1*\ty}) {};
    \node [blp] (u2) at ({0+1*\dx},{0+1*\ty}) {};
    \node [whp] (u3) at ({0+2*\dx},{0+1*\ty}) {};
    \node [lk, yshift={-\dy}] at (u1) {};
    \node [lk, yshift={-\dy}] at (u2) {};
    \node [lk, yshift={-\dy}] at (u3) {};    
    \draw [->] (l1) --++ (0,{\dy});
    \draw (l2) -- (u2);
    \draw (u1) --++(0,{-\dy}) -|(u3);
    \draw (l3) --++ (0,{\dy}) -| (l4);
  \end{tikzpicture},\quad
    \begin{tikzpicture}
    \def\scp{0.666}
    \def\linksize{\scp*0.075cm}
    \def\pointsize{\scp*0.25cm}
    \def\dd{\scp*0.5cm}
    \def\dx{\scp*1cm}
    \def\cx{\scp*0.3cm}
    \def\txu{2*\dx}    
    \def\txl{3*\dx}
    \def\dy{\scp*1cm}
    \def\cy{\scp*0.3cm}
    \def\ty{3*\dy}
    \tikzset{whp/.style={circle, inner sep=0pt, text width={\pointsize}, draw=black, fill=white}}
    \tikzset{blp/.style={circle, inner sep=0pt, text width={\pointsize}, draw=black, fill=black}}
    \tikzset{lk/.style={regular polygon, regular polygon sides=4, inner sep=0pt, text width={\linksize}, draw=black, fill=black}}
    \draw[dotted] ({0-\dd},{0}) -- ({\txl+\dd},{0});
    \draw[dotted] ({0-\dd},{\ty}) -- ({\txl+\dd},{\ty});
    \node [whp] (u1) at ({0+0*\dx},{0+1*\ty}) {};
    \node [blp] (u2) at ({0+1*\dx},{0+1*\ty}) {};
    \node [whp] (u3) at ({0+2*\dx},{0+1*\ty}) {};
    \node [whp] (u4) at ({0+3*\dx},{0+1*\ty}) {};    
    \draw (u1) --++(0,{-\dy}) -| (u3);
    \draw (u2) --++(0,{-2*\dy}) -| (u4);    
  \end{tikzpicture},\quad  
   \begin{tikzpicture}
    \def\scp{0.666}
    \def\linksize{\scp*0.075cm}
    \def\pointsize{\scp*0.25cm}
    \def\dd{\scp*0.5cm}
    \def\dx{\scp*1cm}
    \def\cx{\scp*0.3cm}
    \def\txu{2*\dx}    
    \def\txl{3*\dx}
    \def\dy{\scp*1cm}
    \def\cy{\scp*0.3cm}
    \def\ty{3*\dy}
    \tikzset{whp/.style={circle, inner sep=0pt, text width={\pointsize}, draw=black, fill=white}}
    \tikzset{blp/.style={circle, inner sep=0pt, text width={\pointsize}, draw=black, fill=black}}
    \tikzset{lk/.style={regular polygon, regular polygon sides=4, inner sep=0pt, text width={\linksize}, draw=black, fill=black}}
    \draw[dotted] ({0-\dd},{0}) -- ({\txl+\dd},{0});
    \draw[dotted] ({0-\dd},{\ty}) -- ({\txl+\dd},{\ty});
    \node [blp] (l1) at ({0+0*\dx},{0+0*\ty}) {};
    \node [whp] (l2) at ({0+1*\dx},{0+0*\ty}) {};
    \node [blp] (l3) at ({0+2*\dx},{0+0*\ty}) {};
    \node [whp] (l4) at ({0+3*\dx},{0+0*\ty}) {};
    \node [lk, yshift={2*\dy}] at (l1) {};
    \node [lk, yshift={2*\dy}] at (l2) {};
    \node [lk, yshift={2*\dy}] at (l4) {};    
    \draw [->] (l3) --++ (0,{\dy});
    \draw (l1) --++ (0,{2*\dy}) -| (l2);    
    \draw (l2) --++ (0,{2*\dy}) -| (l4);
    \end{tikzpicture}
  \end{mycenter}
  \par
  Moreover, we say that the two colors $\bullet$ and $\circ$ are \emph{inverse} to each other. A point $\alpha$ which is less than another point $\beta$ with respect to the total order of that row is said to lie \emph{left} of $\beta$. And, if so, $\beta$ lies \emph{right} of $\alpha$.
  \par
  The elements of a block are called its \emph{legs}. If a block contains points from both $R_L$ and $R_U$, we speak of a \emph{through block}. If on the other hand a block is contained in one row, either $R_L$ or $R_U$, it is said to be \emph{non-through}, speaking of \emph{lower} and \emph{upper non-through blocks} respectively. 
\par
We call a block with just one leg a \emph{singleton}. Blocks with exactly two legs are \emph{pairs}.  The number of legs of a block is its \emph{size}. Likewise, the total number of points of $R_L\cup R_U$ is the \emph{size} of the partition. The number of lower points is the \emph{length} of the lower row and we adopt the same terminology  for the upper row. A partition with only pairs is called a \emph{pair partition}. 
\par
We say that the upper and the lower rows are \emph{opposite} each other. To address points in a specified row we sometimes use the rank of the point in the total order of that row, which we count up from the least element, which ranks $1$. To refer to the lower point of rank $i$, we write $\lop{i}$, and, similarly, $\upp{i}$ for the upper point of rank $i$. The points $\lop{i}$ and $\upp{i}$ are said to be \emph{counterparts} of each other. Likewise, a set of points of one given row is said to be the \emph{counterpart} of the set comprising exactly the points of the same ranks of the opposite row. A through block $B$ is called \emph{straight} if $L\cap B$ and $L\cap U$ are counterparts of each other. 
\par
The set of all two-colored partitions is denoted $\Cp$.  The \emph{empty partition} $\emptyset$ is the unique element with empty rows. We denote by $\Cpp$ the set of all pair partitions and by $\Cp_{\leq 2}$ the set of all partitions all of whose blocks have sizes one or two.
\par
Whenever we deal with multiple partitions at a time, we identify all their points via the total orderings to the extent that this is possible. For example, given $p,p'\in \Cp$ and a point $\lop{i}$ in $p$, we do not distinguish between the point $\lop{i}$ in $p$ and the point $\lop{i}$ in $p'$ (if $p'$ has at least $i$ lower points). And the same goes for subsets of points. Here is an overview of the graphical representation of partitions:
    \begin{mycenter}[1em]
    \begin{tikzpicture}
    \def\scp{0.666}
    \def\linksize{\scp*0.075cm}
    \def\pointsize{\scp*0.25cm}
    \def\dd{\scp*0.5cm}
    \def\dx{\scp*1cm}
    \def\cx{\scp*0.3cm}
    \def\txu{12*\dx}    
    \def\txl{12*\dx}
    \def\dy{\scp*1cm}
    \def\cy{\scp*0.3cm}
    \def\ty{3*\dy}
    \tikzset{whp/.style={circle, inner sep=0pt, text width={\pointsize}, draw=black, fill=white}}
    \tikzset{blp/.style={circle, inner sep=0pt, text width={\pointsize}, draw=black, fill=black}}
    \tikzset{lk/.style={regular polygon, regular polygon sides=4, inner sep=0pt, text width={\linksize}, draw=black, fill=black}}
    \draw[dotted] ({0-\dd},{0}) -- ({\txl+\dd},{0});
    \draw[dotted] ({0-\dd},{\ty}) -- ({\txl+\dd},{\ty});
    \node [whp] (l1) at ({0+0*\dx},{0+0*\ty}) {};    
    \node [whp] (l2) at ({0+1*\dx},{0+0*\ty}) {};    
    \node [whp] (l3) at ({0+2*\dx},{0+0*\ty}) {};
    \node [whp] (l4) at ({0+3*\dx},{0+0*\ty}) {};
    \node [blp] (l5) at ({0+4*\dx},{0+0*\ty}) {};
    \node [whp] (l8) at ({0+7*\dx},{0+0*\ty}) {};
    \node [whp] (l10) at ({0+9*\dx},{0+0*\ty}) {};
    \node [whp] (l11) at ({0+10*\dx},{0+0*\ty}) {};
    \node [whp] (l12) at ({0+11*\dx},{0+0*\ty}) {};
    \node [whp] (l13) at ({0+12*\dx},{0+0*\ty}) {};        
    \node [whp] (u3) at ({0+2*\dx},{0+1*\ty}) {};
    \node [blp] (u4) at ({0+3*\dx},{0+1*\ty}) {};
    \node [blp] (u5) at ({0+4*\dx},{0+1*\ty}) {};
    \node [blp] (u6) at ({0+5*\dx},{0+1*\ty}) {};
    \node [whp] (u7) at ({0+6*\dx},{0+1*\ty}) {};
    \node [whp] (u8) at ({0+7*\dx},{0+1*\ty}) {};
    \node [whp] (u9) at ({0+8*\dx},{0+1*\ty}) {};
    \node [whp] (u10) at ({0+9*\dx},{0+1*\ty}) {};
    \node [whp] (u11) at ({0+10*\dx},{0+1*\ty}) {};
    \node [whp] (u12) at ({0+11*\dx},{0+1*\ty}) {};
    \node [whp] (u13) at ({0+12*\dx},{0+1*\ty}) {};        
    \node [lk, yshift={-\dy}] at (u5) {};
    \node [lk, yshift={-\dy}] at (u7) {};
    \node [lk, yshift={-\dy}] at (u8) {};    
    \node [lk, yshift={-\dy}] at (u9) {};
    \node [lk, yshift={-\dy}] at (u11) {};        
    \node [lk, yshift={-\dy}] at (u12) {};
    \node [lk, yshift={\dy}] at (l10) {};        
    \node [lk, yshift={\dy}] at (l13) {};            
    \draw [->] (l1) --++ (0,{\dy});
    \draw (l3) -- (u3);
    \draw (l5) -- (u5);
    \draw (l8) -- (u8);
    \draw (l10) -- (u10);
    \draw (l11) -- (u11);
    \draw (l12) -- (u12);
    \draw (l13) -- (u13);    
    \draw (u4) --++(0,{-\dy}) -|(u6);
    \draw (u7) --++(0,{-\dy}) -|(u9);    
    \draw (l2) --++ (0,{\dy}) -| (l4);
    \draw (u11) --++(0,{-\dy}) -|(u12);
    \draw (l10) --++ (0,{\dy}) -| (l13);
    \node (lab1) at (-1,-1) {singleton};
    \draw[densely dotted,->, shorten >=5pt] (lab1.north) -- (l1.south);
    \node[text width=5em] (lab2) at (-1.5,1.5) {two pairs\\crossing}; 
    \draw[densely dotted,->, shorten >=5pt] (lab2.south) -- ($(l3)+(0,{\dy})$);
    \node [below = 4pt of l1] {$\scriptstyle 1$};
    \node [below = 4pt of l2] {$\scriptstyle 2$};
    \node [below = 4pt of l3] {$\scriptstyle 3$};
    \node [below = 4pt of l4] {$\scriptstyle 4$};
    \node [below = 4pt of l5] {$\scriptstyle 5$};
    \node [below = 4pt of l8] {$\scriptstyle 6$};
    \node [below = 4pt of l10] {$\scriptstyle 7$};
    \node [below = 4pt of l11] {$\scriptstyle 8$};
    \node [below = 4pt of l12] {$\scriptstyle 9$};
    \node [below = 4pt of l13] {$\scriptstyle 10$};
    \node [above = 4pt of u3] {$\scriptstyle 1$};
    \node [above = 4pt of u4] {$\scriptstyle 2$};
    \node [above = 4pt of u5] {$\scriptstyle 3$};
    \node [above = 4pt of u6] {$\scriptstyle 4$};
    \node [above = 4pt of u7] {$\scriptstyle 5$};
    \node [above = 4pt of u8] {$\scriptstyle 6$};
    \node [above = 4pt of u9] {$\scriptstyle 7$};
    \node [above = 4pt of u10] {$\scriptstyle 8$};
    \node [above = 4pt of u11] {$\scriptstyle 9$};
    \node [above = 4pt of u12] {$\scriptstyle 10$};
    \node [above = 4pt of u13] {$\scriptstyle 11$};
    \node[text width=5em] (lab3) at (4,-1.5) {four-legged\\blocks each};
    \draw[densely dotted,->, shorten >=5pt] (lab3.north) -- ($(u5)+(0,{-\dy})$);
    \draw[densely dotted,->, shorten >=5pt] (lab3.north) -- ($(u8)+(0,{-\dy})$);
    \node[text width = 5em] (lab4) at (6,3.5) {just for\\readability}; 
    \draw[densely dotted,->, shorten >=5pt] (lab4.south) -- ($(u7)+(0,{-\dy})$);
    \draw[densely dotted,->, shorten >=5pt] (lab4.south) -- ($(u9)+(0,{-\dy})$);
    \node[text width = 6em] (lab5) at (10,0) {two separate\\blocks}; 
    \draw[densely dotted,->, shorten >=2pt] (lab5.north) -- ($(l12)+(0,{0.5*\dy})$);
    \draw[densely dotted,->, shorten >=2pt] (lab5.north) -- ($(u13)+(0,{-\dy})$);        
  \end{tikzpicture}
\end{mycenter}
\vspace{-2em}
    \subsection{Orientation and Intervals}
\label{section:orientation}
While any $p\in \Cp$ already comes with total orders $\leq_L$ on its lower row $R_L$ and $\leq _U$ on its upper row $R_U$, in many situations it is more advantageous to consider a cyclic order on the entire set $R_L\cup R_U$ of its points. This \emph{orientation} is uniquely determined by the following four conditions: It concurs with $\leq_L$ on $R_L$, but with the inverse of $\leq_U$ on $R_U$, and the maximum of $\leq_U$ succeeds the maximum of $\leq_L$ and likewise the minimum of $\leq_L$ succeeds the minimum of $\leq_U$. 

  \begin{mycenter}[0.5em]
 \begin{tikzpicture}[scale=0.666]
    \def\dx{1}
    \def\ty{4}
    \def\ox{0}
    \def\oy{0}
    \def\dy{\ty / 4}
    \def\tx{7*\dx}
    \def\od{0.5}
    \draw [dotted] ({\ox-\od},{\oy+\ty}) --  ({\ox+4*\dx+\od},{\oy+\ty});
    \draw [dotted] ({\ox-\od},{\oy}) --  ({\ox+6*\dx+\od},{\oy});
    \node [scale=0.4, circle, draw=black, fill=black] (a0) at ({\ox+0*\dx},{\oy}) {};
    \node [scale=0.4, circle, draw=black, fill=white] (a1) at ({\ox+1*\dx},{\oy}) {};
    \node [scale=0.4, circle, draw=black, fill=black] (a2) at ({\ox+2*\dx},{\oy}) {};
    \node [scale=0.4, circle, draw=black, fill=white] (a3) at ({\ox+3*\dx},{\oy}) {};
    \node [scale=0.4, circle, draw=black, fill=black] (a4) at ({\ox+4*\dx},{\oy}) {};
    \node [scale=0.4, circle, draw=black, fill=black] (a5) at ({\ox+5*\dx},{\oy}) {};
    \node [scale=0.4, circle, draw=black, fill=white] (a6) at ({\ox+6*\dx},{\oy}) {};
    \node [scale=0.4, circle, draw=black, fill=black] (b0) at ({\ox+0*\dx},{\oy+\ty}) {};
    \node [scale=0.4, circle, draw=black, fill=white] (b1) at ({\ox+1*\dx},{\oy+\ty}) {};
    \node [scale=0.4, circle, draw=black, fill=black] (b2) at ({\ox+2*\dx},{\oy+\ty}) {};
    \node [scale=0.4, circle, draw=black, fill=white] (b3) at ({\ox+3*\dx},{\oy+\ty}) {};
    \node [scale=0.4, circle, draw=black, fill=black] (b4) at ({\ox+4*\dx},{\oy+\ty}) {};
    \draw [lightgray] (a0) -- ++(0,{\dy}) -| (a1);
    \draw [lightgray] (a3) -- ++(0,{\dy}) -| (a5);
    \draw [lightgray] (a4) -- ++(0,{2*\dy}) -| (a6);
    \draw [lightgray] (a2) -- (b2);
    \draw [lightgray] (b0) -- ++(0,{-2*\dy}) -| (b3);
    \draw [lightgray] (b1) -- ++(0,{-1*\dy}) -| (b4);
    	\draw[->] (b4) to [out=135, in=45] (b3);
	\draw[->] (b3) to [out=135, in=45] (b2);
	\draw[->] (b2) to [out=135, in=45] (b1);
	\draw[->] (b1) to [out=135, in=45] (b0);
	\draw[->] (b0) to [out=225, in=135] (a0);
	\draw[->] (a0) to [out=-45, in=225] (a1);
	\draw[->] (a1) to [out=-45, in=225] (a2);
	\draw[->] (a2) to [out=-45, in=225] (a3);
	\draw[->] (a3) to [out=-45, in=225] (a4);
	\draw[->] (a4) to [out=-45, in=225] (a5);
	\draw[->] (a5) to [out=-45, in=225] (a6);
	\draw[->] (a6) to [out=45, in=0] (b4);
        \node at (0.45,3.5) {$R_U$};
        \node at (0.45,0.5) {$R_L$};
        \node at (-0.5,2) {$p$};
        \draw[->] (3.5,2) arc (0:330:0.55);        
      \end{tikzpicture}
\end{mycenter}
\noindent
That means $p$ carries the counter-clockwise orientation.\pagebreak\par
    The terms \emph{successor}, \emph{predecessor} and \emph{neighbor} always refer to the cyclic order.   If necessary to avoid
con\-fu\-sion between this cyclic order and the total orders $\leq_L$ and $\leq_U$, the latter two will be referred to as the \emph{native} orderings.
\par
  With respect to the cyclic order it makes sense to speak of \emph{intervals} $]\alpha,\beta[_p$, $]\alpha,\beta]_p$, $[\alpha,\beta[_p$ and $[\alpha,\beta]_p$ for any two points $\alpha,\beta\in R_L\cup R_U$ of $p$ with $\alpha\neq \beta$. Note that intervals of any kind are only defined for distinct limits.
  \begin{mycenter}[0.25em]
    \begin{tikzpicture}[scale=0.666]
    \def\dx{1}
    \def\ty{4}
    \def\ox{0}
    \def\oy{0}
    \def\dy{\ty / 4}
    \def\tx{7*\dx}
    \def\od{0.5}
    \def\cx{0.3}
    \def\cy{0.3}    
    \draw [dotted] ({\ox-\od},{\oy+\ty}) --  ({\ox+4*\dx+\od},{\oy+\ty});
    \draw [dotted] ({\ox-\od},{\oy}) --  ({\ox+6*\dx+\od},{\oy});
    \node [scale=0.4, circle, draw=lightgray, fill=lightgray] (a0) at ({\ox+0*\dx},{\oy}) {};
    \node [scale=0.4, circle, draw=black, fill=white] (a1) at ({\ox+1*\dx},{\oy}) {};
    \node [scale=0.4, circle, draw=black, fill=black] (a2) at ({\ox+2*\dx},{\oy}) {};
    \node [scale=0.4, circle, draw=black, fill=white] (a3) at ({\ox+3*\dx},{\oy}) {};
    \node [scale=0.4, circle, draw=black, fill=black] (a4) at ({\ox+4*\dx},{\oy}) {};
    \node [scale=0.4, circle, draw=black, fill=black] (a5) at ({\ox+5*\dx},{\oy}) {};
    \node [scale=0.4, circle, draw=black, fill=white] (a6) at ({\ox+6*\dx},{\oy}) {};
    \node [scale=0.4, circle, draw=lightgray, fill=lightgray] (b0) at ({\ox+0*\dx},{\oy+\ty}) {};
    \node [scale=0.4, circle, draw=lightgray, fill=white] (b1) at ({\ox+1*\dx},{\oy+\ty}) {};
    \node [scale=0.4, circle, draw=lightgray, fill=lightgray] (b2) at ({\ox+2*\dx},{\oy+\ty}) {};
    \node [scale=0.4, circle, draw=black, fill=white] (b3) at ({\ox+3*\dx},{\oy+\ty}) {};
    \node [scale=0.4, circle, draw=black, fill=black] (b4) at ({\ox+4*\dx},{\oy+\ty}) {};
    \draw[lightgray] (a0) -- ++(0,{\dy}) -| (a1);
    \draw[lightgray] (a3) -- ++(0,{\dy}) -| (a5);
    \draw[lightgray] (a4) -- ++(0,{2*\dy}) -| (a6);
    \draw[lightgray] (a2) -- (b2);
    \draw[lightgray] (b0) -- ++(0,{-2*\dy}) -| (b3);
    \draw[lightgray] (b1) -- ++(0,{-1*\dy}) -| (b4);
    \node[yshift=-0.5cm] at (a0) {$\alpha$};
    \node[yshift=0.5cm] at (b3) {$\beta$};
    \draw[dashed] ({\ox+6*\dx+\od},{\oy-\cy})  -- ({\ox+\dx-\cx},{\oy-\cy}) -- ({\ox+\dx-\cx},{\oy+\cy}) -- ({\ox+6*\dx+\od},{\oy+\cy});
    \draw[dashed] ({\ox+4*\dx+\od},{\oy+\ty-\cy})  -- ({\ox+3*\dx-\cx},{\oy+\ty-\cy}) -- ({\ox+3*\dx-\cx},{\oy+\ty+\cy}) -- ({\ox+4*\dx+\od},{\oy+\ty+\cy});
    \node (lab) at ({\ox+\tx+1.5*\dx},{\oy+0.7*\ty}) {$\left]\alpha,\beta\right]_p$};
      \draw[densely dotted] (lab.west) to ({\ox+5*\dx},{\oy+\ty});
      \draw[densely dotted] (lab.west) to ({\ox+7*\dx},{\oy});      
    \end{tikzpicture}
    \end{mycenter}
    We call a set $S$ of points \emph{consecutive} if $S$ is empty, an interval or of the forms $\{\alpha\}$ or $(R_L\cup R_U)\backslash \{\alpha\}$ for some point $\alpha$.
    \vspace{-0.5em}
 \subsection{Ordered Tuples and Crossings}
 We can extend the notion of intervals to tuples of more than two points:
 For $n\geq 3$ pairwise distinct points $\alpha_1,\ldots,\alpha_n$ in $p\in \Cp$ we say that the tuple $(\alpha_1,\ldots,\alpha_n)$ is \emph{ordered} in $p$ if for all $i,j,k\in \pint$ with $i,j,k\leq n$ and $i<j$ the set $]\alpha_i,\alpha_j[_p$ contains $\alpha_k$ if and only if $i<k<j$. In fact, we can even talk about tuples of pairwise disjoint consecutive sets being ordered.
 \par
 We say that two distinct blocks $B$ and $B'$  of $p$ \emph{cross} each other if there exist points $\alpha,\beta,\alpha',\beta'$ in $p$ such that the tuple $(\alpha,\alpha',\beta,\beta')$ is ordered and such that $\alpha,\beta\in B$ and $\alpha',\beta'\in B'$. 
 \par
     \begin{mycenter}[0.25em]
\begin{tikzpicture}[scale=0.666]
    \def\dx{1}
    \def\ty{4}
    \def\ox{0}
    \def\oy{0}
    \def\dy{\ty / 4}
    \def\tx{7*\dx}
    \def\od{0.5}
    \def\cx{0.3}
    \def\cy{0.3}    
    \draw [dotted] ({\ox-\od},{\oy}) --  ({\ox+3*\dx+\od},{\oy});
    \node [scale=0.4, circle, draw=black, fill=white] (a0) at ({\ox+0*\dx},{\oy}) {};
    \node [scale=0.4, circle, draw=darkgray, fill=darkgray] (a1) at ({\ox+1*\dx},{\oy}) {};
    \node [scale=0.4, circle, draw=black, fill=black] (a2) at ({\ox+2*\dx},{\oy}) {};
    \node [scale=0.4, circle, draw=darkgray, fill=white] (a3) at ({\ox+3*\dx},{\oy}) {};
    \draw (a0) -- ++(0,{\dy}) -| (a2);
    \draw[gray] (a1) -- ++(0,{2*\dy}) -| (a3);
    \node [yshift=-0.5cm] at (a0) {$\alpha$};
    \node [yshift=-0.5cm] at (a1) {$\alpha'$};
    \node [yshift=-0.5cm] at (a2) {$\beta$};
    \node [yshift=-0.5cm] at (a3) {$\beta'$};
    \node [yshift=0.5cm,xshift=-0.5cm] at (a0) {$B$};
    \node [yshift=1cm,xshift=0.5cm] at (a3) {$B'$};            
\end{tikzpicture}
\end{mycenter}
If $p$ has no crossing blocks, we call $p$ a \emph{non-crossing} partition. The set of all non-crossing partitions is denoted by $\mc{NC}^{\circ\bullet}$.
    \vspace{-0.5em}
\subsection{Normalized Color and Color Sum}
Just like the cyclic order is often more convenient than the total orderings of the rows, it is useful to consider besides the original, \emph{native} coloring of the points a second one:
By the \emph{normalized color} of a point $\alpha$ in $p\in \Cp$ we mean simply its native color in case $\alpha$ is a lower point, but the inverse of its native color if $\alpha$ is an upper point.\nopagebreak
\par\nopagebreak
  We call the signed measure $\sigma_p$ on the set $P_p$ of all points of $p$ which assigns $1$ to normalized $\circ$ and $-1$ to normalized $\bullet$ the \emph{color sum} of $p$.  
  Null sets of $\sigma_p$ are also referred to as \emph{neutral}.\nopagebreak
  \par\nopagebreak
 The color sum $\toco(p)\eqpd \sigma_p(P_p)$ of the set $P_p$ of all points of $p$ is called the \emph{total color sum} of $p$.\pagebreak

\subsection{Color Distance}
Besides the color sum, a measure-like structure on the points of a partition, the coloring of the partition also induces a metric-like one: Given two points $\alpha$ and $\beta$  in $p\in \Cp$ we call
   \begin{align*}
     \delta_p(\alpha,\beta)\eqpd
     \begin{cases}
       \toco(p)&\text{if }\alpha= \beta,\\
       \sigma_p(]\alpha,\beta[_p)&\text{if }\alpha\neq \beta\text{ and } \alpha, \beta \text{ have different normalized colors},\\
       \sigma_p(]\alpha,\beta]_p)&\text{if }\alpha\neq \beta\text{ and } \alpha, \beta \text{ have the same normalized color},
     \end{cases}  
   \end{align*}
   the \emph{color distance} from $\alpha$ to $\beta$ in $p$. The map $\delta_p$ indeed has properties of a \enquote{distance}.
\begin{lemma}
  \label{lemma:color-distance}
   Let  $\alpha$, $\beta$ and $\gamma$ be points in $p\in \Cp$.
   \begin{enumerate}[wide,label=(\alph*)]
   \item\label{lemma:color-distance-1}  $\delta_p(\alpha,\alpha)\equiv 0 \mod \toco(p)$.
   \item\label{lemma:color-distance-2}  $\delta_p(\alpha,\beta)\equiv -\delta_p(\beta,\alpha)\mod \toco(p)$.
   \item\label{lemma:color-distance-3}  $\delta_p(\alpha,\gamma)\equiv \delta_p(\alpha,\beta)+\delta_p(\beta,\gamma)\mod \toco(p)$.
   \end{enumerate}
 \end{lemma}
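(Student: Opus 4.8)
The plan is to reduce all three parts to two elementary facts. The first is that for distinct points $\mu,\nu$ of $p$ the half-open arcs $]\mu,\nu]_p$ and $]\nu,\mu]_p$ form a partition of the point set $P_p$, so that $\sigma_p(]\mu,\nu]_p)+\sigma_p(]\nu,\mu]_p)=\toco(p)$. The second is a uniform formula for $\delta_p$ on distinct points which eliminates the case distinction in its definition: I would show that for distinct $\mu,\nu$
\[
  \delta_p(\mu,\nu)=\sigma_p(]\mu,\nu]_p)+\tfrac12\bigl(\sigma_p(\{\mu\})-\sigma_p(\{\nu\})\bigr).
\]
If $\mu$ and $\nu$ have the same normalized color this is exactly the definition, since then $\sigma_p(\{\mu\})=\sigma_p(\{\nu\})$; if their normalized colors differ, it follows from $\sigma_p(]\mu,\nu[_p)=\sigma_p(]\mu,\nu]_p)-\sigma_p(\{\nu\})$ together with $\sigma_p(\{\mu\})=-\sigma_p(\{\nu\})$.

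Part~\ref{lemma:color-distance-1} is then immediate from the definition $\delta_p(\alpha,\alpha)=\toco(p)$. For part~\ref{lemma:color-distance-2}, the case $\alpha=\beta$ is trivial (both sides are $\pm\toco(p)$), and for $\alpha\neq\beta$ adding the uniform formula for $\delta_p(\alpha,\beta)$ and for $\delta_p(\beta,\alpha)$ makes the correction terms cancel while the arc terms sum to $\toco(p)$, whence $\delta_p(\alpha,\beta)+\delta_p(\beta,\alpha)=\toco(p)\equiv 0$.

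For part~\ref{lemma:color-distance-3} I would first treat the cases in which two of $\alpha,\beta,\gamma$ coincide; using the value $\toco(p)$ of $\delta_p$ on a repeated argument and part~\ref{lemma:color-distance-2}, each of these collapses to the identity $\toco(p)\equiv 0$. Assume now $\alpha,\beta,\gamma$ pairwise distinct. Applying the uniform formula three times, the correction terms telescope to $\tfrac12\bigl(\sigma_p(\{\alpha\})-\sigma_p(\{\gamma\})\bigr)$, so it remains to prove
\[
  \sigma_p(]\alpha,\beta]_p)+\sigma_p(]\beta,\gamma]_p)\equiv \sigma_p(]\alpha,\gamma]_p)\mod\toco(p).
\]
Here I would use that exactly one of the tuples $(\alpha,\beta,\gamma)$ and $(\alpha,\gamma,\beta)$ is ordered in $p$. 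If $(\alpha,\beta,\gamma)$ is ordered, then $]\alpha,\gamma]_p$ is the disjoint union of $]\alpha,\beta]_p$ and $]\beta,\gamma]_p$, and equality holds on the nose. If $(\alpha,\gamma,\beta)$ is ordered, then $]\alpha,\beta]_p$ is the disjoint union of $]\alpha,\gamma]_p$ and $]\gamma,\beta]_p$, and $]\beta,\gamma]_p$ that of $]\beta,\alpha]_p$ and $]\alpha,\gamma]_p$, while $]\gamma,\beta]_p$, $]\beta,\alpha]_p$, $]\alpha,\gamma]_p$ partition $P_p$; adding the two measures then gives $\toco(p)+\sigma_p(]\alpha,\gamma]_p)$, congruent to $\sigma_p(]\alpha,\gamma]_p)$ modulo $\toco(p)$.

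The only genuine work is the bookkeeping with cyclic intervals: justifying the claimed disjoint decompositions of the half-open arcs directly from the definition of an ordered tuple, and the observation that one of the two cyclic orderings of three distinct points must occur. I expect this to be the main, though still elementary, obstacle; granting those decompositions, parts~\ref{lemma:color-distance-1}--\ref{lemma:color-distance-3} follow by the algebra indicated above.
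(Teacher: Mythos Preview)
Your proof is correct and follows essentially the same approach as the paper: both establish the uniform formula $\delta_p(\mu,\nu)=\sigma_p(]\mu,\nu]_p)+\tfrac12(\sigma_p(\{\mu\})-\sigma_p(\{\nu\}))$ and then reduce (b) and (c) to the congruences $\sigma_p(]\alpha,\beta]_p)\equiv -\sigma_p(]\beta,\alpha]_p)$ and $\sigma_p(]\alpha,\beta]_p)+\sigma_p(]\beta,\gamma]_p)\equiv\sigma_p(]\alpha,\gamma]_p)\mod\toco(p)$. The paper simply asserts these interval identities without the case split on the cyclic ordering that you carry out explicitly, but the argument is otherwise identical.
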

 \begin{proof}
   \begin{enumerate}[wide,label=(\alph*)]
   \item The first claim is part of the definition of $\delta_p$.
   \item We can assume $\alpha\neq \beta$. Rewrite the definition of $\delta_p$ as
\[\delta_p(\alpha,\beta)=\sigma_p(]\alpha,\beta]_p)+{\textstyle\frac{1}{2}}\left(\sigma_p(\{\alpha\})-\sigma_p(\{\beta\})\right).\]
Using $\sigma_p(]\alpha,\beta]_p)\equiv -\sigma_p(]\beta,\alpha]_p)\mod \toco(p)$ now proves the claim.
\item Again, we can suppose $\alpha,\beta$ and $\gamma$ are pairwise different. Otherwise Parts~\ref{lemma:color-distance-1} and~\ref{lemma:color-distance-2} already prove the claim. Now compute, employing the formula for $\delta_p$ from the proof of Claim~\ref{lemma:color-distance-2},
  \begin{align*} \delta_p(\alpha,\beta)+\delta_p(\beta,\gamma)&=\sigma_p(]\alpha,\beta]_p)+\sigma_p(]\beta,\gamma]_p)\\ & \phantom{{}={}}+{\textstyle\frac{1}{2}}\left(\sigma_p(\{\alpha\})-\sigma_p(\{\beta\})\right)+{\textstyle\frac{1}{2}}\left(\sigma_p(\{\beta\})-\sigma_p(\{\gamma\})\right).
  \end{align*}
  From $\sigma_p(]\alpha,\beta]_p)+\sigma_p(]\beta,\gamma]_p)\equiv \sigma_p(]\alpha,\gamma]_p)\mod \toco(p)$ now follows the claim.\qedhere
   \end{enumerate}
 \end{proof}
%\vspace{-2em}
 \section{Basic Definitions: Categories of Partitions}
 The definition of two-colored partitions recalled, we recapitulate the definitions of \emph{operations} for partitions and of \emph{categories}. Again, see \cite{TaWe15a} for more.
\subsection{Fundamental Operations on Partitions} 
  For all $p,p'\in \Cp$ we call the partition which is obtained by appending the rows of $p'$ (left to right) to the right of the respective rows of $p$ the \emph{tensor product} $p\otimes p'$ of $(p,p')$. Especially, we can write tensor powers like $p^{\otimes n}$ given by $p\otimes \ldots\otimes p$ with $n$ factors. And we define $p^{\otimes 0}\eqpd\emptyset$.
    \begin{mycenter}[1em]
  \begin{tikzpicture}[scale=0.6,baseline=1.4cm]
    \def\dx{1}
    \def\ty{4}
    \def\ox{0}
    \def\oy{0}
    \def\dy{\ty / 4}
    \def\tx{1*\dx}
    \def\od{0.5}
    \def\cx{0.3}
    \def\cy{0.3}    
    \draw [dotted] ({\ox-\od},{\oy}) --  ({\ox+1*\dx+\od},{\oy});
    \draw [dotted] ({\ox-\od},{\oy+\ty}) --  ({\ox+1*\dx+\od},{\oy+\ty});    
    \node [scale=0.4, circle, draw=black, fill=black] (a0) at ({\ox+0*\dx},{\oy}) {};
    \node [scale=0.4, circle, draw=black, fill=white] (a1) at ({\ox+1*\dx},{\oy}) {};
    \draw[black] (a0) -- ++(0,{\dy}) -| (a1);
    \node at ({0.5*\tx},{-\dy}) {$p_1$};
  \end{tikzpicture}
  $\otimes$
        \begin{tikzpicture}[scale=0.6,baseline=1.4cm]
    \def\dx{1}
    \def\ty{4}
    \def\ox{0}
    \def\oy{0}
    \def\dy{\ty / 4}
    \def\tx{4*\dx}
    \def\od{0.5}
    \def\cx{0.3}
    \def\cy{0.3}    
    \draw [dotted] ({\ox-\od},{\oy+\ty}) --  ({\ox+4*\dx+\od},{\oy+\ty});
    \draw [dotted] ({\ox-\od},{\oy}) --  ({\ox+4*\dx+\od},{\oy});
    \node [scale=0.4, circle, draw=black, fill=white] (a2) at ({\ox+2*\dx},{\oy}) {};
    \node [scale=0.4, circle, draw=black, fill=black] (b0) at ({\ox+0*\dx},{\oy+\ty}) {};
    \node [scale=0.4, circle, draw=black, fill=white] (b1) at ({\ox+1*\dx},{\oy+\ty}) {};
    \node [scale=0.4, circle, draw=black, fill=black] (b2) at ({\ox+2*\dx},{\oy+\ty}) {};
    \node [scale=0.4, circle, draw=black, fill=white] (b3) at ({\ox+3*\dx},{\oy+\ty}) {};
    \node [scale=0.4, circle, draw=black, fill=black] (b4) at ({\ox+4*\dx},{\oy+\ty}) {};
    \draw[black] (a2) -- (b2);
    \draw[black] (b0) -- ++(0,{-2*\dy}) -| (b3);
    \draw[black] (b1) -- ++(0,{-1*\dy}) -| (b4);
    \node at ({0.5*\tx},{-\dy}) {$p_2$};    
  \end{tikzpicture}
  $\otimes$
        \begin{tikzpicture}[scale=0.6,baseline=1.4cm]
    \def\dx{1}
    \def\ty{4}
    \def\ox{0}
    \def\oy{0}
    \def\dy{\ty / 4}
    \def\tx{3*\dx}
    \def\od{0.5}
    \def\cx{0.3}
    \def\cy{0.3}    
    \draw [dotted] ({\ox-\od},{\oy+\ty}) --  ({\ox+3*\dx+\od},{\oy+\ty});
    \draw [dotted] ({\ox-\od},{\oy}) --  ({\ox+3*\dx+\od},{\oy});
    \node [scale=0.4, circle, draw=black, fill=white] (a3) at ({\ox+0*\dx},{\oy}) {};
    \node [scale=0.4, circle, draw=black, fill=black] (a4) at ({\ox+1*\dx},{\oy}) {};
    \node [scale=0.4, circle, draw=black, fill=black] (a5) at ({\ox+2*\dx},{\oy}) {};
    \node [scale=0.4, circle, draw=black, fill=white] (a6) at ({\ox+3*\dx},{\oy}) {};
    \draw[black] (a3) -- ++(0,{\dy}) -| (a5);
    \draw[black] (a4) -- ++(0,{2*\dy}) -| (a6);
        \node at ({0.5*\tx},{-\dy}) {$p_3$};
  \end{tikzpicture}
  =
    \begin{tikzpicture}[scale=0.6,baseline=1.4cm]
    \def\dx{1}
    \def\ty{4}
    \def\ox{0}
    \def\oy{0}
    \def\dy{\ty / 4}
    \def\tx{6*\dx}
    \def\od{0.5}
    \def\cx{0.3}
    \def\cy{0.3}    
    \draw [dotted] ({\ox-\od},{\oy+\ty}) --  ({\ox+4*\dx+\od},{\oy+\ty});
    \draw [dotted] ({\ox-\od},{\oy}) --  ({\ox+6*\dx+\od},{\oy});
    \node [scale=0.4, circle, draw=black, fill=black] (a0) at ({\ox+0*\dx},{\oy}) {};
    \node [scale=0.4, circle, draw=black, fill=white] (a1) at ({\ox+1*\dx},{\oy}) {};
    \node [scale=0.4, circle, draw=black, fill=black] (a2) at ({\ox+2*\dx},{\oy}) {};
    \node [scale=0.4, circle, draw=black, fill=white] (a3) at ({\ox+3*\dx},{\oy}) {};
    \node [scale=0.4, circle, draw=black, fill=black] (a4) at ({\ox+4*\dx},{\oy}) {};
    \node [scale=0.4, circle, draw=black, fill=black] (a5) at ({\ox+5*\dx},{\oy}) {};
    \node [scale=0.4, circle, draw=black, fill=white] (a6) at ({\ox+6*\dx},{\oy}) {};
    \node [scale=0.4, circle, draw=black, fill=black] (b0) at ({\ox+0*\dx},{\oy+\ty}) {};
    \node [scale=0.4, circle, draw=black, fill=white] (b1) at ({\ox+1*\dx},{\oy+\ty}) {};
    \node [scale=0.4, circle, draw=black, fill=black] (b2) at ({\ox+2*\dx},{\oy+\ty}) {};
    \node [scale=0.4, circle, draw=black, fill=white] (b3) at ({\ox+3*\dx},{\oy+\ty}) {};
    \node [scale=0.4, circle, draw=black, fill=black] (b4) at ({\ox+4*\dx},{\oy+\ty}) {};
    \draw[black] (a0) -- ++(0,{\dy}) -| (a1);
    \draw[black] (a3) -- ++(0,{\dy}) -| (a5);
    \draw[black] (a4) -- ++(0,{2*\dy}) -| (a6);
    \draw[black] (a2) -- (b2);
    \draw[black] (b0) -- ++(0,{-2*\dy}) -| (b3);
    \draw[black] (b1) -- ++(0,{-1*\dy}) -| (b4);
        \node at ({0.5*\tx},{-\dy}) {$p_1\otimes p_2\otimes p_3$};
  \end{tikzpicture}
\end{mycenter}
\par
The partition which is obtained from $p$ by switching the roles of upper and lower row is called the \emph{involution} $p^*$ of $p$.
  \begin{mycenter}[1em]
        \begin{tikzpicture}[baseline=0.666*1.5cm]
    \def\scp{0.666}
    \def\linksize{\scp*0.075cm}
    \def\pointsize{\scp*0.25cm}
    \def\dd{\scp*0.5cm}
    \def\dx{\scp*1cm}
    \def\cx{\scp*0.3cm}
    \def\txu{2*\dx}    
    \def\txl{3*\dx}
    \def\dy{\scp*1cm}
    \def\cy{\scp*0.3cm}
    \def\ty{3*\dy}
    \tikzset{whp/.style={circle, inner sep=0pt, text width={\pointsize}, draw=black, fill=white}}
    \tikzset{blp/.style={circle, inner sep=0pt, text width={\pointsize}, draw=black, fill=black}}
    \tikzset{lk/.style={regular polygon, regular polygon sides=4, inner sep=0pt, text width={\linksize}, draw=black, fill=black}}
    \draw[dotted] ({0-\dd},{0}) -- ({\txl+\dd},{0});
    \draw[dotted] ({0-\dd},{\ty}) -- ({\txl+\dd},{\ty});
    \node [blp] (l1) at ({0+0*\dx},{0+0*\ty}) {};
    \node [whp] (l2) at ({0+1*\dx},{0+0*\ty}) {};
    \node [whp] (l3) at ({0+2*\dx},{0+0*\ty}) {};
    \node [blp] (l4) at ({0+3*\dx},{0+0*\ty}) {};
    \node [whp] (u1) at ({0+0*\dx},{0+1*\ty}) {};
    \node [blp] (u2) at ({0+1*\dx},{0+1*\ty}) {};
    \node [whp] (u3) at ({0+2*\dx},{0+1*\ty}) {};
    \node [lk, yshift={-\dy}] at (u1) {};
    \node [lk, yshift={-\dy}] at (u2) {};
    \node [lk, yshift={-\dy}] at (u3) {};    
    \draw [->] (l1) --++ (0,{\dy});
    \draw (l2) -- (u2);
    \draw (u1) --++(0,{-\dy}) -|(u3);
    \draw (l3) --++ (0,{\dy}) -| (l4);
    \node at ({-\dx},{0.5*\ty}) {$p$};
  \end{tikzpicture}
  \quad$\mapsto$\quad
          \begin{tikzpicture}[scale=1,yscale=-1, baseline=-0.666*1.5cm]
    \def\scp{0.666}
    \def\linksize{\scp*0.075cm}
    \def\pointsize{\scp*0.25cm}
    \def\dd{\scp*0.5cm}
    \def\dx{\scp*1cm}
    \def\cx{\scp*0.3cm}
    \def\txu{2*\dx}    
    \def\txl{3*\dx}
    \def\dy{\scp*1cm}
    \def\cy{\scp*0.3cm}
    \def\ty{3*\dy}
    \tikzset{whp/.style={circle, inner sep=0pt, text width={\pointsize}, draw=black, fill=white}}
    \tikzset{blp/.style={circle, inner sep=0pt, text width={\pointsize}, draw=black, fill=black}}
    \tikzset{lk/.style={regular polygon, regular polygon sides=4, inner sep=0pt, text width={\linksize}, draw=black, fill=black}}
    \draw[dotted] ({0-\dd},{0}) -- ({\txl+\dd},{0});
    \draw[dotted] ({0-\dd},{\ty}) -- ({\txl+\dd},{\ty});
    \node [blp] (l1) at ({0+0*\dx},{0+0*\ty}) {};
    \node [whp] (l2) at ({0+1*\dx},{0+0*\ty}) {};
    \node [whp] (l3) at ({0+2*\dx},{0+0*\ty}) {};
    \node [blp] (l4) at ({0+3*\dx},{0+0*\ty}) {};
    \node [whp] (u1) at ({0+0*\dx},{0+1*\ty}) {};
    \node [blp] (u2) at ({0+1*\dx},{0+1*\ty}) {};
    \node [whp] (u3) at ({0+2*\dx},{0+1*\ty}) {};
    \node [lk, yshift={\dy}] at (u1) {};
    \node [lk, yshift={\dy}] at (u2) {};
    \node [lk, yshift={\dy}] at (u3) {};    
    \draw [->] (l1) --++ (0,{\dy});
    \draw (l2) -- (u2);
    \draw (u1) --++(0,{-\dy}) -|(u3);
    \draw (l3) --++ (0,{\dy}) -| (l4);
    \node at ({\txl+\dx},{0.5*\ty}) {$p^\ast$};    
  \end{tikzpicture}
  \end{mycenter}
  \par
  We say that the pairing $(p,p')$ is \emph{composable} if the upper row of $p$ and the lower row of $p'$ agree in size and coloration if compared according to their total orders both left to right.
  \par
  For all $x\in \{p,p'\}$ let $R_{x,U}$ denote the upper and $R_{x,L}$ the lower row of $x$. If $(p,p')$ is composable, then the \emph{composition} $pp'$ of $(p,p')$ is defined as follows: Its lower row is given by $R_{p,L}$ and its upper row by $R_{p',U}$. The blocks of $p$ contained in $R_{p,L}$ and those of $p'$ contained in $R_{p',U}$ are also blocks of $pp'$.  Once we identify the points of $R_{p,U}$ and $R_{p',L}$ we can form the join $s$ of the partitions induced there by $p$ and $p'$. For every block $D$ of $s$ the set
    \begin{align*}
       \left(R_{p,L}\cap\bigcup_{\substack{B \text{ block of } p\\B\cap D\neq \emptyset }}B\right)\cup\left( R_{p',U}\cap\bigcup_{\substack{B' \text{ block of } p'\\B'\cap D\neq \emptyset }}B'\right)
    \end{align*}
    is a block of $pp'$ unless it is empty.
    \begin{mycenter}[1em]
          \begin{tikzpicture}[baseline=3.5cm*0.666]
    \def\scp{0.666}
    \def\linksize{\scp*0.075cm}
    \def\pointsize{\scp*0.25cm}
    \def\dd{\scp*0.5cm}
    \def\dx{\scp*1cm}
    \def\cx{\scp*0.3cm}
    \def\txu{7*\dx}    
    \def\txl{7*\dx}
    \def\dy{\scp*1cm}
    \def\cy{\scp*0.3cm}
    \def\tyl{3*\dy}
    \def\tyu{4*\dy}    
    \tikzset{whp/.style={circle, inner sep=0pt, text width={\pointsize}, draw=black, fill=white}}
    \tikzset{blp/.style={circle, inner sep=0pt, text width={\pointsize}, draw=black, fill=black}}
    \tikzset{lk/.style={regular polygon, regular polygon sides=4, inner sep=0pt, text width={\linksize}, draw=black, fill=black}}
    \draw[dotted] ({0-\dd},{0}) -- ({\txl+\dd},{0});
    \draw[dotted] ({0-\dd},{\tyl}) -- ({\txl+\dd},{\tyl});
    \draw[dotted] ({0-\dd},{\tyl+\tyu}) -- ({\txl+\dd},{\tyl+\tyu});    
    \node [whp] (b1) at ({0+0*\dx},{0+0*\tyl}) {};
    \node [whp] (b2) at ({0+1*\dx},{0+0*\tyl}) {};
    \node [blp] (b3) at ({0+2*\dx},{0+0*\tyl}) {};
    \node [whp] (b4) at ({0+3*\dx},{0+0*\tyl}) {};
    \node [blp] (b5) at ({0+4*\dx},{0+0*\tyl}) {};
    \node [whp] (b6) at ({0+5*\dx},{0+0*\tyl}) {};    
    \node [blp] (m1) at ({0+0*\dx},{0+1*\tyl}) {};
    \node [whp] (m2) at ({0+1*\dx},{0+1*\tyl}) {};
    \node [whp] (m3) at ({0+2*\dx},{0+1*\tyl}) {};
    \node [blp] (m4) at ({0+3*\dx},{0+1*\tyl}) {};
    \node [whp] (m5) at ({0+4*\dx},{0+1*\tyl}) {};
    \node [blp] (m6) at ({0+5*\dx},{0+1*\tyl}) {};
    \node [blp] (m7) at ({0+6*\dx},{0+1*\tyl}) {};
    \node [whp] (m8) at ({0+7*\dx},{0+1*\tyl}) {};
    \node [blp] (t1) at ({0+0*\dx},{0+1*\tyl+1*\tyu}) {};
    \node [whp] (t2) at ({0+1*\dx},{0+1*\tyl+1*\tyu}) {};
    \node [whp] (t3) at ({0+2*\dx},{0+1*\tyl+1*\tyu}) {};
    \node [whp] (t4) at ({0+3*\dx},{0+1*\tyl+1*\tyu}) {};
    \node [blp] (t5) at ({0+4*\dx},{0+1*\tyl+1*\tyu}) {};
    \node [lk, yshift={-\dy}] at (m1) {};
    \node [lk, yshift={-\dy}] at (m3) {};
    \node [lk, yshift={-\dy}] at (m4) {};
    \node [lk, yshift={-\dy}] at (t1) {};
    \node [lk, yshift={-\dy}] at (t2) {};
    \node [lk, yshift={-\dy}] at (t3) {};        
    \draw [->] (b4) --++ (0,{\dy});
    \draw [->] (m1) --++ (0,{\dy});
    \draw [->] (t4) --++ (0,{-\dy});        
    \draw (b2) -- (m2);
    \draw (b5) -- (m5);
    \draw (b6) -- (m6);
    \draw (m5) -- (t5);    
    \draw (b1) --++(0,{\dy}) -| (b3);
    \draw (m1) --++(0,{-\dy}) -| (m3);
    \draw (m3) --++(0,{-\dy}) -| (m4);
    \draw (m7) --++(0,{-\dy}) -| (m8);        
    \draw (m2) --++(0,{\dy}) -| (m4);
    \draw (m3) --++(0,{2*\dy}) -| (m6);
    \draw (m7) --++(0,{\dy}) -| (m8);
    \draw (t1) --++(0,{-\dy}) -| (t2);
    \draw (t2) --++(0,{-\dy}) -| (t3);
    \node at ({-\dx},{\tyl+0.5*\tyu}) {$p'$};
    \node at ({-\dx},{0.5*\tyl}) {$p$};    
  \end{tikzpicture}
  $=$
            \begin{tikzpicture}[baseline=2cm*0.666]
    \def\scp{0.666}
    \def\linksize{\scp*0.075cm}
    \def\pointsize{\scp*0.25cm}
    \def\dd{\scp*0.5cm}
    \def\dx{\scp*1cm}
    \def\cx{\scp*0.3cm}
    \def\txu{5*\dx}    
    \def\txl{5*\dx}
    \def\dy{\scp*1cm}
    \def\cy{\scp*0.3cm}
    \def\ty{4*\dy}
    \tikzset{whp/.style={circle, inner sep=0pt, text width={\pointsize}, draw=black, fill=white}}
    \tikzset{blp/.style={circle, inner sep=0pt, text width={\pointsize}, draw=black, fill=black}}
    \tikzset{lk/.style={regular polygon, regular polygon sides=4, inner sep=0pt, text width={\linksize}, draw=black, fill=black}}
    \draw[dotted] ({0-\dd},{0}) -- ({\txl+\dd},{0});
    \draw[dotted] ({0-\dd},{\ty}) -- ({\txl+\dd},{\ty});
    \node [whp] (b1) at ({0+0*\dx},{0+0*\ty}) {};
    \node [whp] (b2) at ({0+1*\dx},{0+0*\ty}) {};
    \node [blp] (b3) at ({0+2*\dx},{0+0*\ty}) {};
    \node [whp] (b4) at ({0+3*\dx},{0+0*\ty}) {};
    \node [blp] (b5) at ({0+4*\dx},{0+0*\ty}) {};
    \node [whp] (b6) at ({0+5*\dx},{0+0*\ty}) {};    
    \node [blp] (m1) at ({0+0*\dx},{0+1*\ty}) {};
    \node [whp] (m2) at ({0+1*\dx},{0+1*\ty}) {};
    \node [whp] (m3) at ({0+2*\dx},{0+1*\ty}) {};
    \node [blp] (m4) at ({0+3*\dx},{0+1*\ty}) {};
    \node [whp] (m5) at ({0+4*\dx},{0+1*\ty}) {};
    \node [lk, yshift={-\dy}] at (m1) {};
    \node [lk, yshift={-\dy}] at (m2) {};
    \node [lk, yshift={-\dy}] at (m3) {};
    \draw [->] (b4) --++ (0,{\dy});
    \draw [->] (m4) --++ (0,{-\dy});        
    \draw (b5) -- (m5);
    \draw (b1) --++(0,{\dy}) -| (b3);
    \draw (b2) --++(0,{2*\dy}) -| (b6);
    \draw (m1) --++(0,{-\dy}) -| (m2);
    \draw (m2) --++(0,{-\dy}) -| (m3);
    \node at ({\txl+\dx},{0.5*\ty}) {$pp'$};
  \end{tikzpicture}
  \end{mycenter}
\vspace{-0.5em}
\subsection{Categories of Two-Colored Partitions}
We call a set $\mc C\subseteq \Cp$ a \emph{category} if it contains the partitions $\emptyset$, $\PartIdenW$, $\PartIdenB$, $\PartIdenLoBW$ and $\PartIdenLoWB$ and is closed under tensor products, involution and composition of composable pairings (\cite[Section~1.3]{TaWe15a}, building on \cite[Definition~2.2]{BaSp09}). The set of all categories of two-colored partitions is denoted by $\cotcp$.
\par
For every set $\mc G\subseteq \Cp$ we  write $\langle \mc G\rangle$ for the smallest category (with respect to $\subseteq$) which contains $\mc G$. We say that $\mc G$ \emph{generates} $\langle \mc G\rangle$. If $\mc G=\{p\}$ for some $p\in \Cp$, we slightly abuse notation by writing $\langle p\rangle$ instead of $\langle \{p\}\rangle$ for $\langle \mc G\rangle$. Also, we mix the two, writing, $\langle \mc G,q\rangle$ for $q\in \Cp$ instead of $\langle \mc G\cup \{p\}\rangle$.\pagebreak

\begin{definition}{{\normalfont \cite[Definition~2.2]{TaWe15a}}}
  \label{definition:cases}
  Let $\mc C\subseteq \Cp$  be a category. We say  $\mc C$ is
  \begin{enumerate}[label=(\alph*)]
  \item \ldots \emph{case $\mc O$} if $\PartSinglesWBTensor\notin \mc C$ and $\PartFourWBWB\notin \mc C$,
  \item \ldots \emph{case $\mc B$} if $\PartSinglesWBTensor\in \mc C$ and $\PartFourWBWB\notin \mc C$,
      \item \ldots \emph{case $\mc S$} if $\PartSinglesWBTensor\in \mc C$ and $\PartFourWBWB\in \mc C$,
  \item \ldots \emph{case $\mc H$} if $\PartSinglesWBTensor\notin \mc C$ and $\PartFourWBWB\in \mc C$.
  \end{enumerate}
\end{definition}
In this series of articles, we are only interested in the first three cases; Compare with the classification in the uncolored case, where also the corresponding Case~$\mc H$, i.e.\ $\UCPartSinglesTensor\notin \mc C$ or $\UCPartFour\in\mc C$ for categories $\mc C\subseteq \mc{P}$, is  most complex (\cite{RaWe15a} and \cite{RaWe13}).
\begin{definition}
   Let $\mc C\subseteq \Cp$ be a category.
   \begin{enumerate}[label=(\alph*)]
     \label{definition:non-hyperoctahedral}
  \item\label{definition:non-hyperoctahedral-1}  Equivalently to saying $\mc C$ is case~$\mc H$, we also call $\mc C$ \emph{hy\-per\-oc\-ta\-he\-dral}.
  \item\label{definition:non-hyperoctahedral-2} Hence, if instead $\mc C$ is case $\mc O$, $\mc B$ or $\mc S$, we say that $\mc C$ is \emph{non-hy\-per\-oc\-ta\-he\-dral}.
    \item\label{definition:non-hyperoctahedral-3} The set of all non-hyperoctahedral categories is denoted by $\nhoc$.
  \end{enumerate}
\end{definition}

\subsection{Composite Category Operations}
The basic category operations can be utilized to construct further generic transformations.
\par
Categories are closed under four \emph{basic rotations}: We obtain the partition $p^\rcurvearrowdown$ from $p\in \Cp$ by removing the leftmost upper point $\alpha$ and adding a new point $\beta$ to the left of the leftmost lower point, assigning to $\beta$ the inverse color of $\alpha$ and also replacing $\alpha$ by $\beta$ as far as the blocks are concerned. We say that $\alpha$ is \emph{rotated down}. Similarly, we can rotate down the rightmost upper point by inverting its color and appending it to the lower row, producing $p^\lcurvearrowdown$. And the rotations $p^\lcurvearrowup$ and $p^\rcurvearrowup$ result form reversing these procedures and \emph{rotating up} the leftmost respectively rightmost point. We call $p^{\circlearrowright}\eqpd (p^\lcurvearrowup)^{\lcurvearrowdown}$  the \emph{clockwise} and  $p^{\circlearrowleft}\eqpd (p^\rcurvearrowdown)^{\rcurvearrowup}$ the \emph{counter-clockwise cyclic rotation} of $p$.
    \begin{mycenter}[0.5em]
                  \begin{tikzpicture}[baseline=0.666*1.5cm]
    \def\scp{0.666}
    \def\linksize{\scp*0.075cm}
    \def\pointsize{\scp*0.25cm}
    \def\dd{\scp*0.5cm}
    \def\dx{\scp*1cm}
    \def\cx{\scp*0.3cm}
    \def\txu{2*\dx}    
    \def\txl{3*\dx}
    \def\dy{\scp*1cm}
    \def\cy{\scp*0.3cm}
    \def\ty{3*\dy}
    \tikzset{whp/.style={circle, inner sep=0pt, text width={\pointsize}, draw=black, fill=white}}
    \tikzset{blp/.style={circle, inner sep=0pt, text width={\pointsize}, draw=black, fill=black}}
    \tikzset{lk/.style={regular polygon, regular polygon sides=4, inner sep=0pt, text width={\linksize}, draw=black, fill=black}}
    \draw[dotted] ({0-\dd},{0}) -- ({\txl+\dd},{0});
    \draw[dotted] ({0-\dd},{\ty}) -- ({\txl+\dd},{\ty});
    \node [blp] (l1) at ({0+0*\dx},{0+0*\ty}) {};
    \node [whp] (l2) at ({0+1*\dx},{0+0*\ty}) {};
    \node [blp] (l3) at ({0+2*\dx},{0+0*\ty}) {};
    \node [blp] (l4) at ({0+3*\dx},{0+0*\ty}) {};
    \node [whp] (u1) at ({0+0*\dx},{0+1*\ty}) {};
    \node [blp] (u2) at ({0+1*\dx},{0+1*\ty}) {};
    \node [whp] (u3) at ({0+2*\dx},{0+1*\ty}) {};
    \node [lk, yshift={-\dy}] at (u1) {};
    \node [lk, yshift={-\dy}] at (u2) {};
    \node [lk, yshift={-\dy}] at (u3) {};    
    \draw [->] (l1) --++ (0,{\dy});
    \draw (l2) -- (u2);
    \draw (u1) --++(0,{-\dy}) -|(u3);
    \draw (l3) --++ (0,{\dy}) -| (l4);
    \node at ({-\dx},{0.5*\ty}) {$p$};
  \end{tikzpicture}\quad$\mapsto$\quad
                  \begin{tikzpicture}[baseline=0.666*1.5cm]
    \def\scp{0.666}
    \def\linksize{\scp*0.075cm}
    \def\pointsize{\scp*0.25cm}
    \def\dd{\scp*0.5cm}
    \def\dx{\scp*1cm}
    \def\cx{\scp*0.3cm}
    \def\txu{2*\dx}    
    \def\txl{3*\dx}
    \def\dy{\scp*1cm}
    \def\cy{\scp*0.3cm}
    \def\ty{3*\dy}
    \tikzset{whp/.style={circle, inner sep=0pt, text width={\pointsize}, draw=black, fill=white}}
    \tikzset{blp/.style={circle, inner sep=0pt, text width={\pointsize}, draw=black, fill=black}}
    \tikzset{lk/.style={regular polygon, regular polygon sides=4, inner sep=0pt, text width={\linksize}, draw=black, fill=black}}
    \draw[dotted] ({-\dx-\dd},{0}) -- ({\txl+\dd},{0});
    \draw[dotted] ({-\dx-\dd},{\ty}) -- ({\txl+\dd},{\ty});
    \node [blp] (l0) at ({0-1*\dx},{0+0*\ty}) {};    
    \node [blp] (l1) at ({0+0*\dx},{0+0*\ty}) {};
    \node [whp] (l2) at ({0+1*\dx},{0+0*\ty}) {};
    \node [blp] (l3) at ({0+2*\dx},{0+0*\ty}) {};
    \node [blp] (l4) at ({0+3*\dx},{0+0*\ty}) {};
    \node [blp] (u2) at ({0+1*\dx},{0+1*\ty}) {};
    \node [whp] (u3) at ({0+2*\dx},{0+1*\ty}) {};
    \node [lk, yshift={2*\dy}] at (l0) {};
    \node [lk, yshift={-\dy}] at (u2) {};
    \node [lk, yshift={-\dy}] at (u3) {};    
    \draw [->] (l1) --++ (0,{\dy});
    \draw (l2) -- (u2);
    \draw (l0) --++(0,{2*\dy}) -|(u3);
    \draw (l3) --++ (0,{\dy}) -| (l4);
    \node at ({\txl+\dx},{0.5*\ty}) {$p^\rcurvearrowdown$};
  \end{tikzpicture}  
  \end{mycenter}
\par
From $p$ we obtain the \emph{reflection} $\hat p$ by reversing the native total orders on both rows. The \emph{color inversion} $\overline{p}$ of $p$ is constructed by inverting the native coloring.
And the \emph{verticolor reflection} $\tilde p$ is the color inversion of the reflection of $p$. Categories are closed under verticolor reflection but generally neither under reflection nor color inversion. 
  \begin{mycenter}[0.5em]
            \begin{tikzpicture}[baseline=0.666*1.5cm]
    \def\scp{0.666}
    \def\linksize{\scp*0.075cm}
    \def\pointsize{\scp*0.25cm}
    \def\dd{\scp*0.5cm}
    \def\dx{\scp*1cm}
    \def\cx{\scp*0.3cm}
    \def\txu{2*\dx}    
    \def\txl{3*\dx}
    \def\dy{\scp*1cm}
    \def\cy{\scp*0.3cm}
    \def\ty{3*\dy}
    \tikzset{whp/.style={circle, inner sep=0pt, text width={\pointsize}, draw=black, fill=white}}
    \tikzset{blp/.style={circle, inner sep=0pt, text width={\pointsize}, draw=black, fill=black}}
    \tikzset{lk/.style={regular polygon, regular polygon sides=4, inner sep=0pt, text width={\linksize}, draw=black, fill=black}}
    \draw[dotted] ({0-\dd},{0}) -- ({\txl+\dd},{0});
    \draw[dotted] ({0-\dd},{\ty}) -- ({\txl+\dd},{\ty});
    \node [blp] (l1) at ({0+0*\dx},{0+0*\ty}) {};
    \node [whp] (l2) at ({0+1*\dx},{0+0*\ty}) {};
    \node [blp] (l3) at ({0+2*\dx},{0+0*\ty}) {};
    \node [blp] (l4) at ({0+3*\dx},{0+0*\ty}) {};
    \node [whp] (u1) at ({0+0*\dx},{0+1*\ty}) {};
    \node [blp] (u2) at ({0+1*\dx},{0+1*\ty}) {};
    \node [whp] (u3) at ({0+2*\dx},{0+1*\ty}) {};
    \node [lk, yshift={-\dy}] at (u1) {};
    \node [lk, yshift={-\dy}] at (u2) {};
    \node [lk, yshift={-\dy}] at (u3) {};    
    \draw [->] (l1) --++ (0,{\dy});
    \draw (l2) -- (u2);
    \draw (u1) --++(0,{-\dy}) -|(u3);
    \draw (l3) --++ (0,{\dy}) -| (l4);
    \node at ({-\dx},{0.5*\ty}) {$p$};    
  \end{tikzpicture}
  $\mapsto$
  \begin{tikzpicture}[baseline=0.666*1.5cm]
    \def\scp{0.666}
    \def\linksize{\scp*0.075cm}
    \def\pointsize{\scp*0.25cm}
    \def\dd{\scp*0.5cm}
    \def\dx{\scp*1cm}
    \def\cx{\scp*0.3cm}
    \def\txu{2*\dx}    
    \def\txl{3*\dx}
    \def\dy{\scp*1cm}
    \def\cy{\scp*0.3cm}
    \def\ty{3*\dy}
    \tikzset{whp/.style={circle, inner sep=0pt, text width={\pointsize}, draw=black, fill=white}}
    \tikzset{blp/.style={circle, inner sep=0pt, text width={\pointsize}, draw=black, fill=black}}
    \tikzset{lk/.style={regular polygon, regular polygon sides=4, inner sep=0pt, text width={\linksize}, draw=black, fill=black}}
    \draw[dotted] ({0-\dd},{0}) -- ({\txl+\dd},{0});
    \draw[dotted] ({0-\dd},{\ty}) -- ({\txl+\dd},{\ty});
    \node [whp] (l1) at ({0+0*\dx},{0+0*\ty}) {};
    \node [whp] (l2) at ({0+1*\dx},{0+0*\ty}) {};
    \node [blp] (l3) at ({0+2*\dx},{0+0*\ty}) {};
    \node [whp] (l4) at ({0+3*\dx},{0+0*\ty}) {};
    \node [blp] (u1) at ({0+1*\dx},{0+1*\ty}) {};
    \node [whp] (u2) at ({0+2*\dx},{0+1*\ty}) {};
    \node [blp] (u3) at ({0+3*\dx},{0+1*\ty}) {};
    \node [lk, yshift={-\dy}] at (u1) {};
    \node [lk, yshift={-\dy}] at (u2) {};
    \node [lk, yshift={-\dy}] at (u3) {};    
    \draw [->] (l4) --++ (0,{\dy});
    \draw (l3) -- (u2);
    \draw (u1) --++(0,{-\dy}) -|(u3);
    \draw (l1) --++ (0,{\dy}) -| (l2);
    \node at ({\txl+\dx},{0.5*\ty}) {$\tilde p$};    
  \end{tikzpicture}
  \end{mycenter}

 \par
  Lastly, categories are closed under erasing turns: A \emph{turn} is a consecutive neutral set of size two. The \emph{erasing} of a set $S$ of points from $p$ is the partition $E(p,S)$ obtained by removing $S$ and combining all the points whose blocks had a non-empty intersection with $S$ into one block.\pagebreak
  \begin{mycenter}[0.5em]
                \begin{tikzpicture}[baseline=0.666*1.5cm]
    \def\scp{0.666}
    \def\linksize{\scp*0.075cm}
    \def\pointsize{\scp*0.25cm}
    \def\dd{\scp*0.5cm}
    \def\dx{\scp*1cm}
    \def\cx{\scp*0.3cm}
    \def\txu{2*\dx}    
    \def\txl{3*\dx}
    \def\dy{\scp*1cm}
    \def\cy{\scp*0.3cm}
    \def\ty{3*\dy}
    \tikzset{whp/.style={circle, inner sep=0pt, text width={\pointsize}, draw=black, fill=white}}
    \tikzset{blp/.style={circle, inner sep=0pt, text width={\pointsize}, draw=black, fill=black}}
    \tikzset{lk/.style={regular polygon, regular polygon sides=4, inner sep=0pt, text width={\linksize}, draw=black, fill=black}}
    \draw[dotted] ({0-\dd},{0}) -- ({\txl+\dd},{0});
    \draw[dotted] ({0-\dd},{\ty}) -- ({\txl+\dd},{\ty});
    \node [blp] (l1) at ({0+0*\dx},{0+0*\ty}) {};
    \node [whp] (l2) at ({0+1*\dx},{0+0*\ty}) {};
    \node [blp] (l3) at ({0+2*\dx},{0+0*\ty}) {};
    \node [blp] (l4) at ({0+3*\dx},{0+0*\ty}) {};
    \node [whp] (u1) at ({0+0*\dx},{0+1*\ty}) {};
    \node [blp] (u2) at ({0+1*\dx},{0+1*\ty}) {};
    \node [whp] (u3) at ({0+2*\dx},{0+1*\ty}) {};
    \node [lk, yshift={-\dy}] at (u1) {};
    \node [lk, yshift={-\dy}] at (u2) {};
    \node [lk, yshift={-\dy}] at (u3) {};    
    \draw [->] (l1) --++ (0,{\dy});
    \draw (l2) -- (u2);
    \draw (u1) --++(0,{-\dy}) -|(u3);
    \draw (l3) --++ (0,{\dy}) -| (l4);
    \draw [densely dashed] ($(l3)+({\cx},{\cy})$) |- ($(l2)+({-\cx},{-\cy})$) |- cycle;
    \node at ({0.5*\txl},{-\dy}) {$S$};
    \node at ({-\dx},{0.5*\ty}) {$p$};    
  \end{tikzpicture}
  \quad$\mapsto$\quad
                \begin{tikzpicture}[baseline=0.666*1.5cm]
    \def\scp{0.666}
    \def\linksize{\scp*0.075cm}
    \def\pointsize{\scp*0.25cm}
    \def\dd{\scp*0.5cm}
    \def\dx{\scp*1cm}
    \def\cx{\scp*0.3cm}
    \def\txu{2*\dx}    
    \def\txl{3*\dx}
    \def\dy{\scp*1cm}
    \def\cy{\scp*0.3cm}
    \def\ty{3*\dy}
    \tikzset{whp/.style={circle, inner sep=0pt, text width={\pointsize}, draw=black, fill=white}}
    \tikzset{blp/.style={circle, inner sep=0pt, text width={\pointsize}, draw=black, fill=black}}
    \tikzset{lk/.style={regular polygon, regular polygon sides=4, inner sep=0pt, text width={\linksize}, draw=black, fill=black}}
    \draw[dotted] ({0-\dd},{0}) -- ({\txl+\dd},{0});
    \draw[dotted] ({0-\dd},{\ty}) -- ({\txl+\dd},{\ty});
    \node [blp] (l1) at ({0+0*\dx},{0+0*\ty}) {};
    \node [blp] (l4) at ({0+3*\dx},{0+0*\ty}) {};
    \node [whp] (u1) at ({0+0*\dx},{0+1*\ty}) {};
    \node [blp] (u2) at ({0+1*\dx},{0+1*\ty}) {};
    \node [whp] (u3) at ({0+2*\dx},{0+1*\ty}) {};
    \node [lk, yshift={-\dy}] at (u1) {};
    \node [lk, yshift={-\dy}] at (u2) {};
    \node [lk, yshift={-\dy}] at (u3) {};
    \node [lk, yshift={2*\dy}] at (l4) {};        
    \draw [->] (l1) --++ (0,{\dy});
    \draw (u1) --++(0,{-\dy}) -|(u2);
    \draw (u2) --++(0,{-\dy}) -|(u3);
    \draw (u3) --++(0,{-\dy}) -|(l4);    
    \node at ({\txl+2*\dx},{0.5*\ty}) {$E(p,S)$};    
  \end{tikzpicture}  
  \end{mycenter}

\vspace{-1em}
\subsection{Alternative Characterization of Categories}
Using the composite operations, we can give a helpful characterization of categories based on the idea in the proof of \cite[Lemma~3.6]{RaWe13b}.
 
\begin{lemma}[Alternative Characterization]
  \label{lemma:characterization-categories}
  A set $\mc C\subseteq \Cp$ is a category of two-colored partitions if and only if
  \begin{enumerate}[label=(\roman*)]
  \item\label{lemma:characterization-categories-1} $\PartIdenW\in \mc C$ and
  \item\label{lemma:characterization-categories-2} $\mc C$ is closed under tensor products, basic rotations, verticolor reflection and  erasing turns.
  \end{enumerate}
   
    \end{lemma}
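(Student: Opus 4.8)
The plan is to prove the two implications separately; the substance lies in the ``if'' direction, and I would model the argument on \cite[Lemma~3.6]{RaWe13b}.

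\emph{``Only if''.} If $\mc C\subseteq\Cp$ is a category, then $\PartIdenW\in\mc C$ by definition, which is~\ref{lemma:characterization-categories-1}. Closure under tensor products is likewise part of the definition, and closure under the basic rotations, under verticolor reflection, and under erasing turns was already recorded for arbitrary categories in the preceding subsection (each of these composite operations being built from tensor products, involution, and composition with identity- and turn-partitions; cf.\ also \cite{TaWe15a}). This gives~\ref{lemma:characterization-categories-2}.

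\emph{``If''.} Assume~\ref{lemma:characterization-categories-1} and~\ref{lemma:characterization-categories-2}. First I would recover the four remaining partitions a category must contain. The verticolor reflection of $\PartIdenW$ is $\PartIdenB$; rotating the unique upper point of $\PartIdenW$ down yields $\PartIdenW^{\lcurvearrowdown}=\PartIdenLoWB$ and $\PartIdenW^{\rcurvearrowdown}=\PartIdenLoBW$; and the two points of $\PartIdenLoWB$ form a turn, being consecutive and of opposite normalized colour, hence a neutral set of size two, so erasing it yields $\emptyset$. Thus $\emptyset,\PartIdenW,\PartIdenB,\PartIdenLoWB,\PartIdenLoBW\in\mc C$, and by further basic rotations $\mc C$ contains every ``cup'' and ``cap'' whose two legs carry opposite normalized colours, in either row. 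It then remains to derive closure under composition and under involution. For composition, take a composable pair $(p,q)$ with common middle row of length $b$. The idea is to bring $p$ and $q$, using basic rotations only, first into single-row form (empty lower rows) and then, after forming the tensor product, into a configuration in which the $b$ legs originating from the upper row of $p$ sit immediately beside their counterparts originating from the lower row of $q$, in the opposite order. Composability forces each of these $b$ adjacent pairs to be neutral, hence a turn; erasing the turns one after another merges exactly the blocks meeting along this seam, and rotating back to the position with lower row $R_{p,L}$ and upper row $R_{q,U}$ one checks the result is precisely $pq$ as defined via the join of the middle partitions and the induced merging of the blocks of $p$ and of $q$. Hence $pq\in\mc C$. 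With composition available, closure under involution follows from the ``zig-zag'': $p^{\ast}$ is a composition of a partition of the form $\mathrm{id}\otimes p\otimes\mathrm{id}$ (identity legs coloured to match the boundary of $p$) with suitable tensor products of the cups and caps produced above; alternatively, for a single-row partition, rotating all points into the opposite row and then applying verticolor reflection gives its involution, and the general case reduces to this by the same rotations used above. Therefore $\mc C$ contains the five required partitions and is closed under tensor products, involution and composition, i.e.\ it is a category.

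The step I expect to be the main obstacle is the bookkeeping hidden in the composition argument: verifying that after the rotations the seam genuinely consists of turns, and that iterating ``erase a turn'' reproduces the block merging of composition — in particular when a single block of the middle join links three or more blocks of $p$ and $q$, so that several successive erasures accumulate into one block — together with carefully tracking how the colourings change under the rotations used to straighten $p$ and $q$. The preliminary identifications (recovering $\PartIdenB$, $\PartIdenLoWB$, $\PartIdenLoBW$, $\emptyset$ and the cups/caps) and the reduction of involution to composition are, by contrast, routine.
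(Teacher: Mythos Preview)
Your proposal is correct and follows essentially the same route as the paper: recover the base partitions from $\PartIdenW$ via rotations and erasing, then reduce composition to a tensor product followed by iterated erasure of the turns along the seam. The one streamlining the paper offers is that it handles involution \emph{before} composition via the single identity $p^{\ast}=((\tilde p)^{\lcurvearrowup k})^{\lcurvearrowdown l}$, avoiding the zig-zag entirely; you mention this as an alternative, so there is no substantive gap.
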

    \begin{proof}
      Suppose $\mc C$ satisfies \ref{lemma:characterization-categories-1} and  \ref{lemma:characterization-categories-2}.
      Since $\PartIdenW^\lcurvearrowdown=\PartIdenLoWB$, since $\PartIdenLoWB^\lcurvearrowup=\PartIdenB$ and since $\PartIdenB^\lcurvearrowdown=\PartIdenLoBW$ and because $\mc C$ is closed under rotations, we find $\PartIdenB,\PartIdenLoWB,\PartIdenLoBW\in\mc C$. Erasing the only turn in $\PartIdenLoWB\in \mc C$, under which $\mc C$ is invariant, produces  $\emptyset\in \mc C$.
      \par
      For every partition $p\in \mc C$ with $k$ upper and $m$ lower points, the identity $p^\ast= ((\tilde p)^{\lcurvearrowup k})^{\lcurvearrowdown l}$ and the assumptions that $\mc C$ is stable under rotations and verticolor reflection proves that $p^\ast\in \mc C$. Hence, $\mc C$ is also involution-invariant.
      \par 
      Lastly, suppose that $(p,q)$ is a composable pairing from $\mc C$. We want to show  $r\eqpd pq\in \mc C$.  Let $q$ have $k$ upper and $l$ lower and let $p$ have $l$ upper and $m$ lower points. Since $\mc C$ is closed under rotations, it suffices to prove  $r^{\rcurvearrowup m}\in\mc C$. Let $(c_1,\ldots,c_l)$ for $c_1,\ldots,c_l\in \colors$ be the coloring of the lower row of $q$ left to right. Let $s$ be the tensor product of partitions from $\{\PartIdenW,\PartIdenB\}$ with lower row of coloring $(c_1,\ldots,c_l)$. Then, $(p,s)$ and $(s,q)$ are composable and $psq=pq=r$. The diagram below illustrates that the pairing $(s^{\rcurvearrowup l},q\otimes ((p^{\rcurvearrowdown l})^{\rcurvearrowup m}))$ is composable as well and that its composition yields the partition $r^{\rcurvearrowup m}$.
      \begin{mycenter}[1em]
    \begin{tikzpicture}[baseline=0.666*0cm]
    \def\scp{0.666}
    \def\linksize{\scp*0.075cm}
    \def\pointsize{\scp*0.25cm}
    \def\dd{\scp*0.5cm}
    \def\dx{\scp*1cm}
    \def\cx{\scp*0.3cm}
    \def\txu{4*\dx}    
    \def\txl{4*\dx}
    \def\dy{\scp*1cm}
    \def\cy{\scp*0.3cm}
    \def\ty{3*\dy}
    \tikzset{whp/.style={circle, inner sep=0pt, text width={\pointsize}, draw=black, fill=white}}
    \tikzset{blp/.style={circle, inner sep=0pt, text width={\pointsize}, draw=black, fill=black}}
    \tikzset{lk/.style={regular polygon, regular polygon sides=4, inner sep=0pt, text width={\linksize}, draw=black, fill=black}}
    \draw[dotted] ({0-\dd},{0}) -- ({\txl+\dd},{0});
    \draw[dotted] ({0-\dd},{\ty}) -- ({\txl+\dd},{\ty});
    \draw[dotted] ({0-\dd},{-\ty}) -- ({\txl+\dd},{-\ty});
    \coordinate (u1) at ({0*\dx},{1*\ty});
    \coordinate (u2) at ({2*\dx},{1*\ty});
    \coordinate (m1) at ({0*\dx},{0*\ty});
    \coordinate (m2) at ({2*\dx},{0*\ty});
    \coordinate (m3) at ({3*\dx},{0*\ty});
    \coordinate (l1) at ({0*\dx},{-1*\ty});
    \coordinate (l2) at ({4*\dx},{-1*\ty});    
    \draw [fill=white] ($(u1)+({-\cx},{\cy})$) -- ($(m1)+({-\cx},{-\cy})$) -- ($(m3)+({\cx},{-\cy})$) --++ (0,{0.5*\ty}) -| ($(u2)+({\cx},{\cy})$) -- cycle;
    \draw [fill=gray] ($(m1)+({-\cx},{\cy})$) -- ($(l1)+({-\cx},{-\cy})$) -- ($(l2)+({\cx},{-\cy})$) --++ (0,{0.5*\ty}) -| ($(m3)+({\cx},{\cy})$) -- cycle;
    \draw[fill=lightgray] ($(m1)+({-\cx},{-\cy})$) rectangle ($(m3)+({\cx},{\cy})$);
    \path (m1)-- node[pos=0.5] {$\ldots$}  (m2);
    \node at (m1) {$c_1$};
    \node at (m2) {$c_{l-1}$};
    \node at (m3) {$c_{l}$};
    \node at ({1*\dx},{0.5*\ty}) {$q$};
    \node at ({2*\dx},{-0.5*\ty}) {$p$};    
  \end{tikzpicture}\quad
    \begin{tikzpicture}[baseline=0.666*0cm]
    \def\scp{0.666}
    \def\linksize{\scp*0.075cm}
    \def\pointsize{\scp*0.25cm}
    \def\dd{\scp*0.5cm}
    \def\dx{\scp*1cm}
    \def\cx{\scp*0.3cm}
    \def\txu{4*\dx}    
    \def\txl{4*\dx}
    \def\dy{\scp*1cm}
    \def\cy{\scp*0.3cm}
    \def\ty{3*\dy}
    \tikzset{whp/.style={circle, inner sep=0pt, text width={\pointsize}, draw=black, fill=white}}
    \tikzset{blp/.style={circle, inner sep=0pt, text width={\pointsize}, draw=black, fill=black}}
    \tikzset{lk/.style={regular polygon, regular polygon sides=4, inner sep=0pt, text width={\linksize}, draw=black, fill=black}}
    \draw[dotted] ({0-\dd},{0}) -- ({\txl+\dd},{0});
    \draw[dotted] ({0-\dd},{\ty}) -- ({\txl+\dd},{\ty});
    \draw[dotted] ({0-\dd},{-\ty}) -- ({\txl+\dd},{-\ty});
    \draw[dotted] ({0-\dd},{-2*\ty}) -- ({\txl+\dd},{-2*\ty});    
    \coordinate (u1) at ({0*\dx},{1*\ty});
    \coordinate (u2) at ({2*\dx},{1*\ty});
    \coordinate (m1) at ({0*\dx},{0*\ty});
    \coordinate (m2) at ({2*\dx},{0*\ty});
    \coordinate (m3) at ({3*\dx},{0*\ty});
    \coordinate (l1) at ({0*\dx},{-1*\ty});
    \coordinate (l2) at ({2*\dx},{-1*\ty});
    \coordinate (l3) at ({3*\dx},{-1*\ty});
    \coordinate (b1) at ({0*\dx},{-2*\ty});
    \coordinate (b2) at ({4*\dx},{-2*\ty});    
    \draw (l1.north) -- (m1.south);
    \draw (l2.north) -- (m2.south);
    \draw (l3.north) -- (m3.south);
    \draw [fill=white] ($(u1)+({-\cx},{\cy})$) -- ($(m1)+({-\cx},{-\cy})$) -- ($(m3)+({\cx},{-\cy})$) --++ (0,{0.5*\ty}) -| ($(u2)+({\cx},{\cy})$) -- cycle;
    \draw [fill=gray] ($(l1)+({-\cx},{\cy})$) -- ($(b1)+({-\cx},{-\cy})$) -- ($(b2)+({\cx},{-\cy})$) --++ (0,{0.5*\ty}) -| ($(l3)+({\cx},{\cy})$) -- cycle;
    \path (m1)-- node[pos=0.5] {$\ldots$}  (m2);
    \path (l1)-- node[pos=0.5] {$\ldots$}  (l2);
    \path ($(l1)+(0,{0.333*\ty})$) -- node[pos=0.5] {$\ldots$}  ($(l2)+(0,{0.333*\ty})$);
    \node at (m1) {$c_1$};
    \node at (m2) {$c_{l-1}$};
    \node at (m3) {$c_{l}$};
    \node at (l1) {$c_1$};
    \node at (l2) {$c_{l-1}$};
    \node at (l3) {$c_{l}$};    
    \node at ({1*\dx},{0.5*\ty}) {$q$};
    \node at ({2*\dx},{-1.5*\ty}) {$p$};
    \node at ({1*\dx},{-0.333*\ty}) {$s$};
  \end{tikzpicture}\quad
      \begin{tikzpicture}[baseline=0.666*0cm]
    \def\scp{0.666}
    \def\linksize{\scp*0.075cm}
    \def\pointsize{\scp*0.25cm}
    \def\dd{\scp*0.5cm}
    \def\dx{\scp*1cm}
    \def\cx{\scp*0.3cm}
    \def\txu{7*\dx}    
    \def\txl{7*\dx}
    \def\dy{\scp*1cm}
    \def\cy{\scp*0.3cm}
    \def\ty{3*\dy}
    \tikzset{whp/.style={circle, inner sep=0pt, text width={\pointsize}, draw=black, fill=white}}
    \tikzset{blp/.style={circle, inner sep=0pt, text width={\pointsize}, draw=black, fill=black}}
    \tikzset{lk/.style={regular polygon, regular polygon sides=4, inner sep=0pt, text width={\linksize}, draw=black, fill=black}}
    \draw[dotted] ({0-\dd},{0}) -- ({\txl+\dd},{0});
    \draw[dotted] ({0-\dd},{\ty}) -- ({\txl+\dd},{\ty});
    \draw[dotted] ({0-\dd},{-\ty}) -- ({\txl+\dd},{-\ty});
    \coordinate (u1) at ({0*\dx},{1*\ty});
    \coordinate (u2) at ({2*\dx},{1*\ty});
    \coordinate (u3) at ({7*\dx},{1*\ty});
    \coordinate (m1) at ({0*\dx},{0*\ty});
    \coordinate (m2) at ({2*\dx},{0*\ty});
    \coordinate (m3) at ({3*\dx},{0*\ty});
    \coordinate (m4) at ({4*\dx},{0*\ty});    
    \coordinate (m5) at ({5*\dx},{0*\ty});
    \coordinate (m6) at ({7*\dx},{0*\ty});        
    \draw (m1.south) -- ++ (0,{-\cy-2*\dy}) -| (m6.south);
    \draw (m2.south) -- ++ (0,{-\cy-1*\dy}) -| (m5.south);
    \draw (m3.south) -- ++ (0,{-\cy-0.5*\dy}) -| (m4.south);
    \draw [fill=white] ($(u1)+({-\cx},{\cy})$) -- ($(m1)+({-\cx},{-\cy})$) -- ($(m3)+({\cx},{-\cy})$) -- ++ (0,{0.5*\ty}) -| ($(u2)+({\cx},{\cy})$) -- cycle;
    \draw [fill=gray] ($(m4)+({-\cx},{-\cy})$) -- ($(m6)+({\cx},{-\cy})$) -- ($(u3)+({\cx},{\cy})$) -- ($(u2)+({\dy-\cx},{\cy})$) -- ++ (0,{-0.5*\ty}) -| cycle;
    \path (m1)-- node[pos=0.5] {$\ldots$}  (m2);
    \path (m5)-- node[pos=0.5] {$\ldots$}  (m6);
    \node at (m1) {$c_1$};
    \node at (m2) {$c_{l-1}$};
    \node at (m3) {$c_{l}$};
    \node at (m4) {$\overline{c_l}$};
    \node at (m5) {$\overline{c_{l-1}}$};
    \node at (m6) {$\overline{c_{1}}$};    
    \node at ({1*\dx},{0.5*\ty}) {$q$};
    \node at ({5.5*\dx},{0.5*\ty}) {$(p^{\rcurvearrowdown l})^{\rcurvearrowup m}$};
    \node at ({1*\dx},{-1*\dy}) {$s^{\rcurvearrowup l}$};
    \path ({0.5*\txl},{-\cy-\dy})-- node[pos=0.5] {$\vdots$}  ({0.5*\txl},{-\cy-2*\dy});    
  \end{tikzpicture}  
\end{mycenter}

      \par
      Our assumptions guarantee $e_0\eqpd q\otimes ((p^{\rcurvearrowdown l})^{\rcurvearrowup m})\in\mc C$. Because $\mc C$ is assumed invariant under erasing  turns, if we define the turn $T_0\eqpd \{\lop{l},\lop{(l+1)}\}$ in $e_0\in \mc C$ and then for every $j=1,\ldots,l-1$ the turn $T_j\eqpd \{\lop{(l-j)},\lop{(l-j+1)}\}$ in $e_j\eqpd E(e_{j-1}, T_{j-1})\in \mc C$, then the partition $e_l\eqpd E(e_{l-1}, T_{l-1})\in \mc C$ is identical with $r^{\rcurvearrowup m}$ as the diagram below shows.
      \begin{mycenter}[1em]
              \begin{tikzpicture}[baseline=0.644*-1cm]
    \def\scp{0.644}
    \def\linksize{\scp*0.075cm}
    \def\pointsize{\scp*0.25cm}
    \def\dd{\scp*0.5cm}
    \def\dx{\scp*1cm}
    \def\cx{\scp*0.3cm}
    \def\txu{10*\dx}    
    \def\txl{10*\dx}
    \def\dy{\scp*1cm}
    \def\cy{\scp*0.3cm}
    \def\ty{1.5*\dy}
    \tikzset{whp/.style={circle, inner sep=0pt, text width={\pointsize}, draw=black, fill=white}}
    \tikzset{blp/.style={circle, inner sep=0pt, text width={\pointsize}, draw=black, fill=black}}
    \tikzset{lk/.style={regular polygon, regular polygon sides=4, inner sep=0pt, text width={\linksize}, draw=black, fill=black}}
    \draw[dotted] ({\dx+0-\dd},{2*\ty}) -- ({\dx+\txl+\dd},{2*\ty});
    \draw[dotted] ({\dx+0-\dd},{1*\ty}) -- ({\dx+\txl+\dd},{1*\ty});    
    \draw[dotted] ({\dx+0-\dd},{0*\ty}) -- ({\dx+\txl+\dd},{0*\ty});
    \draw[dotted] ({\dx+0-\dd},{-1*\ty}) -- ({\dx+\txl+\dd},{-1*\ty});
    \draw[dotted] ({\dx+0-\dd},{-2*\ty}) -- ({\dx+\txl+\dd},{-2*\ty});
    \draw[dotted] ({\dx+0-\dd},{-3*\ty}) -- ({\dx+\txl+\dd},{-3*\ty});   
    \coordinate (t1) at ({1*\dx},{2*\ty});
    \coordinate (t2) at ({2*\dx},{2*\ty});
    \coordinate (t3) at ({3.5*\dx},{2*\ty});
    \coordinate (t4) at ({4.5*\dx},{2*\ty});
    \coordinate (t5) at ({5.5*\dx},{2*\ty});
    \coordinate (t6) at ({6.5*\dx},{2*\ty});
    \coordinate (t7) at ({7.5*\dx},{2*\ty});
    \coordinate (t8) at ({8.5*\dx},{2*\ty});
    \coordinate (t9) at ({10*\dx},{2*\ty});
    \coordinate (t10) at ({11*\dx},{2*\ty});
    \coordinate (u1) at ({1*\dx},{1*\ty});
    \coordinate (u2) at ({2*\dx},{1*\ty});
    \coordinate (u3) at ({3.5*\dx},{1*\ty});
    \coordinate (u4) at ({4.5*\dx},{1*\ty});
    \coordinate (u5) at ({5.5*\dx},{1*\ty});
    \coordinate (u6) at ({6.5*\dx},{1*\ty});
    \coordinate (u7) at ({7.5*\dx},{1*\ty});
    \coordinate (u8) at ({8.5*\dx},{1*\ty});
    \coordinate (u9) at ({10*\dx},{1*\ty});
    \coordinate (u10) at ({11*\dx},{1*\ty});
    \coordinate (m1) at ({1*\dx},{0*\ty});
    \coordinate (m2) at ({2*\dx},{0*\ty});
    \coordinate (m3) at ({3.5*\dx},{0*\ty});
    \coordinate (m4) at ({4.5*\dx},{0*\ty});
    \coordinate (m5) at ({5.5*\dx},{0*\ty});
    \coordinate (m6) at ({6.5*\dx},{0*\ty});
    \coordinate (m7) at ({7.5*\dx},{0*\ty});
    \coordinate (m8) at ({8.5*\dx},{0*\ty});
    \coordinate (m9) at ({10*\dx},{0*\ty});
    \coordinate (m10) at ({11*\dx},{0*\ty});
    \coordinate (l1) at ({1*\dx},{-1*\ty});
    \coordinate (l2) at ({2*\dx},{-1*\ty});
    \coordinate (l3) at ({3.5*\dx},{-1*\ty});
    \coordinate (l4) at ({4.5*\dx},{-1*\ty});
    \coordinate (l5) at ({5.5*\dx},{-1*\ty});
    \coordinate (l6) at ({6.5*\dx},{-1*\ty});
    \coordinate (l7) at ({7.5*\dx},{-1*\ty});
    \coordinate (l8) at ({8.5*\dx},{-1*\ty});
    \coordinate (l9) at ({10*\dx},{-1*\ty});
    \coordinate (l10) at ({11*\dx},{-1*\ty});
    \coordinate (b1) at ({1*\dx},{-2*\ty});
    \coordinate (b2) at ({2*\dx},{-2*\ty});
    \coordinate (b3) at ({3.5*\dx},{-2*\ty});
    \coordinate (b4) at ({4.5*\dx},{-2*\ty});
    \coordinate (b5) at ({5.5*\dx},{-2*\ty});
    \coordinate (b6) at ({6.5*\dx},{-2*\ty});
    \coordinate (b7) at ({7.5*\dx},{-2*\ty});
    \coordinate (b8) at ({8.5*\dx},{-2*\ty});
    \coordinate (b9) at ({10*\dx},{-2*\ty});
    \coordinate (b10) at ({11*\dx},{-2*\ty});
    \path ($(t2)+(0,{-0.5*\ty})$)-- node[pos=0.5] {$\ldots$} ($(t3)+(0,{-0.5*\ty})$);
    \path ($(u2)+(0,{-0.5*\ty})$)-- node[pos=0.5] {$\ldots$} ($(u3)+(0,{-0.5*\ty})$);
    \path ($(t8)+(0,{-0.5*\ty})$)-- node[pos=0.5] {$\ldots$} ($(t9)+(0,{-0.5*\ty})$);
    \path ($(u8)+(0,{-0.5*\ty})$)-- node[pos=0.5] {$\ldots$} ($(u9)+(0,{-0.5*\ty})$);
    \path ({\dx+0.5*\txl},{0*\ty}) -- node[pos=0.5] {$\vdots$} ({\dx+0.5*\txl},{-1*\ty});
    \draw (b1) -- (l1);
    \draw (b10) -- (l10);
    \draw (m1) -- (t1);
    \draw (m10) -- (t10);    
    \draw (m1) -- (t1);
    \draw (m2) -- (t2);
    \draw (m3) -- (t3);
    \draw (m8) -- (t8);    
    \draw (m9) -- (t9);    
    \draw (m10) -- (t10);
    \draw (u4) -- (t4);
    \draw (u7) -- (t7);
    \draw (t5) --++(0,{-\dy}) -| (t6);            
    \draw (u4) --++(0,{-\dy}) -| (u7);        
    \draw (l2) --++(0,{-\dy}) -| (l9);    
    \draw (b1) --++(0,{-\dy}) -| (b10);
    \node[fill=white, inner sep=0cm] at (t1) {$c_1$};
    \node[fill=white, inner sep=0cm] at (t2) {$c_2$};
    \node[fill=white, inner sep=0cm] at (t3) {$c_{l-2}$};
    \node[fill=white, inner sep=0cm] at (t4) {$c_{l-1}$};
    \node[fill=white, inner sep=0cm] at (t5) {$c_{l}$};
    \node[fill=white, inner sep=0cm] at (t10) {$\overline{c_1}$};
    \node[fill=white, inner sep=0cm] at (t9) {$\overline{c_2}$};
    \node[fill=white, inner sep=0cm] at (t8) {$\overline{c_{l-2}}$};
    \node[fill=white, inner sep=0cm] at (t7) {$\overline{c_{l-1}}$};
    \node[fill=white, inner sep=0cm] at (t6) {$\overline{c_{l}}$};
    \node[fill=white, inner sep=0cm] at (u1) {$c_1$};
    \node[fill=white, inner sep=0cm] at (u2) {$c_2$};
    \node[fill=white, inner sep=0cm] at (u3) {$c_{l-2}$};
    \node[fill=white, inner sep=0cm] at (u4) {$c_{l-1}$};
    \node[fill=white, inner sep=0cm] at (u10) {$\overline{c_1}$};
    \node[fill=white, inner sep=0cm] at (u9) {$\overline{c_2}$};
    \node[fill=white, inner sep=0cm] at (u8) {$\overline{c_{l-2}}$};
    \node[fill=white, inner sep=0cm] at (u7) {$\overline{c_{l-1}}$};
    \node[fill=white, inner sep=0cm] at (m1) {$c_1$};
    \node[fill=white, inner sep=0cm] at (m2) {$c_2$};
    \node[fill=white, inner sep=0cm] at (m3) {$c_{l-2}$};
    \node[fill=white, inner sep=0cm] at (m10) {$\overline{c_1}$};
    \node[fill=white, inner sep=0cm] at (m9) {$\overline{c_2}$};
    \node[fill=white, inner sep=0cm] at (m8) {$\overline{c_{l-2}}$};
    \node[fill=white, inner sep=0cm] at (l1) {$c_1$};
    \node[fill=white, inner sep=0cm] at (l2) {$c_2$};
    \node[fill=white, inner sep=0cm] at (l10) {$\overline{c_1}$};
    \node[fill=white, inner sep=0cm] at (l9) {$\overline{c_2}$};
    \node[fill=white, inner sep=0cm] at (b1) {$c_1$};
    \node[fill=white, inner sep=0cm] at (b10) {$\overline{c_1}$};
  \end{tikzpicture}
  $=$
              \begin{tikzpicture}[baseline=0.644*0cm]
    \def\scp{0.644}
    \def\linksize{\scp*0.075cm}
    \def\pointsize{\scp*0.25cm}
    \def\dd{\scp*0.5cm}
    \def\dx{\scp*1cm}
    \def\cx{\scp*0.3cm}
    \def\txu{10*\dx}    
    \def\txl{10*\dx}
    \def\dy{\scp*1cm}
    \def\cy{\scp*0.3cm}
    \def\ty{2*\dy}
    \tikzset{whp/.style={circle, inner sep=0pt, text width={\pointsize}, draw=black, fill=white}}
    \tikzset{blp/.style={circle, inner sep=0pt, text width={\pointsize}, draw=black, fill=black}}
    \tikzset{lk/.style={regular polygon, regular polygon sides=4, inner sep=0pt, text width={\linksize}, draw=black, fill=black}}
    \draw[dotted] ({\dx+0-\dd},{2*\ty}) -- ({\dx+\txl+\dd},{2*\ty});
    \draw[dotted] ({\dx+0-\dd},{0*\ty}) -- ({\dx+\txl+\dd},{0*\ty});
    \coordinate (t1) at ({1*\dx},{2*\ty});
    \coordinate (t2) at ({2*\dx},{2*\ty});
    \coordinate (t3) at ({3.5*\dx},{2*\ty});
    \coordinate (t4) at ({4.5*\dx},{2*\ty});
    \coordinate (t5) at ({5.5*\dx},{2*\ty});
    \coordinate (t6) at ({6.5*\dx},{2*\ty});
    \coordinate (t7) at ({7.5*\dx},{2*\ty});
    \coordinate (t8) at ({8.5*\dx},{2*\ty});
    \coordinate (t9) at ({10*\dx},{2*\ty});
    \coordinate (t10) at ({11*\dx},{2*\ty});            
    \path ($(t2)+(0,{-0.5*\ty})$)-- node[pos=0.5] {$\ldots$} ($(t3)+(0,{-0.5*\ty})$);
    \path ($(t8)+(0,{-0.5*\ty})$)-- node[pos=0.5] {$\ldots$} ($(t9)+(0,{-0.5*\ty})$);
    \path ({\dx+0.5*\txl},{2*\ty}) -- node[pos=0.5] {$\vdots$} ({\dx+0.5*\txl},{0*\ty});
    \draw (t5) --++(0,{-1*0.5*\dy}) -| (t6);
    \draw (t4) --++(0,{-2*0.5*\dy}) -| (t7);
    \draw (t3) --++(0,{-3*0.5*\dy}) -| (t8);
    \draw (t2) --++(0,{-5*0.5*\dy}) -| (t9);
    \draw (t1) --++(0,{-6*0.5*\dy}) -| (t10);    
    \node[fill=white, inner sep=0cm] at (t1) {$c_1$};
    \node[fill=white, inner sep=0cm] at (t2) {$c_2$};
    \node[fill=white, inner sep=0cm] at (t3) {$c_{l-2}$};
    \node[fill=white, inner sep=0cm] at (t4) {$c_{l-1}$};
    \node[fill=white, inner sep=0cm] at (t5) {$c_{l}$};
    \node[fill=white, inner sep=0cm] at (t10) {$\overline{c_1}$};
    \node[fill=white, inner sep=0cm] at (t9) {$\overline{c_2}$};
    \node[fill=white, inner sep=0cm] at (t8) {$\overline{c_{l-2}}$};
    \node[fill=white, inner sep=0cm] at (t7) {$\overline{c_{l-1}}$};
    \node[fill=white, inner sep=0cm] at (t6) {$\overline{c_{l}}$};
  \end{tikzpicture}
  \end{mycenter}
      \par
      Thus we have deduced $r^{\rcurvearrowup m}\in \mc C$ as claimed. This proves that $\mc C$ is closed under composition of composable pairs and thus a category. 
    \end{proof}

\section{\texorpdfstring{The Sets $\mc R_Q$}{The Sets R(Q)}}
\label{section:definition-of-the-categories}

In the following, we will define in several steps an index set $\mathsf Q$ and for each $Q\in \mathsf Q$  a set $\mc R_Q\subseteq \Cp$ of partitions. The aim will be to show that each of these constitutes a non-hyperoctahedral category (see Theorem~\ref{theorem:main}, the main result of this article). Auxiliary objects $\mathsf L$ and $Z$ aid in defining $\mathsf Q$ and $(\mc R_Q)_{Q\in \mathsf Q}$.
        \begin{notation}
          \label{notation:power-set}
For every set $S$ denote its power set by $\mathfrak P(S)$.
        \end{notation}
        \begin{definition}
          \label{definition:parameter-domain}
          We define the \emph{parameter domain} $\mathsf L$ as the six-fold Cartesian product of $\mathfrak P(\integers)$: \[\displaystyle\mathsf L\eqpd \mathfrak{P}(\integers)\times\mathfrak{P}(\integers)\times \mathfrak{P}(\integers)\times \mathfrak{P}(\integers)\times \mathfrak{P}(\integers)\times \mathfrak{P}(\integers).\]
        \end{definition}
        \begin{definition}
          \label{definition:Z}
  Define the \emph{analyzer}  $Z: \, \mathfrak{P}(\Cp)\to \mathsf L$ by
        \begin{align*}
                    Z\eqpd (\, F,\, V,\,  \toco,\,  L,\,  K,\,  X\,)
        \end{align*}
where, for all $\mc S\subseteq \Cp$,
\begin{enumerate}[label=(\alph*)]
\item  \(F(\mc S)\eqpd \{\, |B| \mid p\in \mc S,\, B\text{ block of } p\}\) is the set of block sizes,
\item  \(V(\mc S)\eqpd  \{\,\sigma_p(B)\mid p\in \mc S,\, B\text{ block of }p\}\) is the set of block color sums,
\item \(\toco(\mc S)\eqpd \{\,\toco(p)\mid p\in \mc S\}\) is the set of total color sums,
\item $\begin{aligned}[t]
                L(\mc S)\eqpd \{\,\delta_p(\alpha_1,\alpha_2)\mid&\, p\in \mc S, \, B\text{ block of }p,\, \alpha_1,\alpha_2\in B,\, \alpha_1\neq \alpha_2,\\
                &\, ]\alpha_1,\alpha_2[_p\cap B=\emptyset,\, \sigma_p(\{\alpha_1,\alpha_2\})\neq 0\}
                \end{aligned}$ \\
is the set of color distances between any two subsequent legs of the \emph{same} block having the \emph{same} normalized color,
\item $\begin{aligned}[t]
    K(\mc S)\eqpd \{\,\delta_p(\alpha_1,\alpha_2)\mid\,& p\in \mc S, \, B\text{ block of }p,\, \alpha_1,\alpha_2\in B,\, \alpha_1\neq \alpha_2,\\
    &\, ]\alpha_1,\alpha_2[_p\cap B=\emptyset,\, \sigma_p(\{\alpha_1,\alpha_2\})= 0\}
    \end{aligned}$\\
is the set of color distances between any two subsequent legs of the \emph{same} block having \emph{different} normalized colors and
\item $\begin{aligned}[t]
    X(\mc S)\eqpd \{\,\delta_p(\alpha_1,\alpha_2) \mid \, & p\in \mc S,\, B_1,B_2\text{ blocks of }p, \, B_1\text{ crosses } B_2,\\
 &\,  \alpha_1\in B_1,\,\alpha_2\in B_2\}
    \end{aligned}$ \\
is the set of color distances between any two legs belonging to two \emph{crossing} blocks.
\end{enumerate}
\end{definition}        
        The parameter domain $\mathsf L$ and the analyzer $Z$ allow us to now define the following map which will later induce the announced family $(\mc R_Q)_{Q\in \mathsf Q}$.

        \begin{notation}
Given a family $(S_i)_{i\in I}$ of sets, we write $\leq$ for the product order on the Cartesian product  $\bigtimes_{i\in I}\mathfrak{P}(S_i)$ induced by the partial orders $\subseteq$ on the factors.
        \end{notation}
\begin{definition}
  \label{definition:parametrization}
        Define the \emph{parametrization} as the mapping
        \begin{align*}
          \mc R:\; \mathsf L\to \mathfrak{P}(\Cp),\; L \mapsto \mc R_L\eqpd \{p\in \Cp \mid Z(\{p\})\leq L\}.
        \end{align*}
      \end{definition}
      With $\mc R$ we can single out sets of partitions by placing restrictions  on the six aggregated combinatorial features of partitions listed above.
      \par
   \begin{notation}
     \label{integer-notations}
     \begin{enumerate}[label=(\alph*)]
            \item\label{integer-notations-2} For all $x,y\in \integers$ and $A,B\subseteq \integers$ write
         \begin{align*}
           xA\!+\!yB\eqpd \{xa+yb\mid a\in A, \,b\in B\}.
         \end{align*}
         Moreover, put then $xA-yB\eqpd xA+(-y)B$. Per $A=\{1\}$ expressions like $x+yB$  are defined as well, and per $x=1$ so are such like $A+yB$. 
       \item\label{integer-notations-3} Let  $\pm S\eqpd S\cup(-S)$ for all sets $S\subseteq \integers$.
       \item\label{integer-notations-4} For all $m\in \integers$ and $D\subseteq \integers$ define
         \begin{align*}
           D_m\eqpd (D\cup(m\!-\! D))+m\integers \quad\text{and}\quad D_m'\eqpd (D\cup(m\!-\!D)\cup \{0\})+m\integers.
         \end{align*}
                \item\label{integer-notations-5}   Use the abbreviations $\dwi{0}\eqpd\emptyset$ and $\dwi{k}\eqpd \{1,\ldots,k\}$ for all $k\in \pint$.
     \end{enumerate}
   \end{notation}

   \begin{definition}
     \label{definition:Q}
    Define the \emph{parameter range} $\mathsf Q$ as the subset of $\mathsf L$ comprising exactly all tuples
    \begin{align*}
      (f,v,s,l,k,x)
    \end{align*}
    listed below, where  $u\in \{0\}\cup \pint$, $m\in \pint$, $D\subseteq \{0\}\cup\dwi{\lfloor\frac{m}{2}\rfloor}$, where $E\subseteq \{0\}\cup\pint$ and where $N$ is a subsemigroup of $(\pint,+)$:\pagebreak
                    \begin{align*}
                      \begin{matrix}
                        f&v&s&l&k& x  \\ \hline \\[-0.85em]
                        \{2\} & \pm\{0, 2\} & 2um\integers & m\integers & m\integers & \integers\\
                      \{2\} & \pm\{0, 2\} & 2um\integers & m\hspace{-2.5pt}+\hspace{-2.5pt}2m\integers & 2m\integers & \integers \\
                        \{2\} & \pm \{0, 2\} & 2um\integers & m\hspace{-2.5pt}+\hspace{-2.5pt}2m\integers & 2m\integers & \integers\backslash m\integers \\
                      \{2\} & \{0\} & \{0\} & \emptyset & m\integers & \integers\\
                      \{2\} & \pm\{0, 2\} & \{0\} & \{0\} & \{0\} &  \integers\backslash N_0 \\
                      \{2\} & \{0\} & \{0\} & \emptyset & \{0\} & \integers\backslash N_0 \\
                      \{2\} & \{0\} & \{0\} & \emptyset & \{0\} &\integers\backslash N_0' \\
                         \{1,2\}&\pm\{0, 1, 2\} & um\integers & m\integers & m\integers & \integers\backslash D_m\\
                                                 \{1,2\}&\pm\{0, 1, 2\} & 2um\integers & m\hspace{-2.5pt}+\hspace{-2.5pt}2m\integers & 2m\integers & \integers\backslash D_m\\
                         \{1,2\}&\pm \{0, 1\} & um\integers & \emptyset & m\integers & \integers\backslash D_m \\
                        \{1,2\}&\pm\{0, 1, 2\} & \{0\} & \{0\} & \{0\} & \integers\backslash E_0 \\
                      \{1,2\}&\pm\{0,  1\} & \{0\} & \emptyset & \{0\} & \integers\backslash E_0\\
                        \pint & \integers & um\integers & m\integers & m\integers & \integers\backslash D_m\\
                        \pint & \integers & \{0\} & \{0\} & \{0\} & \integers\backslash E_0\\
                    \end{matrix}
                    \end{align*}
                  \end{definition}
With $\mathsf Q$ and $\mc R$ defined, so has been the family $(\mc R_Q)_{Q\in \mathsf Q}$. The characterizing conditions $Z(\{\,\cdot\,\})\leq Q$ of the sets $\mc R_Q$, $Q\in \mathsf Q$, will be successively explained in Section~\ref{section:proof-main-theorem} in the process of proving their invariance under the category operations.

\section{\texorpdfstring{Invariance of $\mc R_Q$ under the Category Operations}{Invariance of R(Q) under Category Operations}}
\label{section:proof-main-theorem}
The strategy for proving that the sets defined in the preceding Section~\ref{section:definition-of-the-categories} are actually categories of partitions is the following: We choose the most convenient elements $Q$ of $\mathsf Q$, the ones for which it is easiest to prove that $\mc R_Q$ is a category. Once we have verified that these sets $\mc R_Q$ are categories, we show that every other $Q\in \mathsf{Q}$ can be written as a meet of a suitable family $\mathsf Q'\subseteq \mathsf Q$ of convenient ones  in the complete lattice $\mathsf L$. Then, the following Lemma~\ref{lemma:R-monotonic-meet-preserving} will allow us to conclude that $\mc R_Q$ is a category for every $Q\in \mathsf Q$.
\begin{notation}
 Given a family $(S_i)_{i\in I}$ of sets, we use the symbols $\bigcap_\times$ for the meet and $\bigcup_\times$ for the join operator of the product order $\leq$ with respect to $\subseteq$ on  $\bigtimes_{i\in I}\mathfrak{P}(S_i)$.
\end{notation}
      \begin{lemma}
        \label{lemma:R-monotonic-meet-preserving}
        The mapping $\mc R:\, \mathsf L\to \mathfrak{P}(\Cp)$ is monotonic and preserves meets.
      \end{lemma}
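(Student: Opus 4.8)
The statement has two halves: monotonicity and meet-preservation. For monotonicity, the plan is to observe that $\mc R$ is defined pointwise by the condition $Z(\{p\})\leq L$, so if $L\leq L'$ in the product order on $\mathsf L$, then $Z(\{p\})\leq L\leq L'$ immediately gives $\mc R_L\subseteq \mc R_{L'}$. This is essentially a one-line argument once the definitions are unwound: the product order $\leq$ is transitive in each of the six coordinates, so $\subseteq$ is preserved.

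\textbf{Meet-preservation.} The substantive part is showing $\mc R\bigl(\bigcap_\times \mx M\bigr)=\bigcap_{M\in \mx M}\mc R_M$ for every family $\mx M\subseteq \mathsf L$ (and also the nullary case, $\mc R$ of the top element of $\mathsf L$, namely $(\integers,\ldots,\integers)$, equals all of $\Cp$, which is clear). First I would spell out that the meet in $\mathsf L=\bigtimes_{i=1}^6\mathfrak P(\integers)$ is computed coordinatewise, and in each coordinate the meet with respect to $\subseteq$ is just intersection: $\bigcap_\times \mx M=\bigl(\bigcap_{M}M_1,\ldots,\bigcap_{M}M_6\bigr)$ where $M=(M_1,\ldots,M_6)$. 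Then the key point is the following chain of equivalences for a fixed $p\in \Cp$: writing $Z(\{p\})=(Z_1,\ldots,Z_6)$ with $Z_j$ the relevant finite set of integers (block sizes, block color sums, etc.), we have
\begin{align*}
  p\in \mc R\Bigl({\textstyle\bigcap_\times \mx M}\Bigr)
  &\iff Z_j\subseteq {\textstyle\bigcap_{M\in\mx M}}M_j \text{ for all } j\in\{1,\ldots,6\}\\
  &\iff Z_j\subseteq M_j \text{ for all } j \text{ and all } M\in \mx M\\
  &\iff p\in \mc R_M \text{ for all } M\in \mx M
  \iff p\in {\textstyle\bigcap_{M\in\mx M}}\mc R_M.
\end{align*}
The only genuinely non-formal ingredient is the equivalence $Z_j\subseteq\bigcap_M M_j\iff(\forall M)\,Z_j\subseteq M_j$, which is just the universal property of intersection and needs no hypothesis on $\mx M$ (it holds for the empty family too, where both sides say $p\in\Cp$). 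I would also remark that because $\mc R$ is monotonic and $\mathsf L$ is a complete lattice, preserving arbitrary meets is equivalent to preserving binary meets and the top element, so it would suffice to check $\mc R_{L}\cap\mc R_{L'}=\mc R_{L\cap_\times L'}$ and $\mc R_{(\integers,\ldots,\integers)}=\Cp$; but the direct computation above handles all arities uniformly, so I would simply present that.

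\textbf{Main obstacle.} Honestly there is no serious obstacle here — the lemma is a bookkeeping statement whose whole content is that $\mc R$ is a ``pullback along $Z$ of the pointwise order,'' and such maps automatically preserve meets. The only place to be careful is notational: one must keep straight that $Z$ takes values in $\mathsf L$ via $Z=(F,V,\toco,L,K,X)$ evaluated on the singleton $\{p\}$, and that the product order $\leq$ on $\mathsf L$ is the one whose meet is $\bigcap_\times$; conflating $\mc R$'s index-set order with set inclusion among the $\mc R_L$'s is the one pitfall. I would therefore state explicitly at the outset that $\mc R_L=\{p\in\Cp\mid Z(\{p\})\leq L\}$ and that $Z(\{p\})\leq L$ unfolds to the six inclusions $F(\{p\})\subseteq f$, $V(\{p\})\subseteq v$, $\toco(\{p\})\subseteq s$, $L(\{p\})\subseteq l$, $K(\{p\})\subseteq k$, $X(\{p\})\subseteq x$ when $L=(f,v,s,l,k,x)$, and then the two claims drop out.
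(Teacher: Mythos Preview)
Your proposal is correct and follows essentially the same approach as the paper's proof: both establish monotonicity by transitivity of $\leq$ and meet-preservation via the chain $p\in\mc R_{\bigcap_\times \mx M}\iff Z(\{p\})\leq\bigcap_\times\mx M\iff(\forall M)\,Z(\{p\})\leq M\iff p\in\bigcap_M\mc R_M$. The paper works abstractly with the product order rather than unpacking into the six coordinates, but the argument is identical in substance.
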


      \begin{proof}
 For $L,L'\in \mathsf L$ with $L\leq L'$ and $p\in \Cp$ the condition $p\in \mc R_L$, i.e.\ $Z(\{p\})\leq L$, implies $Z(\{p\})\leq L'$ and thus $p\in \mc R_{L'}$. Hence, $\mc R$ is monotonic. And for all subsets $\mathsf L'\subseteq \mathsf L$ holds
        \begin{align*}
          \bigcap \{\mc R_L\mid L\in \mathsf L'\}&=\bigcap \{\{p\in \Cp \mid Z(\{p\})\leq L\}\mid L\in \mathsf L'\}\\
                                                 &= \{p\in \Cp \mid \forall L\in \mathsf L': Z(\{p\})\leq L\}\\
          &= \{p\in \Cp \mid  Z(\{p\})\leq  {\bigcap}_\times \mathsf L'\}\\          
                                                   &=\mc R_{{\bigcap}_\times \mathsf L'},
        \end{align*}
        where we have only used the definition of $\mc R$. \qedhere
      \end{proof}

      \subsection{Behavior of $Z$ under Category Operations}
      In order to show that $\mc R_Q$ is a category for the convenient values $Q \in \mathsf Q$, we will use the Alternative Characterization (Lemma~\ref{lemma:characterization-categories}). It  pays to consider abstractly how $Z$ behaves under the (alternative) category operations beforehand.
      \subsubsection{Behavior of $Z$ under Rotation}
\begin{lemma}
      \label{lemma:Z-rotation}
  For all $r\in \{\lcurvearrowup,\lcurvearrowdown,\rcurvearrowdown,\rcurvearrowup\}$ and $p\in \mc C$ holds
  \begin{align*}
    Z(\{p^r\})=Z(\{p\}).
  \end{align*}
\end{lemma}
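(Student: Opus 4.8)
The plan is to show that each of the six component maps $F,V,\toco,L,K,X$ of the analyzer $Z$ is unchanged when $p$ is replaced by $p^r$ for any basic rotation $r$. Since $Z=(F,V,\toco,L,K,X)$, this suffices. By the symmetry between the four rotations (each being the inverse of another, or obtainable from another by verticolor reflection — but to be safe one should just argue all four directly, or note that $\lcurvearrowup$ and $\lcurvearrowdown$ are mutually inverse, as are $\rcurvearrowup$ and $\rcurvearrowdown$, so it suffices to treat, say, $r=\lcurvearrowdown$ and $r=\rcurvearrowdown$), the essential point is the following: rotating a point $\alpha$ (the leftmost or rightmost upper point) down to the opposite row and inverting its native color changes neither the block structure (the rotated point stays in the same block, merely moved) nor the cyclic order on the set of points. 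Indeed, the orientation was defined precisely so that the cyclic order is insensitive to which row a boundary point sits in: moving the leftmost upper point to just left of the leftmost lower point, or the rightmost upper point to just right of the rightmost lower point, leaves the counter-clockwise cyclic sequence of all points literally the same sequence.

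First I would record the two invariants that drive everything: (1) there is a canonical bijection between the points of $p$ and the points of $p^r$ which preserves the cyclic order, and (2) this bijection carries blocks of $p$ to blocks of $p^r$ and preserves the \emph{normalized} color of every point. Claim (1) is immediate from the definition of orientation in Section~\ref{section:orientation}. Claim (2) for the block structure is part of the definition of rotation (the rotated point ``replaces $\alpha$ as far as the blocks are concerned''); for the normalized color it is the key computation — the rotated point changes rows \emph{and} has its native color inverted, and since the normalized color of an upper point is the inverse of its native color while that of a lower point equals its native color, these two inversions cancel, so the normalized color is preserved. All other points keep both their row and their native color, hence trivially their normalized color.

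Given (1) and (2), each component of $Z$ is manifestly invariant. For $F$: block sizes depend only on the block structure. For $\toco$: $\toco(p)=\sigma_p(P_p)$ is the sum of normalized-color signs over all points, preserved by (2). For $V$: $\sigma_p(B)$ for a block $B$ is the sum of normalized-color signs over $B$, and blocks correspond under (2). For $L$, $K$, $X$: these are sets of values $\delta_p(\alpha_1,\alpha_2)$, and $\delta_p$ is defined purely in terms of the cyclic order (via the intervals $]\alpha_1,\alpha_2[_p$ and $]\alpha_1,\alpha_2]_p$, whose point-sets correspond under (1)), the normalized colors of the endpoints and the interior points (via $\sigma_p$, preserved by (2)), and $\toco(p)$ (in the $\alpha_1=\alpha_2$ case, already handled); moreover the side-conditions defining $L$, $K$, $X$ — membership in a common block, the interval $]\alpha_1,\alpha_2[_p$ meeting $B$ trivially, $\sigma_p(\{\alpha_1,\alpha_2\})$ being zero or not, two blocks crossing — are all expressed in terms of the cyclic order and the block structure and normalized colors, hence are preserved. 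Therefore $Z(\{p^r\})=Z(\{p\})$.

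I do not expect a serious obstacle here; the lemma is essentially bookkeeping once the ``orientation is rotation-invariant'' principle is stated cleanly. The only point requiring a moment's care — and the place I would be most explicit in writing it up — is the normalized-color cancellation in (2): one must check that the definition of the native color of the \emph{new} point $\beta$ (the inverse of the native color of the removed point $\alpha$) together with $\beta$ landing in the opposite row produces the same normalized color as $\alpha$ had, and symmetrically for the upward rotations. A second, very minor, point is to make sure the cyclic-order bijection (1) is genuinely the identity on the native orderings restricted to each row except for the single migrating point, so that ``consecutive'', ``ordered tuple'' and ``crossing'' all transport verbatim; this is again immediate from the orientation axioms but worth one sentence.
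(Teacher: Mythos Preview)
Your proposal is correct and follows essentially the same route as the paper: both arguments rest on the two facts that rotation preserves the cyclic order and preserves normalized colors (the paper packages these as the formulas $\sigma_{p^r}(S)=\sigma_p(\rho^{-1}(S))$ and $\delta_{p^r}(\alpha,\beta)=\delta_p(\rho^{-1}(\alpha),\rho^{-1}(\beta))$), and then read off invariance of each component of $Z$. Your identification of the normalized-color cancellation as the one point deserving care matches exactly what the paper highlights in its Step~1.
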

\begin{proof}
  Let $\rho$ be the map that rotates the points of $p$ to those of $p^r$.
  \par  
  \textbf{Step~1:}~\emph{Relating $\sigma_{p^r}$ to $\sigma_p$}.  Because the normalized color is defined specially to compensate for the color change involved in rotations,
  \begin{align}
    \label{eq:Z-rotation-1}
    \sigma_{p^r}(S)=\sigma_p(\rho^{-1}(S))
  \end{align}
  for all sets $S$ of points of $p^r$.
  \par
  \textbf{Step~2:}~\emph{Relating $\delta_{p^r}$ to $\delta_p$}. The cyclic order is also defined exactly to be respected by rotations:
  For all points $\alpha$ and $\beta$ of $p^r$, the identity  $]\alpha,\beta]_{p^r}=\rho(]\rho^{-1}(\alpha),\rho^{-1}(\beta)]_p)$, Equation~\eqref{eq:Z-rotation-1} and the definition of $\delta_p$ imply
  \begin{align}
    \label{eq:Z-rotation-2}    \delta_{p^r}(\alpha,\beta)&=\sigma_{p^r}(]\alpha,\beta]_{p^r})+{\textstyle\frac{1}{2}}(\sigma_{p^r}(\{\alpha\})-\sigma_{p^r}(\{\beta\}))\notag\\
&=\sigma_{p}(]\rho^{-1}(\alpha),\rho^{-1}(\beta)]_{p})+{\textstyle\frac{1}{2}}(\sigma_{p}(\rho^{-1}(\{\alpha\}))-\sigma_{p}(\rho^{-1}\{\beta\}))\notag\\
    &=\delta_p(\rho^{-1}(\alpha),\rho^{-1}(\beta)).
  \end{align}
  % Of course, $\rho$ is injective. Thus, we have shown
  % \begin{align}
  %   \label{eq:Z-rotation-2}    
  %   \delta_{p^r}=\rho_\ast\delta_{p}.
  % \end{align}
  \par
  \textbf{Step~3:}~\emph{Implications for $Z$.} By definition, the blocks of $p^r$ are precisely the sets $\rho(B)$ for blocks $B$ of $p$. Hence, $F(\{p^r\})=F(\{p\})$ as $\rho$ is bijective. But also therefore, $V(\{p^r\})=V(\{p\})$ by Equation~\eqref{eq:Z-rotation-1}. The same equation also shows $\toco(p^r)=\toco(p)$ by choosing for $S$ the total set of points.
  \par
  Not only does $\rho$ map blocks to blocks but also subsequent legs to subsequent legs because rotation respects the cyclic order. And this is how all pairs of subsequent legs in $p^r$ arise. And Equation~\eqref{eq:Z-rotation-1} shows that two subsequent legs $\alpha$ and $\beta$ in $p^r$ satisfy $\sigma_{p^r}(\{\alpha,\beta\})=0$ if and only if their preimages, subsequent legs in $p$, satisfy $\sigma_p(\{\rho^{-1}(\alpha),\rho^{-1}(\beta)\})=0$. Hence, Equation~\eqref{eq:Z-rotation-2} proves  $(L,K)(\{p^r\})=(L,K)(\{p\})$.
  \par
  Since $\rho$ maps crossing blocks to crossing blocks and since all crossings of $p^r$ come from crossings in $p$, Equation~\eqref{eq:Z-rotation-2} also verifies $X(\{p^r\})=X(\{p\})$. Thus, $Z(\{p^r\})=Z(\{p\})$ as asserted.
\end{proof}

      \subsubsection{Behavior of $Z$ under Verticolor Reflection}
\begin{lemma}
  \label{lemma:Z-verticolor-reflection}
  For all $p\in \mc C$ holds
  \begin{align*}
    Z(\{\tilde p\})=(F,-V,-\toco,-L,-K,-X)(\{p\}).
  \end{align*}
\end{lemma}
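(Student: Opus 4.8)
The plan is to mimic the structure of the proof of Lemma~\ref{lemma:Z-rotation}, tracking how the verticolor reflection $\tilde p$ transforms the three pieces of combinatorial data (blocks, the color sum $\sigma$, the cyclic order) that the six components of $Z$ depend on. Recall $\tilde p$ is the color inversion of the reflection $\hat p$. Let $\tau$ be the bijection sending each point of $p$ to the corresponding point of $\tilde p$; concretely, $\tau$ reverses the native order of each row (the reflection) while the coloring is simultaneously inverted. The first step is to understand the normalized color: reversing the rows does not change which row a point lies in, so a lower point stays lower and an upper point stays upper; but the native coloring is inverted. Hence the normalized color of $\tau(\alpha)$ in $\tilde p$ is the inverse of the normalized color of $\alpha$ in $p$. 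Consequently $\sigma_{\tilde p}(\tau(S)) = -\sigma_p(S)$ for every set $S$ of points of $p$.

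The second step concerns the cyclic order. The reflection reverses $\leq_L$ and $\leq_U$; by the defining conditions of the orientation (Section~\ref{section:orientation}), reversing both native orders reverses the cyclic order as well. Therefore $\tau$ is an anti-isomorphism of cyclic orders, which gives $]\,\tau(\alpha),\tau(\beta)\,]_{\tilde p} = \tau\big([\,\beta,\alpha\,[_p\big)$, or more symmetrically $]\,\tau(\alpha),\tau(\beta)\,[_{\tilde p} = \tau\big(]\,\beta,\alpha\,[_p\big)$. Combining this with the formula
\[
  \delta_{p}(\alpha,\beta)=\sigma_{p}(]\alpha,\beta]_{p})+{\textstyle\frac{1}{2}}\big(\sigma_{p}(\{\alpha\})-\sigma_{p}(\{\beta\})\big)
\]
from the proof of Lemma~\ref{lemma:color-distance}, together with $\sigma_{\tilde p}\circ\tau = -\sigma_p$ and the interval identity above, one computes
\[
  \delta_{\tilde p}(\tau(\alpha),\tau(\beta)) = -\,\delta_p(\alpha,\beta),
\]
the sign $\tfrac12(\sigma(\{\alpha\})-\sigma(\{\beta\}))$ term picking up the same overall minus as the interval term after the order reversal. (One should double-check the degenerate case $\alpha=\beta$: there $\delta_{\tilde p}(\tau(\alpha),\tau(\alpha)) = \toco(\tilde p) = -\toco(p) = -\delta_p(\alpha,\alpha)$ directly from $\sigma_{\tilde p}\circ\tau=-\sigma_p$ applied to the whole point set.)

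The third step is to read off the six components exactly as in Step~3 of the proof of Lemma~\ref{lemma:Z-rotation}. The blocks of $\tilde p$ are precisely the sets $\tau(B)$ for blocks $B$ of $p$, so $|{\tau(B)}|=|B|$ yields $F(\{\tilde p\})=F(\{p\})$; and $\sigma_{\tilde p}(\tau(B)) = -\sigma_p(B)$ yields $V(\{\tilde p\}) = -V(\{p\})$. Applying $\sigma_{\tilde p}\circ\tau=-\sigma_p$ to the total point set gives $\toco(\tilde p) = -\toco(p)$, hence $\toco(\{\tilde p\}) = -\toco(\{p\})$. Because $\tau$ reverses the cyclic order it still maps subsequent legs of a block to subsequent legs (the relation ``$]\alpha_1,\alpha_2[_p\cap B=\emptyset$'' is symmetric under swapping the two endpoints), and $\sigma_{\tilde p}(\{\tau(\alpha_1),\tau(\alpha_2)\}) = -\sigma_p(\{\alpha_1,\alpha_2\})$ vanishes exactly when the original does, so the ``same normalized color'' versus ``different normalized color'' dichotomy is preserved; with $\delta_{\tilde p}\circ(\tau\times\tau) = -\delta_p$ this gives $L(\{\tilde p\}) = -L(\{p\})$ and $K(\{\tilde p\}) = -K(\{p\})$. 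Finally $\tau$ maps crossing blocks to crossing blocks and every crossing of $\tilde p$ arises this way, so $X(\{\tilde p\}) = -X(\{p\})$. Assembling these gives $Z(\{\tilde p\}) = (F,-V,-\toco,-L,-K,-X)(\{p\})$, as claimed.

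I expect the only genuinely delicate point to be the bookkeeping of signs in Step~2 — in particular verifying that the half-integer correction term in the $\delta_p$ formula flips sign consistently with the interval term under the simultaneous reversal of the cyclic order and inversion of all colors, and handling the equal-endpoints case separately. Everything else is a direct transcription of the argument already given for rotations, with ``$\rho^{-1}$'' replaced by the order-reversing, color-inverting map $\tau$ and an extra global minus sign inserted wherever $\sigma$ or $\delta$ appears.
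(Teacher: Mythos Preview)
Your overall strategy matches the paper's exactly, but Step~2 contains an error that propagates into Step~3. The correct identity is
\[
  \delta_{\tilde p}(\tau(\alpha),\tau(\beta)) = -\,\delta_p(\beta,\alpha),
\]
with the arguments of $\delta_p$ \emph{swapped}, not $-\delta_p(\alpha,\beta)$. (A quick check: take $p$ with lower row $\circ\circ\bullet$ and compute $\delta_{\tilde p}(\tau(\lop{1}),\tau(\lop{2}))=0$ versus $-\delta_p(\lop{1},\lop{2})=-1$.) If you carry out the computation you sketched using $]\tau(\alpha),\tau(\beta)]_{\tilde p}=\tau([\beta,\alpha[_p)$ and $\sigma_{\tilde p}\circ\tau=-\sigma_p$, the right-open interval $[\beta,\alpha[_p$ converts to $]\beta,\alpha]_p$ plus a correction $\sigma_p(\{\beta\})-\sigma_p(\{\alpha\})$, and you land on $-\delta_p(\beta,\alpha)$.

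This matters in Step~3: your parenthetical claim that ``$]\alpha_1,\alpha_2[_p\cap B=\emptyset$ is symmetric under swapping the two endpoints'' is false for blocks with three or more legs. What is true is that $\tau$ sends a subsequent pair $(\alpha_1,\alpha_2)$ in $p$ to the subsequent pair $(\tau(\alpha_2),\tau(\alpha_1))$ in $\tilde p$, i.e.\ the order of subsequence is reversed. The paper then observes that this reversal is exactly compensated by the argument flip in the $\delta$-formula: $\delta_{\tilde p}(\tau(\alpha_2),\tau(\alpha_1))=-\delta_p(\alpha_1,\alpha_2)$, which yields $(L,K)(\{\tilde p\})=(-L,-K)(\{p\})$. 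Your two errors happen to cancel, so the conclusion is right, but the intermediate assertions are not. For $X$ the issue is harmless because crossing is a symmetric relation on blocks, so both orderings of $(\alpha_1,\alpha_2)$ already appear in the defining set.
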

\begin{proof}
  Same procedure as in Lemma~\ref{lemma:Z-rotation}. Let $\kappa$ be the map that reflects the points of $p$.
  \par  
  \textbf{Step~1:}~\emph{Relating $\sigma_{\tilde p}$ to $\sigma_p$}.  Reflection only moves points and does not change their native colors. As it sends upper points to upper points and lower points to lower points, the normalized colors are unaffected too. Verticolor reflection however involves not only reflection but also an inversion of (native and thus also normalized) color. We hence find for all sets of $S$ of points of $\tilde p$:
  \begin{align}
    \label{eq:Z-verticolor-reflection-1}
    \sigma_{p^r}(S)=-\sigma_{\hat p}(S)=-\sigma_p(\kappa^{-1}(S)).
  \end{align}
  \par
  \textbf{Step~2:}~\emph{Relating $\delta_{\tilde p}$ to $\delta_p$}.
The reflection operation reverses the cyclic order. Let $\alpha$ and $\beta$ be arbitrary points of $\tilde p$. Then, $]\alpha,\beta]_{\tilde p}=]\alpha,\beta]_{\hat p}=\kappa([\kappa^{-1}(\beta),\kappa^{-1}(\alpha)[_p)$. Note that the left-open interval has become a right-open one. Hence, by definition of $\delta_p$ and by Equation~\eqref{eq:Z-verticolor-reflection-1},
\begin{align}
\label{eq:Z-verticolor-reflection-2}
  \delta_{\tilde p}(\alpha,\beta)&=\sigma_{\tilde p}(]\alpha,\beta]_{\tilde p})+{\textstyle\frac{1}{2}}(\sigma_{\tilde p}(\{\alpha\})-\sigma_{\tilde p}(\{\beta\}))\notag\\
                                                &=-\sigma_{p}([\kappa^{-1}(\beta),\kappa^{-1}(\alpha)[_{p})-{\textstyle\frac{1}{2}}(\sigma_{p}(\{\kappa^{-1}(\alpha)\})-\sigma_{p}(\{\kappa^{-1}(\beta)\}))\notag\\
    &=-\sigma_p(]\kappa^{-1}(\beta),\kappa^{-1}(\alpha)]_p)-{\textstyle\frac{1}{2}}(\sigma_p(\{\kappa^{-1}(\beta)\})-\sigma_p(\{\kappa^{-1}(\alpha)\}))\notag\\
    &=-\delta_p(\kappa^{-1}(\beta),\kappa^{-1}(\alpha)).
  \end{align}
  In short, $\delta_{\tilde p}$ is is given by flipping the arguments in $\delta_p$ and inverting the sign.
  \par
  \textbf{Step~3:}~\emph{Implications for $Z$.} Still, the blocks of $\tilde p$ are exactly the sets $\kappa(B)$ for blocks of $B$ of $p$. Thus, $F(\{\tilde p\})=F(\{p\})$ is true. Equation~\eqref{eq:Z-verticolor-reflection-1} hence shows  $V(\{\tilde p\})=-V(\{p\})$. It also proves $\toco(\tilde p)=-\toco(p)$, again by choosing for $S$ the total set of points.
  \par
  Given points $\alpha$ and $\beta$ in $p'$, Equation~\eqref{eq:Z-verticolor-reflection-1} shows $\sigma_{\tilde p}(\{\alpha,\beta\})=0$ if and only if $\sigma_p(\{\kappa^{-1}(\alpha),\kappa^{-1}(\beta)\})=0$. Moreover, since $\kappa$ inverts the cyclic order, if $\alpha$ and $\beta$ belong to the same block $B$ of $\tilde p$, then  $]\alpha,\beta[_{\tilde p}\cap B=\kappa(]\kappa^{-1}(\beta),\kappa^{-1}(\alpha)[_p\cap \kappa^{-1}(B))$, where $\kappa^{-1}(\alpha)$ and $\kappa^{-1}(\beta)$ are now legs of the block $\kappa^{-1}(B)$ of $p$. That means, $\kappa$ maps subsequent legs to subsequent legs, it however reverses the order in which they are subsequent. And all pairs of subsequent legs arise in this way. Since the exchange of the order is compensated for by the flip of the arguments in Equation~\eqref{eq:Z-verticolor-reflection-2}, it follows $(L,K)(\{\tilde p\})=(-L,-K)(\{p\})$.
  \par
  And Equation~\eqref{eq:Z-rotation-2} also immediately proves $X(\{\tilde p\})=- X(\{p\})$. Here the flip is irrelevant since being crossing is a symmetric relation on the set of blocks. That is all we needed to see.
\end{proof}

      \subsubsection{Behavior of $Z$ under Tensor Products}
\begin{lemma}
  \label{lemma:Z-tensor-product}
  Let $p,p'\in \mc C$ be arbitrary.
  \begin{enumerate}[label=(\alph*)]
  \item  \label{lemma:Z-tensor-product-1} $F(\{p\otimes p'\})= F(\{p,p'\})$.
  \item  \label{lemma:Z-tensor-product-2} $V(\{p\otimes p'\})= V(\{p,p'\})$.
  \item  \label{lemma:Z-tensor-product-3}   $\toco(p\otimes p')=\toco(p)+\toco(p')$.
  \item  \label{lemma:Z-tensor-product-4}
    $Y(\{p\otimes p'\})\subseteq Y(\{p,p'\})+\gcd(\toco(p),\toco(p'))\integers$ for all $Y\in \{L,K,X\}$.
  \end{enumerate}
\end{lemma}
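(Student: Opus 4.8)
The plan rests on two elementary observations about the tensor product: every block of $p\otimes p'$ is either a block of $p$ or a block of $p'$ (no blocks get merged), and each point keeps its native — hence also its normalized — color. From the first, Part~(a) is immediate, since $F(\mc S)$ is just the union of the sets $F(\{q\})$, $q\in\mc S$. From the second, $\sigma_{p\otimes p'}$ restricts to $\sigma_p$ on $P_p$ and to $\sigma_{p'}$ on $P_{p'}$; this yields Part~(b) (again a union of block–color–sum sets) and, by additivity of $\sigma_{p\otimes p'}$ over $P_{p\otimes p'}=P_p\sqcup P_{p'}$, Part~(c).

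For Part~(d) I would first work out how the orientation of $p\otimes p'$ is assembled from those of $p$ and $p'$. Tracing the counter-clockwise cycle of $p\otimes p'$ shows that its cyclic sequence reads $R_{p,L}$ in native order, then $R_{p',L}$, then $R_{p',U}$ reversed, then $R_{p,U}$ reversed, and back. Consequently $P_{p'}$ is a single consecutive set of $p\otimes p'$ (an interval), its complement $P_p$ is likewise an interval, and the cyclic order of $p\otimes p'$ restricted to $P_p$ (resp. $P_{p'}$) coincides with that of $p$ (resp. $p'$); the degenerate cases where $p$ or $p'$, or one of their rows, is empty are covered by the same bookkeeping. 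The key consequence I would extract is: for distinct $\alpha,\beta\in P_p$, each interval $]\alpha,\beta[_{p\otimes p'}$ or $]\alpha,\beta]_{p\otimes p'}$ equals the interval of the same type taken inside $p$, possibly with all of $P_{p'}$ adjoined — nothing partial can occur, because such an interval begins and ends only at its own limits, which lie in $P_p$, not inside the arc $P_{p'}$. Since normalized colors are unchanged, the definition of $\delta$ selects the same interval type in $p$ and in $p\otimes p'$, so
\[\delta_{p\otimes p'}(\alpha,\beta)\in\{\delta_p(\alpha,\beta),\,\delta_p(\alpha,\beta)+\toco(p')\}\]
for distinct $\alpha,\beta\in P_p$, and symmetrically $\delta_{p\otimes p'}(\alpha,\beta)\in\{\delta_{p'}(\alpha,\beta),\,\delta_{p'}(\alpha,\beta)+\toco(p)\}$ for distinct $\alpha,\beta\in P_{p'}$.

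It then remains to pull the configurations counted by $L$, $K$ and $X$ back from $p\otimes p'$ to $p$ or $p'$. Because $P_p$ and $P_{p'}$ are complementary arcs, no block of $p$ can cross a block of $p'$ in $p\otimes p'$, so each pair of crossing blocks of $p\otimes p'$ lies entirely in $p$ or entirely in $p'$. Likewise, if $\alpha,\beta$ are subsequent legs of a block $B$ of $p\otimes p'$ with $B$ a block of, say, $p$, then $]\alpha,\beta[_p\cap B\subseteq\,]\alpha,\beta[_{p\otimes p'}\cap B=\emptyset$, so $\alpha,\beta$ are already subsequent legs of $B$ in $p$; and the vanishing or non-vanishing of $\sigma(\{\alpha,\beta\})$ is the same in $p$ and in $p\otimes p'$. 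Combining this with the displayed dichotomy, every element of $Y(\{p\otimes p'\})$ for $Y\in\{L,K,X\}$ has the form $\delta_q(\alpha,\beta)+\varepsilon\,\toco(q')$ with $\{q,q'\}=\{p,p'\}$, $\varepsilon\in\{0,1\}$ and $\delta_q(\alpha,\beta)\in Y(\{q\})\subseteq Y(\{p,p'\})$; since $\toco(q')$ is a multiple of $d\eqpd\gcd(\toco(p),\toco(p'))$, this lies in $Y(\{p,p'\})+d\integers$, which is the claim. The only genuinely non-routine step is pinning down the orientation of $p\otimes p'$ and the "all-or-nothing" behaviour of intervals with respect to $P_{p'}$; everything after that is just inheritance of block structure and coloring.
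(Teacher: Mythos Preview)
Your proposal is correct and follows essentially the same route as the paper's proof: both identify $P_p$ and $P_{p'}$ as complementary consecutive arcs in the cyclic order of $p\otimes p'$, derive the dichotomy $\delta_{p\otimes p'}(\alpha,\beta)\in\{\delta_q(\alpha,\beta),\,\delta_q(\alpha,\beta)+\toco(q')\}$ for points in the same factor, and then pull back the $L$-, $K$- and $X$-configurations via the observations that blocks, subsequent legs, and crossing pairs all live entirely within one factor. The paper streamlines the interval case analysis by using the uniform formula $\delta_p(\alpha,\beta)=\sigma_p(]\alpha,\beta]_p)+\tfrac{1}{2}(\sigma_p(\{\alpha\})-\sigma_p(\{\beta\}))$ so that only the half-open interval needs to be tracked, but this is a cosmetic difference.
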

\begin{proof}
  One last time we proceed as in Lemmata~\ref{lemma:Z-rotation} and~\ref{lemma:Z-verticolor-reflection}.
  Let $S_p$ and $S_p'$ be the sets of points in $p\otimes p'$ stemming from $p$ and $p'$ respectively, and let $\tau$ denote the map translating the points of $p'$ to their locations in $S_{p'}$.
  \par
  \textbf{Step~1:}~\emph{Relating $\sigma_{p\otimes p'}$ to $\sigma_p$ and $\sigma_{p'}$.} The tensor product leaves $S_p$ untouched, it only involves a shift of $\tau^{-1}(S_{p'})$ to $S_{p'}$. The native colors are not affected. Since $\tau$ maps upper points to upper points and lower points to lower points, the normalized colors do not  change either. We will be able to confine our considerations to sets $S$ of points with $S\subseteq S_p$ or $S\subseteq S_{p'}$, for which then holds in conclusion:
  \begin{align}
    \label{eq:Z-tensor-product-1}
    \sigma_{p\otimes p'}(S)&=
                             \begin{cases}
                               \sigma_{p}(S)&\text{if } S\subseteq S_p,\\
\sigma_{p'}(\tau^{-1}(S))&\text{if }S\subseteq S_{p'}.
                             \end{cases}
  \end{align}  
  \par
  \textbf{Step~2:}~\emph{Relating $\delta_{p\otimes p'}$ to $\delta_p$ and $\delta_{p'}$.} %Given  $\alpha, \beta$ in $P_{p\otimes p'}$, the definition of $p\otimes p'$ and of the cyclic 
  Again, we are only interested in the color distances of points $\alpha$ and $\beta$ which both belong to $S_p$ or both to $S_{p'}$. Then,
  \begin{align*}
    ]\alpha,\beta]_{p\otimes p'}&=
                                  \begin{cases}
                                    ]\alpha,\beta]_{p}\cup S_{p'}&\text{if }\alpha,\beta\in S_p\text{ and }]\alpha,\beta[_{p\otimes p'}\cap S_{p'}\neq\emptyset,\\
                                    ]\alpha,\beta]_{p}&\text{if }\alpha,\beta\in S_p\text{ and }]\alpha,\beta[_{p\otimes p'}\cap S_{p'}=\emptyset,\\
                                    \tau(]\tau^{-1}(\alpha),\tau^{-1}(\beta)]_{p'})\cup S_p&\text{if }\alpha,\beta\in S_{p'}\text{ and }]\alpha,\beta[_{p\otimes p'}\cap S_{p}\neq \emptyset,\\
                                    \tau(]\tau^{-1}(\alpha),\tau^{-1}(\beta)]_{p'})&\text{if }\alpha,\beta\in S_{p'}\text{ and }]\alpha,\beta[_{p\otimes p'}\cap S_{p}= \emptyset.
                                  \end{cases}
  \end{align*}
  Hence, $\toco(p)=\sigma_p(S_p)$ and $\toco(p')=\sigma_{p'}(\tau^{-1}(S_{p'}))$, Equation~\eqref{eq:Z-tensor-product-1} and the definition of $\delta_{p\otimes p'}$ yield in these cases
  \begin{align*}
    \delta_{p\otimes p'}(\alpha,\beta)&=\sigma_{p\otimes p'}(]\alpha,\beta]_{p\otimes p'})+{\textstyle\frac{1}{2}}(\sigma_{p\otimes p'}(\{\alpha\})-\sigma_{p\otimes p'}(\{\beta\}))\\
    &=                                                              \begin{cases}
                                    \delta_p(\alpha,\beta)+\toco(p')&\text{if }\alpha,\beta\in S_p\text{ and }]\alpha,\beta[_{p\otimes p'}\cap S_{p'}\neq\emptyset,\\
                                    \delta_p(\alpha,\beta)&\text{if }\alpha,\beta\in S_p\text{ and }]\alpha,\beta[_{p\otimes p'}\cap S_{p'}=\emptyset,\\
                                    \delta_{p'}(\tau^{-1}(\alpha),\tau^{-1}(\beta))+\toco(p)&\text{if }\alpha,\beta\in S_{p'}\text{ and }]\alpha,\beta[_{p\otimes p'}\cap S_{p}\neq \emptyset,\\
                                    \delta_{p'}(\tau^{-1}(\alpha),\tau^{-1}(\beta))&\text{if }\alpha,\beta\in S_{p'}\text{ and }]\alpha,\beta[_{p\otimes p'}\cap S_{p}= \emptyset.
                                  \end{cases}
  \end{align*}
  In particular, it follows
  \begin{align}
    \label{eq:Z-tensor-product-2}
    \delta_{p\otimes p'}(\alpha,\beta)\equiv
    \begin{dcases}\begin{rcases}
      \delta_p(\alpha,\beta)& \text{if } \alpha,\beta\in S_p\\
      \delta_{p'}(\tau^{-1}(\alpha),\tau^{-1}(\beta)) &\text{if } \alpha,\beta\in S_{p'}
    \end{rcases}\end{dcases}
\mod \gcd(\toco(p),\toco(p')).
  \end{align}
  \par
  \textbf{Step~3:}~\emph{Implications for $Z$.} The blocks of $p\otimes p'$ are by definition precisely the blocks of $p$ plus the sets $\tau(B')$ for blocks $B'$ of $p'$. This proves Part~\ref{lemma:Z-tensor-product-1}. Especially, $B\subseteq S_p$ or $B\subseteq S_{p'}$ for all blocks $B$ of $p\otimes p'$. Hence, Equation~\eqref{eq:Z-tensor-product-1} shows Part~\ref{lemma:Z-tensor-product-2}. The same equation shows $\toco(p\otimes p')=\sigma_{p\otimes p'}(S_p\cup S_{p'})=\sigma_p(S_p)+\sigma_{p'}(\tau^{-1}(S_{p'}))=\toco(p)+\toco(p')$ and thus Claim~\ref{lemma:Z-tensor-product-3}.
  \par
  Since every block of $B$ of $p'$ satisfies $B\subseteq S_p$ or $B\subseteq S_{p'}$ any two subsequent legs of a block $B$ of $p'$ are also subsequent legs in the block of $p$ or $p'$ where they originate. Moreover, Equation~\eqref{eq:Z-tensor-product-1} shows that their combined color sum in $p\otimes p'$ is identical to the one in $p$ respectively $p'$. Thus, Equation~\eqref{eq:Z-tensor-product-2} proves Part~\ref{lemma:Z-tensor-product-4} for $Y\in \{L,K\}$.
  \par
Lastly, if  $B_1$ and $B_2$ are crossing blocks of $p\otimes p'$, then necessarily $B_1,B_2\subseteq S_p$ or $B_1,B_2\subseteq S_{p'}$ by definition of the tensor product: Indeed, if we suppose, e.g., $B_1\subseteq S_p$ and $B_2\subseteq S_{p'}$, then a crossing of $B_1$ and $B_2$ would mean a crossing between $S_p$ and $S_{p'}$, which is impossible since $S_p$ and $S_{p'}$ are disjoint consecutive sets.  Hence, Equation~\eqref{eq:Z-tensor-product-2} also proves Part~\ref{lemma:Z-tensor-product-4} for $Y=X$, which completes the proof.
\end{proof}

\subsubsection{Behavior of $Z$ under Erasing Turns}
We have not understood yet how $Z$ behaves under erasing turns. However, the effect of the erasing operation on $Z$ is too complicated to treat well abstractly beyond the following simple (but important) observations. 
      \begin{lemma}
        \label{lemma:erasing-tricks}
        Let $p\in \Cp$ be a partition with no upper points and let $T$ be a turn comprising exactly the two rightmost lower points of $p$.
        \begin{enumerate}[label=(\alph*)]
        \item\label{lemma:erasing-tricks-1} Every point of  $E(p,T)$ is also a point of $p$.
        \item\label{lemma:erasing-tricks-2} For every set $S$ of points in $E(p,T)$ holds $\sigma_{E(p,T)}(S)=\sigma_p(S)$.
          \item\label{lemma:erasing-tricks-3} For all points $\gamma_1$ and $\gamma_2$ of $E(p,T)$,
        \begin{align*}
          \delta_{E(p,T)}(\gamma_1,\gamma_2)=\delta_p(\gamma_1,\gamma_2).
        \end{align*}
        \end{enumerate}
      \end{lemma}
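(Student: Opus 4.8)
The plan is to exploit that, since $p$ has no upper points, its orientation is nothing but the native order $\leq_L$ on the lower row $R_L$ closed up into a cycle: writing $n$ for the length of $R_L$, the points occur cyclically as $\lop{1},\lop{2},\ldots,\lop{n}$ and then wrap around from $\lop{n}$ back to $\lop{1}$. The turn $T=\{\lop{(n-1)},\lop{n}\}$ thus sits directly in front of the unique \enquote{gap} of this cyclic order. On the level of point sets the erasing operation merely deletes $T$, so that $P_{E(p,T)}=P_p\setminus T=\{\lop{1},\ldots,\lop{(n-2)}\}$, leaving the native --- and hence the normalized --- color of every surviving point untouched; only the block structure changes, and neither $\sigma$ nor $\delta$ sees the blocks. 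This proves Part~\ref{lemma:erasing-tricks-1} outright, and Part~\ref{lemma:erasing-tricks-2} follows because $\sigma_{E(p,T)}$ and $\sigma_p$ are each defined solely through the normalized coloring, which agrees on $P_p\setminus T$.

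For Part~\ref{lemma:erasing-tricks-3} I would split along the case distinction in the definition of $\delta$. If $\gamma_1=\gamma_2$, then $\delta_{E(p,T)}(\gamma_1,\gamma_2)=\toco(E(p,T))=\sigma_{E(p,T)}(P_p\setminus T)$, which by Part~\ref{lemma:erasing-tricks-2} equals $\sigma_p(P_p\setminus T)=\sigma_p(P_p)-\sigma_p(T)=\toco(p)=\delta_p(\gamma_1,\gamma_2)$, using that $T$, being a turn, is neutral. If $\gamma_1\neq\gamma_2$, both points keep their normalized colors, so the same clause of the definition of $\delta$ is in force in $E(p,T)$ as in $p$, and it suffices to match the relevant color sums: $\sigma_{E(p,T)}(]\gamma_1,\gamma_2[_{E(p,T)})$ with $\sigma_p(]\gamma_1,\gamma_2[_p)$ in the \enquote{different colors} case and $\sigma_{E(p,T)}(]\gamma_1,\gamma_2]_{E(p,T)})$ with $\sigma_p(]\gamma_1,\gamma_2]_p)$ in the \enquote{same colors} case. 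Here I would invoke the elementary observation that, because the two points of $T$ are consecutive and occupy the single gap of the cyclic order between the last and the first surviving point, any cyclic interval with both limits in $P_p\setminus T$ contains either all of $T$ or none of it; concretely $]\gamma_1,\gamma_2[_{E(p,T)}$ is $]\gamma_1,\gamma_2[_p$ or $]\gamma_1,\gamma_2[_p\setminus T$, and the same holds for the half-open interval since $\gamma_2\notin T$. In every case Part~\ref{lemma:erasing-tricks-2} together with $\sigma_p(T)=0$ yields equality of the color sums, whence $\delta_{E(p,T)}(\gamma_1,\gamma_2)=\delta_p(\gamma_1,\gamma_2)$.

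The argument is essentially bookkeeping, and the one spot deserving real care is the description of how a cyclic interval between two surviving points behaves when the rightmost neutral pair is removed --- that it loses exactly $T$ or nothing at all. This is precisely where the two hypotheses enter: \enquote{$p$ has no upper points} and \enquote{$T$ consists of the two rightmost lower points} together force $T$ to fill the wrap-around slot of the cyclic order, so deleting it never splits an interval between two other points nor alters the set of points strictly between them by anything other than $T$.
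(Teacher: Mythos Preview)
Your proposal is correct and follows essentially the same route as the paper: Parts~\ref{lemma:erasing-tricks-1} and~\ref{lemma:erasing-tricks-2} are dispatched immediately by observing that erasing only removes the points of $T$ without touching the remaining colors, and Part~\ref{lemma:erasing-tricks-3} is handled by noting that a cyclic interval between two surviving points either contains all of $T$ or none of it, so that $\sigma_p(T)=0$ absorbs the difference. The paper phrases the dichotomy as ``$\gamma_1$ left of $\gamma_2$'' versus ``otherwise'' and works uniformly with the half-open interval $]\gamma_1,\gamma_2]$ (implicitly via the formula $\delta_p(\alpha,\beta)=\sigma_p(]\alpha,\beta]_p)+\tfrac{1}{2}(\sigma_p(\{\alpha\})-\sigma_p(\{\beta\}))$ from Lemma~\ref{lemma:color-distance}), whereas you split along the clauses of the original definition of $\delta$ and treat the case $\gamma_1=\gamma_2$ explicitly; these are cosmetic differences only.
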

      \begin{proof}
        Since the erasing procedure only affects the points in $T$, Parts~\ref{lemma:erasing-tricks-1} and~\ref{lemma:erasing-tricks-2} are clear. We prove Part~\ref{lemma:erasing-tricks-3}.
        If $\gamma_1$ lies to the left of $\gamma_2$, then $]\gamma_1,\gamma_2]_{E(p,T)}=]\gamma_1,\gamma_2]_p$ and thus $\delta_{E(p,T)}(\gamma_1,\gamma_2)=\delta_{p}(\gamma_1,\gamma_2)$. Otherwise, $]\gamma_1,\gamma_2]_{E(p,T)}=]\gamma_1,\gamma_2]_p\cup T$, which is a disjoint union, implying \(\delta_{p}(\gamma_1,\gamma_2)=\delta_{E(p,T)}(\gamma_1,\gamma_2)+\sigma_p(T)=\delta_{E(p,T)}(\gamma_1,\gamma_2)\) since $\sigma_p(T)=0$. Thus, in any case $\delta_{E(p,T)}(\gamma_1,\gamma_2)=\delta_{p}(\gamma_1,\gamma_2)$.
      \end{proof}
      
      \subsection{Total Color Sum} The first family of elements $Q\in \mathsf Q$ is chosen such that all components except the one for $\toco$ are trivial. That means we restrict the allowed total color sums of partitions.
      \begin{remark}
        \label{remark:categories-1}
        For all $m\in \{0\}\cup \pint$, the set $\mc R_Q$ with $Q=(\pint,\integers,m\integers,\integers,\integers,\integers)$ is exactly \emph{the set of all partitions whose total color sum is a multiple of $m$.}
        \par
        No further constraints on $p\in \Cp$ are imposed by the other components of $Q$ via $Z(\{p\})\leq Q$ since these conditions are satisfied by \emph{any} partition.
      \end{remark}
\begin{lemma}{{\normalfont\cite[Lemma~2.6]{TaWe15a}}}
  \label{lemma:total-color-sum-operations}
    \label{lemma:categories-1}  
    For every $m\in \{0\}\cup\pint$ the set $\mc R_Q$, where \[Q=(\pint,\integers,m\integers,\integers,\integers,\integers),\] is a category of two-colored partitions. 
\end{lemma}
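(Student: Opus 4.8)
The plan is to verify the conditions of the Alternative Characterization (Lemma~\ref{lemma:characterization-categories}) for $\mc R_Q$ with $Q=(\pint,\integers,m\integers,\integers,\integers,\integers)$. As noted in Remark~\ref{remark:categories-1}, this set is simply $\{p\in\Cp\mid \toco(p)\in m\integers\}$, since the five components of $Q$ other than the one for $\toco$ impose no restriction whatsoever ($F(\{p\})\subseteq\pint$, $V(\{p\})\subseteq\integers$, etc.\ hold for every partition). So concretely, we must check: (i) $\PartIdenW\in\mc R_Q$, and (ii) $\mc R_Q$ is closed under tensor products, the four basic rotations, verticolor reflection, and erasing turns.

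First I would dispatch (i): $\PartIdenW$ has total color sum $0\in m\integers$ (its two points carry opposite normalized colors since one is lower and one is upper). For the rotations, Lemma~\ref{lemma:Z-rotation} gives $Z(\{p^r\})=Z(\{p\})$ for $r\in\{\lcurvearrowup,\lcurvearrowdown,\rcurvearrowdown,\rcurvearrowup\}$, so in particular $\toco(p^r)=\toco(p)$; hence $p\in\mc R_Q$ implies $p^r\in\mc R_Q$. For verticolor reflection, Lemma~\ref{lemma:Z-verticolor-reflection} gives $\toco(\tilde p)=-\toco(p)$, and $m\integers$ is closed under negation, so $\mc R_Q$ is stable under $p\mapsto\tilde p$. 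For tensor products, Lemma~\ref{lemma:Z-tensor-product}\ref{lemma:Z-tensor-product-3} gives $\toco(p\otimes p')=\toco(p)+\toco(p')$, and $m\integers$ is closed under addition, so $p,p'\in\mc R_Q$ implies $p\otimes p'\in\mc R_Q$.

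The one operation not covered by the abstract lemmas on $Z$ is erasing turns. Here I would argue directly: if $T$ is a turn in $p$ (a consecutive neutral set of size two), then $\sigma_p(T)=0$ by definition of ``turn'', and erasing $T$ removes exactly those two points while merging some blocks. Since $\sigma_p$ restricted to the complement of $T$ is unchanged under the erasing (the colors of the surviving points are untouched), we get $\toco(E(p,T))=\sigma_p(P_p\setminus T)=\toco(p)-\sigma_p(T)=\toco(p)$. Thus $\toco$ is invariant under erasing turns, and $\mc R_Q$ is closed under this operation as well. Combining everything, Lemma~\ref{lemma:characterization-categories} yields that $\mc R_Q$ is a category.

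I do not expect any genuine obstacle here; this is the ``most convenient'' case alluded to in the strategy paragraph, precisely because all the behavior of $\toco$ under the category operations is either already recorded in Lemmata~\ref{lemma:Z-rotation}--\ref{lemma:Z-tensor-product} or is an immediate one-line computation (erasing turns). The only minor care needed is to confirm that $m\integers$ is a subsemigroup of $(\integers,+)$ closed under negation and containing $0$ — which is trivially true for every $m\in\{0\}\cup\pint$ — so that each closure property of $\mc R_Q$ reduces to the corresponding closure property of the subgroup $m\integers\subseteq\integers$.
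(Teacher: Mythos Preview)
Your proposal is correct and follows essentially the same approach as the paper: both use the Alternative Characterization (Lemma~\ref{lemma:characterization-categories}) and handle rotation, verticolor reflection, and tensor product via Lemmata~\ref{lemma:Z-rotation}--\ref{lemma:Z-tensor-product}. The only cosmetic difference is in the erasing step: the paper first rotates so that $T$ consists of the two rightmost lower points and then invokes Lemma~\ref{lemma:erasing-tricks}~\ref{lemma:erasing-tricks-2}, whereas you argue directly that removing a neutral pair of points leaves $\toco$ unchanged---both arguments amount to the same computation $\toco(E(p,T))=\toco(p)-\sigma_p(T)=\toco(p)$.
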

\begin{proof}
  We use the Alternative Characterization (Lemma~\ref{lemma:characterization-categories}) to show that $\mc R_Q$ is a category. By Remark~\ref{remark:categories-1} it suffices to show that $\toco(\{\,\cdot\,\})\subseteq m\integers$ is stable under the alternative category operations. 
  \par
  \textbf{Rotation:} According to Lemma~\ref{lemma:Z-rotation} for any $r\in \{\rcurvearrowdown,\rcurvearrowup,\lcurvearrowup,\lcurvearrowdown\}$ and $p\in \mc R_Q$ holds $\toco(p^r)=\toco(p)\subseteq m\integers$. Thus,  $\toco(\{\,\cdot\,\})\subseteq m\integers$ is stable under rotations.
  \par
  \textbf{Verticolor Reflection:} Lemma~\ref{lemma:Z-verticolor-reflection} and $-m\integers=m\integers$ show $\toco(\tilde p)=-\toco(\tilde p)\subseteq m\integers$ for all $p\in \mc R_Q$, proving that  $\toco(\{\,\cdot\,\})\subseteq m\integers$ is preserved by verticolor reflection
  \par
  \textbf{Tensor Product:} Given $p, p'\in \mc R_Q$ we can employ Lemma~\hyperref[lemma:Z-tensor-product-3]{\ref*{lemma:Z-tensor-product}~\ref*{lemma:Z-tensor-product-3}} to infer $\toco(p\otimes p')=\toco(p)+\toco(p')$ and thus $\toco(p\otimes p')\in m\integers$ since $\toco(p),\toco(p')\in m\integers$. In conclusion,  $\toco(\{\,\cdot\,\})\subseteq m\integers$ is invariant under tensor products.
  \par
  \textbf{Erasing:} Lastly, it remains to consider $p\in \mc R_Q$ and a turn $T$ in $p$ and to show $\toco(E(p,T))\in m\integers$. Because we already know $\toco(\{\,\cdot\,\})\subseteq m\integers$ is respected by rotations, we can assume that $p$ has no upper points and that $T$ comprises exactly the two rightmost lower points. Let $P_p$ denote the set of all points of $p$. Since $T$ is a turn, $\sigma_p(T)=0$. Hence, by Lemma~\hyperref[lemma:erasing-tricks-2]{\ref*{lemma:erasing-tricks}~\ref*{lemma:erasing-tricks-2}},
     \begin{align*}
       \toco(E(p,T))=\sigma_{E(p,T)}(P_p\backslash T)=\sigma_p(P_p\backslash T)=\sigma_p(P_p)-\sigma_p(T)=\sigma_p(P_p)=\toco(p).
     \end{align*}
  That concludes the proof.
  \end{proof}

  \subsection{Block Size and Color Sum} A second subset of $\mathsf Q$ gathers elements where only the components for $F$ and $V$ are non-trivial. So, we only impose (interdependent) conditions on the sizes and color sums of blocks.

  \begin{remark}
    \label{remark:categories-2}
    \begin{enumerate}[label=(\alph*)]
    \item   For $Q=(\{1,2\},\pm \{0,1,2\},\integers,\integers,\integers,\integers)$ (Part~\ref{lemma:categories-2-1} of the ensuing Lemma~\ref{lemma:categories-2}) the set $\mc R_Q$ is simply $\Cp_{\leq 2}$, \emph{the set of all partitions all of whose blocks have at most two legs}.
      \par
      Only the $F$-condition in $Z(\{\,\cdot\,\})\leq Q$ is important; The apparent $V$-constraint is merely a reflection of the one for $F$: Any singleton block must have color sum  $1$ ($\circ$) or $-1$  ($\bullet$), and a pair block can only ever be neutral ($\circ\bullet$ or $\bullet\circ$) or have color sums $2$ ($\circ\circ$) or  $-2$ ($\bullet\bullet$). And all the remaining conditions imposed by $Z(\{\,\cdot\,\})\leq Q$ are trivially satisfied by \emph{any} partition.
    \item
      If $Q=(\{2\},\pm \{0,2\},\integers,\integers,\integers,\integers)$ (Part~\ref{lemma:categories-2-2}), then $\mc R_Q=\Cp_{2}$ is simply \emph{the set of all pair partitions}. Again, the $V$-condition merely reflects the $F$-constraint.
      \par
           Likewise, the non-trivial value $2\integers$ of the $\toco$-component of $Q$ does \emph{not} impose any restriction on $\mc R_Q$ not already placed by the $F$-constraints: For any partition $p\in \Cp$,
           \begin{align}
             \label{eq:total-color-sum-block-color-sums}
     \toco(p)=\sigma_p\left(\bigcup_{B\text{ block of }p}B\right)=\sum_{B\text{ block of }p}\sigma_p(B).
   \end{align}
   Hence, $\toco(p)=2\integers$ is a necessary consequence of $V(\{p\})\subseteq \pm \{0,2\}$.
   \par
    The conditions induced by the $L$-, $K$- and $X$-components of $Q$ remain redundant.
   \item
     The set $\mc R_Q$ with $Q=(\{2\},\{0\},\{0\},\emptyset,\integers,\integers)$ (Part~\ref{lemma:categories-2-3}) is identical to $\Cppnb$, \emph{the set of all pair partitions with neutral blocks}. (See \cite{MaWe18a} and \cite{MaWe18b} for a classification of all its subcategories.)
     \par
     In contrast to the two previous examples, the constraint on block color sums is \emph{not} merely a consequence of the range of allowed block sizes. The $F$-constraint restricts $\mc R_Q$ to a subset of $\Cpp$ and the $V$-constraint prohibits non-neutral blocks.
     \par
     However, the non-trivial value $\{0\}$ of the $\toco$-component of $Q$ imposes \emph{ no} additional restrictions: $\toco(p)=0$ is, by Equation~\eqref{eq:total-color-sum-block-color-sums}, a necessary consequence of $V(\{p\})=\{0\}$  for any $p\in \Cp$.
   \par
   And also the non-trivial $L$-component  $\emptyset$ of $Q$ provides \emph{no} additional constraints beyond the ones induced by the $F$- and $V$-components: A pair block which is neutral, i.e.\ has normalized coloring $\circ\bullet$ or $\bullet\circ$, cannot have subsequent legs of the same normalized color. Thus, for every $p\in \Cpp$ with $V(\{p\})=\{0\}$ automatically holds $L(\{p\})=\emptyset$.
 \item Lastly, the set $\mc R_Q$ where $Q=(\{1,2\},\pm \{0,1\},\integers,\emptyset,\integers,\integers)$ (Part~\ref{lemma:categories-2-4}) is given exactly by \emph{the set of all partitions all of whose blocks have at most two legs and all of whose pair blocks are neutral}.
   \par
   As said before, restricting $F$ to $\{1,2\}$ reduces the allowed set of block color sums to $\pm \{0,1,2\}$ and the values $-2$ and $2$ can then only stem from pair blocks. Excluding these two values then forces all pair blocks to be neutral. So, as in Part~\ref{lemma:categories-2-3}, the $F$- and $V$-components induce  constraints in their own right.
   \par
   However, the non-trivial value  $\emptyset$ of the $L$-component of $Q$ is merely a reflection of the $F$- and $V$-constraints because neutral pair blocks cannot have subsequent legs of the same normalized color.
    \end{enumerate}
  \end{remark}
  
  \begin{lemma}
     \label{lemma:categories-2}
     The set $\mc R_Q$ is a category of partitions
   \begin{enumerate}[label=(\alph*)]
   \item\label{lemma:categories-2-1} \ldots if $Q=(\{1,2\},\pm \{0,1,2\},\integers,\integers,\integers,\integers)$,
   \item\label{lemma:categories-2-2} \ldots if $Q=(\{2\},\pm \{0,2\},2\integers,\integers,\integers,\integers)$,
   \item\label{lemma:categories-2-3} \ldots if $Q=(\{2\},\{0\},\{0\},\emptyset,\integers,\integers)$, or               
   \item\label{lemma:categories-2-4} \ldots if $Q=(\{1,2\},\pm \{0,1\},\integers,\emptyset,\integers,\integers)$.
   \end{enumerate}     
 \end{lemma}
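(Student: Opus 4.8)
The plan is to apply the Alternative Characterization (Lemma~\ref{lemma:characterization-categories}): for each of the four tuples $Q$ it suffices to check that $\PartIdenW\in\mc R_Q$ and that $\mc R_Q$ is closed under tensor products, the four basic rotations, verticolor reflection and erasing turns. The membership $\PartIdenW\in\mc R_Q$ is immediate in every case since the identity pair partition has one neutral through block of size $2$, total color sum $0$, no subsequent legs of equal normalized color, and no crossings, so $Z(\{\PartIdenW\})=(\{2\},\{0\},\{0\},\emptyset,\emptyset,\emptyset)\leq Q$ for all four $Q$. For the closure properties, the key observation (already prepared in Remark~\ref{remark:categories-2}) is that in each of the four cases the only genuinely active constraints in $Z(\{\,\cdot\,\})\leq Q$ are the ones on $F$ and $V$ — the conditions on $\toco$, $L$, $K$, $X$ are either vacuous (value $\integers$) or are automatic consequences of the $F$- and $V$-constraints via Equation~\eqref{eq:total-color-sum-block-color-sums} and the remark that a neutral pair cannot have two subsequent legs of the same normalized color. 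So the real task reduces to: show that $F(\{\,\cdot\,\})$ and $V(\{\,\cdot\,\})$ landing in the prescribed sets is stable under the five operations.

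First I would dispose of rotations and verticolor reflection abstractly. By Lemma~\ref{lemma:Z-rotation}, $Z$ is rotation-invariant, so $F$ and $V$ (indeed all six components) are literally unchanged under $\lcurvearrowup,\lcurvearrowdown,\rcurvearrowdown,\rcurvearrowup$; hence $\mc R_Q$ is rotation-closed for every $Q$. By Lemma~\ref{lemma:Z-verticolor-reflection}, verticolor reflection leaves $F$ unchanged and negates $V$; since in all four cases the target set for $V$ (one of $\pm\{0,1,2\}$, $\pm\{0,2\}$, $\{0\}$, $\pm\{0,1\}$) is symmetric under negation, $\mc R_Q$ is closed under verticolor reflection. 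Next, tensor products: by Lemma~\ref{lemma:Z-tensor-product}\,\ref{lemma:Z-tensor-product-1}--\ref{lemma:Z-tensor-product-2}, $F(\{p\otimes p'\})=F(\{p,p'\})=F(\{p\})\cup F(\{p'\})$ and likewise $V(\{p\otimes p'\})=V(\{p\})\cup V(\{p'\})$, so if both $p,p'\in\mc R_Q$ then so is $p\otimes p'$; the $\toco$-, $L$-, $K$-, $X$-conditions need no separate check because they are redundant as explained above (alternatively one notes $\toco$ is additive and $L,K,X$ can only grow by multiples of $\gcd(\toco(p),\toco(p'))$, but in these four cases the target sets are $\emptyset$ or $\integers$ or $m\integers$ with the additivity making this trivial).

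The only operation requiring genuine case analysis is erasing turns, and this is where the main work lies. Thanks to rotation-closure we may assume the turn $T$ consists of the two rightmost lower points of a partition $p$ with no upper points. By Lemma~\ref{lemma:erasing-tricks}, erasing does not introduce new points and preserves $\sigma$ and $\delta$; the delicate point is what happens to the block structure. Let $T=\{\alpha,\beta\}$ with $\alpha,\beta$ neighbours and $\sigma_p(T)=0$, so the two legs have inverse native colors. Erasing $T$ merges the (at most two) blocks meeting $T$ into a single block $\tilde B$ and deletes the two legs $\alpha,\beta$. For case~\ref{lemma:categories-2-1} ($F\subseteq\{1,2\}$, $V\subseteq\pm\{0,1,2\}$): one checks that merging cannot create a block of size $\geq 3$ — since $p$ has only blocks of size $1$ or $2$, the blocks meeting $T$ contribute at most $2+2$ legs, minus the $2$ removed legs of $T$ itself, leaving at most $2$; the color sum of $\tilde B$ is $\sigma_p(\tilde B)=\sigma_{E(p,T)}(\tilde B)$, and since the union of the merging blocks had color sum equal to (sum of the two block color sums) while $T$ contributes $0$, the resulting color sum is a sum of at most two values from $\pm\{0,1,2\}$ consistent with a block of size $\leq 2$, hence again in $\pm\{0,1,2\}$. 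For case~\ref{lemma:categories-2-2} ($F=\{2\}$) one argues identically that $\tilde B$ has size exactly $2$: here every block is a pair, the two pairs meeting $T$ together have $4$ legs, removing $T$'s two legs leaves exactly $2$ (the two partners of $\alpha$ and $\beta$), so $\tilde B$ is again a pair and its color sum, being $\sigma_{E(p,T)}$ of those two points, lies in $\pm\{0,2\}$. For cases~\ref{lemma:categories-2-3} and~\ref{lemma:categories-2-4} one additionally tracks neutrality: the two pairs meeting $T$ are each neutral, and when the partners of $\alpha$ and $\beta$ form $\tilde B$, a short computation using $\sigma_p(T)=0$ and additivity of $\sigma_p$ over the union of the two merging blocks forces $\sigma_p(\tilde B)=0$, so $\tilde B$ is neutral and $V(\{E(p,T)\})\subseteq\{0\}$ (resp. $\subseteq\pm\{0,1\}$) is preserved, and the $L$-condition $L(\{E(p,T)\})=\emptyset$ follows automatically from all pairs being neutral. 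The single case to handle with slight care is when $\alpha$ and $\beta$ lie in the \emph{same} block of $p$ (so that block is itself the pair $T$ and the ``merge'' is vacuous): then erasing simply deletes a neutral pair, which trivially preserves all constraints. I expect this final erasing step — in particular the bookkeeping that merging two size-$\leq 2$ blocks across a turn yields again a size-$\leq 2$ block of the right color sum — to be the only place demanding real (though still elementary) verification; everything else is a direct appeal to Lemmata~\ref{lemma:Z-rotation}, \ref{lemma:Z-verticolor-reflection} and~\ref{lemma:Z-tensor-product} together with Remark~\ref{remark:categories-2}.
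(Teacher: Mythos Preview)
Your approach is essentially that of the paper: reduce via Remark~\ref{remark:categories-2} to the $F$- and $V$-constraints, dispatch rotation, verticolor reflection and tensor product via Lemmata~\ref{lemma:Z-rotation}, \ref{lemma:Z-verticolor-reflection}, \ref{lemma:Z-tensor-product}, and then do the block-size/color-sum bookkeeping for the erasing step after rotating $T$ to the two rightmost lower points. The structure and the individual arguments for cases~\ref{lemma:categories-2-1}, \ref{lemma:categories-2-2}, \ref{lemma:categories-2-3} match the paper.

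There is, however, a genuine (if small) gap in your treatment of case~\ref{lemma:categories-2-4} under erasing. You write ``the two pairs meeting $T$ are each neutral'' and conclude $\sigma_p(\tilde B)=0$. But in case~\ref{lemma:categories-2-4} the blocks of $p$ may be singletons, so one of the two blocks meeting $T$ can be a singleton with color sum $\pm 1$. In that situation the merged block $\tilde B$ is itself a singleton (the partner of the other leg) and has $\sigma_{E(p,T)}(\tilde B)=\pm 1$, not $0$. Your asserted equation $\sigma_p(\tilde B)=0$ is therefore false as stated. The conclusion $V(\{E(p,T)\})\subseteq\pm\{0,1\}$ is still correct, but the reasoning needs repair: the paper argues that both merging blocks cannot be singletons (else $\tilde B=\emptyset$), so at least one is a pair, and since $v_Q=\pm\{0,1\}$ forces every pair of $p$ to be neutral, one of $\sigma_p(B_1),\sigma_p(B_2)$ vanishes and hence $\sigma_{E(p,T)}(\tilde B)=\sigma_p(B_1)+\sigma_p(B_2)\in\pm\{0,1\}$. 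You should replace your ``forces $\sigma_p(\tilde B)=0$'' line in case~\ref{lemma:categories-2-4} by this argument.
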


 \begin{proof}
   We treat the four claims largely simultaneously.
   Let $Q$ be one of the four elements named in Claims~\ref{lemma:categories-2-1}--\ref{lemma:categories-2-4} and let $f_Q$ be the first and $v_Q$ the second component of $Q$. Remark~\ref{remark:categories-2} showed that, if $(f_Q,v_Q)=(\{1,2\},\pm \{0,1,2\})$ (Case~\ref{lemma:categories-2-1}) or $(f_Q,v_Q)=(\{2\},\pm \{0,2\})$  (Case~\ref{lemma:categories-2-2}), then
   \begin{align*}
     \mc R_Q=\{p\in \Cp\mid F(\{p\})\subseteq f_Q\},
   \end{align*}
   and that, if $(f_Q,v_Q)=(\{1,2\},\pm \{0,1\})$ (Case~\ref{lemma:categories-2-3}) or $(f_Q,v_Q)=(\{2\},\{0\})$ (Case~\ref{lemma:categories-2-4}), then
   \begin{align*}
      \mc R_Q=\{p\in \Cp\mid (F,V)(\{p\})\leq (f_Q,v_Q)\}.
   \end{align*}
   We use the Alternative Characterization (Lemma~\ref{lemma:characterization-categories}) to show that $\mc R_Q$ is a category. As $\PartIdenW\in \mc R_Q$ is clearly true, that means it suffices to prove that in Cases~\ref{lemma:categories-2-1} and~\ref{lemma:categories-2-2} the condition $F(\{\,\cdot\,\})\subseteq f_Q$ and in Cases~\ref{lemma:categories-2-3} and~\ref{lemma:categories-2-4} the condition $(F,V)(\{\,\cdot\,\})\leq (f_Q,v_Q)$ are stable under rotations, tensor products, verticolor reflection and erasing turns.
   \par
   \textbf{Rotation:} For all $p\in \mc R_Q$ and $r\in \{\rcurvearrowdown,\rcurvearrowup,\lcurvearrowup,\lcurvearrowdown\}$ by Lemma~\ref{lemma:Z-rotation} holds $(F,V)(\{p^r\})=(F,V)(\{p\})\leq (f_Q,v_Q)$. Thus, $(F,V)(\{\,\cdot\,\})\leq (f_Q,v_Q)$ is stable under rotations.
   \par
   \textbf{Verticolor reflection:} Given arbitrary $p\in \mc R_Q$ we are guaranteed by Lemma~\ref{lemma:Z-verticolor-reflection} that $(F,V)(\{p\})=(F,-V)(\{p\})\leq (f_Q,-v_Q)$. Since $-v_Q=v_Q$ this proves  that the constraint $(F,V)(\{\,\cdot\,\})\leq (f_Q,v_Q)$ is preserved by verticolor reflection.
   \par
   \textbf{Tensor product:} According to Lemma~\hyperref[lemma:Z-tensor-product-1]{\ref*{lemma:Z-tensor-product}~\ref*{lemma:Z-tensor-product-1}} and~\ref{lemma:Z-tensor-product-2}, for all partitions $p,p'\in\mc R_Q$ holds $(F,V)(\{p\otimes p'\})=(F,V)(\{p,p'\})\leq (f_Q,v_Q)$. Hence, tensor products respect $(F,V)(\{\,\cdot\,\})\leq (f_Q,v_Q)$. 
    \par
    \textbf{Erasing:}
    Lastly, let $p\in \mc R_Q$ be arbitrary and let $T$ be a turn in $p$. We have to show $F(\{E(p,T)\})\subseteq f_Q$  and, but just in Cases~\ref{lemma:categories-2-3} and~\ref{lemma:categories-2-4}, also $V(\{E(p,T)\})\subseteq v_Q$. Since we already know that $(F,V)(\{\,\cdot\,\})\leq (f_Q,v_Q)$ is invariant under rotations, we can assume that $p$ has no upper points and that $T$ comprises exactly the two rightmost lower points of $p$. 
    \par
    Let $B$ be an arbitrary block of $E(p,T)$. What we need to prove is $|B|\in f_Q$ as well as $\sigma_{E(p,T)}(B)\in v_Q$, the latter however just in Cases~\ref{lemma:categories-2-3} and~\ref{lemma:categories-2-4}. If $B$ is in fact a block of $p$, then $|B|\in f_Q$ and, by Lemma~\hyperref[lemma:erasing-tricks-2]{\ref*{lemma:erasing-tricks}~\ref*{lemma:erasing-tricks-2}}, also $\sigma_{E(p,T)}(B)=\sigma_p(B)\in v_Q$ since we have assumed $p\in \mc R_Q$. Thus, suppose that $B$ is not a block of $p$. Then, there are (not necessarily distinct) blocks $B_1$ and $B_2$ of $p$ with $B_1\cap T,B_2\cap T\neq \emptyset$, with $T\subseteq B_1\cup B_2$ and with $ B=(B_1\cup B_2)\backslash T\neq \emptyset$. 
    \par
    \textbf{Step 1:} \emph{Block size.} We prove $|B|\in f_Q$.  From $|T|=2$ and from $T\subseteq B_1\cup B_2$ follows
   \begin{align}
          \label{eq:categories-2-1}
    |B|=|B_1\cup B_2|- |T|=
     \begin{cases}
       |B_1|+|B_2|-2 &\text{if }B_1\neq B_2,\\
       |B_1|-2&\text{if }B_1=B_2.
     \end{cases}
   \end{align}
   By definition, $f_Q=\{1,2\}$ or $f_Q=\{2\}$. We treat the two cases individually.
   \par
   \textbf{Case~1.1:}~\emph{Parts~\ref{lemma:categories-2-1} and~\ref{lemma:categories-2-4}.} If $f_Q= \{1,2\}$, then in particular $|B_1|,|B_2|\leq 2$. Hence, no matter whether $B_1=B_2$ or $B_1\neq B_2$, Equation~\eqref{eq:categories-2-1} implies $|B|\leq 2$, meaning $|B|\in f_Q$, as was claimed.\par
   \textbf{Case~1.2:}~\emph{Parts~\ref{lemma:categories-2-2} and~\ref{lemma:categories-2-3}.} If we assume $f_Q= \{2\}$ instead, then $|B_1|=|B_2|=2$. Now, $B_1=B_2$ is impossible since otherwise Equation~\eqref{eq:categories-2-1} would imply $|B|=0$,  con\-tra\-dic\-ting $B\neq \emptyset$. Rather, $B_1\neq B_2$ must be true. Then, $|B|=2\in f_Q$ by Equation~\eqref{eq:categories-2-1}.
  \par
\textbf{Step~2:}~\emph{Block color sum.} We show $\sigma_p(B)\in v_Q$ in Cases~\ref{lemma:categories-2-3} and~\ref{lemma:categories-2-4}. Since $\sigma_p(T)=0$ and  $T\subseteq B_1\cup B_2$, we infer by Lemma~\hyperref[lemma:erasing-tricks-2]{\ref*{lemma:erasing-tricks}~\ref*{lemma:erasing-tricks-2}}:
   \begin{align}
          \label{eq:categories-2-2}     
     \sigma_{E(p,T)}(B)&=\sigma_p((B_1\cup B_2)\backslash T)\notag\\
     &=\sigma_p((B_1\cup B_2)\backslash T)+\sigma_p(T)\notag\\%=\sigma_p(((B_1\cup B_2)\backslash T) \cup T)\notag\\
     &=\sigma_p(B_1\cup B_2)\notag\\
     &=
       \begin{cases}
         \sigma_p(B_1)+\sigma_p(B_2) &\text{if }B_1\neq B_2,\\
         \sigma_p(B_1)&\text{if }B_1=B_2.
       \end{cases}
   \end{align}
   Cases~\ref{lemma:categories-2-3} and~\ref{lemma:categories-2-4} correspond to the possibilities $(f_Q,v_Q)=(\{2\},\{0\})$ and  $(f_Q,v_Q)=(\{1,2\},\pm \{0,1,2\})$. We treat them individually.
   \par
   \textbf{Case~2.1:}~\emph{Part~\ref{lemma:categories-2-3}.} If we assume $(f_Q,v_Q)=(\{2\},\{0\})$, i.e., in particular $\sigma_p(B_1)=\sigma_p(B_2)=0$, then, irrespective of whether $B_1=B_2$ or $B_1\neq B_2$, Equation~\eqref{eq:categories-2-2} proves $\sigma_{E(p,T)}(B)=0$.
   \par
      \textbf{Case~2.2:}~\emph{Part~\ref{lemma:categories-2-4}.}
      Now, let $(f_Q,v_Q)=(\{0,1,2\},\pm \{0,1\})$. Then, $B_1$ and $B_2$ are singletons or pair blocks and $\sigma_p(B_1),\sigma_p(B_2)\in \pm \{0,1\}$.  If both $B_1$ and $B_2$ were singletons, then $B_1\cap T,B_2\cap T\neq \emptyset$ and $T\subseteq B_1\cup B_2$ would imply $B_1\cup B_2=T$ and thus the contradiction $B=(B_1\cup B_2)\backslash T=\emptyset$. Hence, we know that at least one of the blocks $B_1$ and $B_2$ must be a pair. The assumptions $p\in \mc R_Q$ and $v_Q=\pm \{0,1\}$ force the pair blocks of $p$ to be neutral (since non-neutral pair blocks have color sums $2$ or $-2$). It follows $\sigma_p(B_1)=0$ or $\sigma_p(B_2)=0$. Now, no matter whether $B_1=B_2$ or $B_1\neq B_2$, Equation~\eqref{eq:categories-2-2} proves $\sigma_{E(p,T)}(B)\in \pm \{0,1\}=v_Q$, which concludes the proof. 
 \end{proof}

\subsection{Color Distances between Legs of the Same Block}
For our third family of elements of $\mathsf Q$ the $L$- and $K$-components are non-trivial, implying constraints on the color distances between subsequent legs of the same block. In part, this translates to a condition on the color distances between \emph{arbitrary} -- not just subsequent -- legs.

\begin{lemma}
  \label{lemma:simplification-of-R}
  Let $m\in \{0\}\cup \pint$ and \[Q=(\pint,\integers,m\integers,m\integers,m\integers,\integers)\] and let $p\in \Cp$ be arbitrary. Then, $Z(\{p\})\leq Q$ if and only if for all blocks $B$ in $p$ and any two legs $\alpha,\beta\in B$ holds $\delta_p(\alpha,\beta)\in m\integers$. 
\end{lemma}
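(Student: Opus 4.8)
The claim is an equivalence between the parameter condition $Z(\{p\})\leq Q$ for the specific tuple $Q=(\pint,\integers,m\integers,m\integers,m\integers,\integers)$ and the uniform statement that $\delta_p(\alpha,\beta)\in m\integers$ for \emph{all} pairs of legs $\alpha,\beta$ in a common block of $p$ (not merely subsequent ones). The plan is to unpack the definition of $Z$ componentwise and observe that the $F$-, $V$- and $X$-components of $Q$ are trivial ($\pint$, $\integers$ and $\integers$ respectively) and hence impose no restriction, while the $\toco$-component $m\integers$ asserts $\toco(p)\in m\integers$ and the $L$- and $K$-components together assert exactly that $\delta_p(\alpha_1,\alpha_2)\in m\integers$ for every pair of \emph{subsequent} legs $\alpha_1\neq\alpha_2$ of a common block (the $L$-case handling same normalized color, the $K$-case different normalized color, so the union covers all subsequent pairs). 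So the condition $Z(\{p\})\leq Q$ is equivalent to: $\toco(p)\in m\integers$ and $\delta_p(\alpha_1,\alpha_2)\in m\integers$ for all subsequent legs $\alpha_1,\alpha_2$ of a common block.

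The forward direction ($Z(\{p\})\leq Q$ implies $\delta_p(\alpha,\beta)\in m\integers$ for all legs of a common block) is the substantive one. Fix a block $B$ and two legs $\alpha,\beta\in B$. If $\alpha=\beta$, then $\delta_p(\alpha,\alpha)=\toco(p)\in m\integers$ by the $\toco$-component. If $\alpha\neq\beta$, list the legs of $B$ in cyclic order starting just after $\alpha$: there is a chain $\alpha=\gamma_0,\gamma_1,\ldots,\gamma_{n}=\beta$ of legs of $B$ in which each consecutive pair $\gamma_{i},\gamma_{i+1}$ is subsequent in $B$ (i.e.\ $]\gamma_i,\gamma_{i+1}[_p\cap B=\emptyset$), walking around the cycle. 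Each step satisfies $\delta_p(\gamma_i,\gamma_{i+1})\in m\integers$ by the $L/K$-condition. Now apply the additivity of $\delta_p$ modulo $\toco(p)$ from Lemma~\ref{lemma:color-distance}\ref{lemma:color-distance-3}: telescoping gives
\[
\delta_p(\alpha,\beta)\equiv \sum_{i=0}^{n-1}\delta_p(\gamma_i,\gamma_{i+1})\mod \toco(p).
\]
Since every summand lies in $m\integers$ and $\toco(p)\in m\integers$, we conclude $\delta_p(\alpha,\beta)\in m\integers$.

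The reverse direction is immediate: if $\delta_p(\alpha,\beta)\in m\integers$ for all legs of a common block, then in particular this holds for subsequent legs (giving the $L$- and $K$-conditions) and, taking $\alpha=\beta$, it gives $\toco(p)=\delta_p(\alpha,\alpha)\in m\integers$ whenever $p$ has at least one point; if $p=\emptyset$ then $\toco(p)=0\in m\integers$ trivially. The $F$-, $V$- and $X$-components of $Q$ are satisfied by any partition. Hence $Z(\{p\})\leq Q$. The only point demanding care is the bookkeeping for the chain of subsequent legs around a block, i.e.\ making precise that consecutive elements in the cyclic listing of $B$ are ``subsequent'' in the sense $]\gamma_i,\gamma_{i+1}[_p\cap B=\emptyset$ and that this chain connects $\alpha$ to $\beta$; once that is in place the telescoping via Lemma~\ref{lemma:color-distance} finishes the argument. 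I expect no genuine obstacle beyond this routine combinatorial setup.
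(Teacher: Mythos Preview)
Your proposal is correct and follows essentially the same route as the paper: both identify that the $F$-, $V$- and $X$-components are vacuous, handle $\alpha=\beta$ via $\delta_p(\alpha,\alpha)=\toco(p)$, and for $\alpha\neq\beta$ interpolate by a chain of subsequent legs of $B$ and telescope using Lemma~\ref{lemma:color-distance}\ref{lemma:color-distance-3} together with $\toco(p)\in m\integers$. Your extra remark about the empty partition in the reverse direction is a small nicety the paper leaves implicit.
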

\begin{proof}
  Suppose  $\delta_p(\alpha,\beta)\in m\integers$ for all blocks $B$ of $p$ and all $\alpha,\beta\in B$. Since we can especially choose $\alpha\neq \beta$ and $]\alpha,\beta[_p\cap B=\emptyset$, it follows $(L,K)(\{p\})\leq (m\integers,m\integers)$. On the other hand, picking $\alpha=\beta$ shows $\toco(p)=\delta_p(\alpha,\alpha)\in m\integers$, which lets us conclude $\toco(\{p\})\subseteq m\integers$. As $(F,V,X)(\{p\})\leq (\pint,\integers,\integers)$ is trivially true, that proves one implication.
  \par
  To show the converse, let $Z(\{p\})\leq Q$ and let $\alpha,\beta$ be legs of the same block $B$ of $p$. If $\alpha=\beta$, then $\delta_p(\alpha,\beta)=\toco(p)\in m\integers$ as $\toco(\{p\})\subseteq m\integers$. Thus, we can assume $\alpha\neq \beta$.  We can further suppose that $]\alpha,\beta[_p\cap B\neq \emptyset$ since otherwise $\delta_p(\alpha,\beta)\in (L\cup K)(\{p\})\subseteq m\integers$. Thus, let $]\alpha,\beta[_p\cap B\neq \emptyset$. Then, $p$ has at least three points. If so, then we find iteratively by moving from $\alpha$ in direction of $\beta$ in accordance with the orientation a number $n\in \pint$ of legs $\gamma_1,\ldots,\gamma_n$ of $B$ such that, writing $\gamma_0\eqpd \alpha$ and $ \gamma_{n+1}\eqpd \beta$, the points $\gamma_0,\ldots,\gamma_{n+1}$ are pairwise distinct, such that $(\gamma_0,\ldots,\gamma_{n+1})$ is ordered and such that $]\alpha,\beta[_p\cap B=\{\gamma_1,\ldots,\gamma_n\}$. Now, Lemma~\hyperref[lemma:color-distance-3]{\ref*{lemma:color-distance}~\ref*{lemma:color-distance-3}} implies $\delta_p(\alpha,\beta)\equiv\sum_{k=0}^n\delta_p(\gamma_k,\gamma_{k+1})\mod m$. Because for every $k\in \{0\}\cup\dwi{n}$  holds  $]\gamma_k,\gamma_{k+1}[\cap B=\emptyset$ and thus $\delta_p(\gamma_k,\gamma_{k+1})\in (L\cup K)(\{p\})\subseteq m\integers$,  that proves the claim.
\end{proof}
\begin{remark}
  \label{remark:categories-3}
  \begin{enumerate}[label=(\alph*)]
  \item For every $m\in \{0\}\cup \pint$, if $Q=(\pint,\integers,m\integers,m\integers,m\integers,\integers)$ (Part~\ref{lemma:categories-3-1} of Lemma~\ref{lemma:categories-3} below), then  $\mc R_Q$ is given by \emph{the set of all partitions such that the color distance between any two legs of the same block is a multiple of $m$}, as seen in Lemma~\ref{lemma:simplification-of-R}.
    Mostly, it is the $L$- and $K$-components of $Q$ inducing the characteristic constraints via $Z(\{\,\cdot\,\})\leq Q$. The $F$-, $V$- and $X$-conditions are redundant. However,  the $\toco$-condition is \emph{not}. And it is not implied by the $L$- and $K$-constraints, either.
  \item If $Q=(\{1,2\},\pm\{0,1,2\},2m\integers,m\!+\!2m\integers,2m\integers,\integers)$ (Part~\ref{lemma:categories-3-2}) for some $m\in \pint$, then the set $\mc R_Q$ is a subset of the set from Part~\ref{lemma:categories-3-1}: Still, a partition $p\in \Cp$ is  required to have color distances in $m\integers$ between any two legs of the same block if it is to be an element of $\mc R_Q$. However, now, additionally, all blocks of $p$ must have size one or two, the total color sum $\toco(p)$ must be a multiple of $2m$ (and not just $m$) and, most importantly, the color distances between subsequent legs of the same block must satisfy two different conditions, depending on their normalized colors. Since blocks can have size two at most, two legs of the same block are subsequent if and only if they are distinct.  Hence, $\mc R_Q$ is \emph{the set of all partitions such that}
    \begin{itemize}[label=$\filledsquare$]
    \item \emph{every block has at most two legs,}
    \item \emph{the total color sum is an \emph{even} multiple of $m$,}
    \item \emph{the color distance between any two \emph{distinct} legs of the same block is}
      \begin{itemize}[label=--]
      \item \emph{an \emph{odd} multiple of $m$ if they have \emph{identical} normalized colors and}
              \item \emph{an \emph{even} multiple of $m$ if they have \emph{different} normalized colors.}
      \end{itemize}
    \end{itemize}
    Also, note that, in contrast to Part~\ref{lemma:categories-3-1}, the parameter $m=0$ is \emph{not} allowed.
    \par
    For points $\alpha$ and $\beta$ in $p\in \Cp$, saying 
    \begin{align*}
      \delta_p(\alpha,\beta)\in \frac{\sigma_p(\{\alpha,\beta\})}{2}m+2m\integers.
    \end{align*}
    is a helpful technical way of expressing that the distance of the points is an odd multiple of $m$ if they have the same normalized color and an even multiple otherwise.
    \par
    The constraints induced via $Z(\{\,\cdot\})\leq Q$ by the $F$-, $V$- and $\toco$-components are non-trivial and \emph{not} implied by the $L$- and $K$-restrictions. %Rather, the former are a necessary precondition to the stability of the latter.
    Of course, the $X$-condition is still redundant.
  \end{enumerate}
\end{remark}

\begin{lemma}
  \label{lemma:categories-3}
  For every $m\in \{0\}\cup \pint$ the set $\mc R_Q$ is a category of partitions
  \begin{enumerate}[label=(\alph*)]
  \item\label{lemma:categories-3-1} \ldots if $Q=(\pint,\integers,m\integers,m\integers,m\integers,\integers)$, or
  \item\label{lemma:categories-3-2} \ldots if $Q=(\{1,2\},\pm\{0,1,2\},2m\integers,m\!+\!2m\integers,2m\integers,\integers)$.
  \end{enumerate}
\end{lemma}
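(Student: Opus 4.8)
The plan is to apply the Alternative Characterization (Lemma~\ref{lemma:characterization-categories}) in both cases. Since $\PartIdenW\in\mc R_Q$ is immediate --- the identity partition is a single neutral pair block whose two legs are cyclic neighbours of different normalized colour, so its total colour sum, its only block colour sum and the colour distance between its two legs all vanish --- it remains to check that $\mc R_Q$ is closed under tensor products, the four basic rotations, verticolor reflection and erasing of turns. Rotation invariance is handed to us by Lemma~\ref{lemma:Z-rotation} ($Z(\{p^r\})=Z(\{p\})$). Verticolor reflection invariance follows from Lemma~\ref{lemma:Z-verticolor-reflection}, since every component of $Q$ is stable under $S\mapsto-S$: $-m\integers=m\integers$, $-2m\integers=2m\integers$, $-(m+2m\integers)=m+2m\integers$ and $-{\pm\{0,1,2\}}=\pm\{0,1,2\}$. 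For tensor products I would use Lemma~\ref{lemma:Z-tensor-product}: the $F$- and $V$-data are unchanged, total colour sums add (so stay in $m\integers$, resp.\ $2m\integers$), and for $Y\in\{L,K\}$ we have $Y(\{p\otimes p'\})\subseteq Y(\{p,p'\})+\gcd(\toco(p),\toco(p'))\integers$; since $\toco(p),\toco(p')$ lie in $m\integers$ (resp.\ $2m\integers$), so does their $\gcd$, whence the right-hand side lies in $m\integers$ in case~\ref{lemma:categories-3-1}, and in $(m+2m\integers)+2m\integers=m+2m\integers$ resp.\ in $2m\integers$ in case~\ref{lemma:categories-3-2}. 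The $X$-component of $Q$ is all of $\integers$ and imposes nothing.

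The real work is closure under erasing a turn. Using the already-established rotation invariance I would reduce to the setting of Lemma~\ref{lemma:erasing-tricks}: $p$ has no upper points and the turn $T$ consists of the two rightmost lower points. By Lemma~\ref{lemma:erasing-tricks}, parts~\ref{lemma:erasing-tricks-2} and~\ref{lemma:erasing-tricks-3}, the surviving points keep their $\sigma$- and $\delta$-values, and $\toco(E(p,T))=\toco(p)$ because $\sigma_p(T)=0$; this settles the total colour sum exactly as in Lemma~\ref{lemma:total-color-sum-operations}. Any block of $E(p,T)$ that is not already a block of $p$ has the form $B=(B_1\cup B_2)\setminus T$ for blocks $B_1\ne B_2$ of $p$ with $B_i\cap T=\{\gamma_i\}$ a singleton, so $|B|=|B_1|+|B_2|-2$; this yields $|B|\in f_Q$ exactly as in the proof of Lemma~\ref{lemma:categories-2} (hence, in case~\ref{lemma:categories-3-2}, also $\sigma_{E(p,T)}(B)\in\pm\{0,1,2\}$), and if $B$ is a singleton there is no colour-distance condition attached to it. So the only remaining point is the colour distance between two legs of such a new block.

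In case~\ref{lemma:categories-3-1} I would first rewrite $\mc R_Q$, via Lemma~\ref{lemma:simplification-of-R}, as the set of $p$ with $\delta_p(\alpha,\beta)\in m\integers$ for any two legs $\alpha,\beta$ of a single block. For legs $\alpha\in B_1$ and $\beta\in B_2$ of $B$, the triangle identity of Lemma~\ref{lemma:color-distance}, part~\ref{lemma:color-distance-3}, gives $\delta_p(\alpha,\beta)\equiv\delta_p(\alpha,\gamma_1)+\delta_p(\gamma_1,\gamma_2)+\delta_p(\gamma_2,\beta)$ modulo $\toco(p)$, hence modulo $m$; the outer summands lie in $m\integers$ by hypothesis (legs of $B_1$, resp.\ $B_2$), and $\delta_p(\gamma_1,\gamma_2)\equiv 0$ modulo $\toco(p)$ because $\gamma_1,\gamma_2$ are cyclic neighbours of different normalized colours ($T$ being neutral), so one of $]\gamma_1,\gamma_2[_p$, $]\gamma_2,\gamma_1[_p$ is empty; thus $\delta_p(\alpha,\beta)\in m\integers$, and it transfers to $E(p,T)$ by Lemma~\ref{lemma:erasing-tricks}, part~\ref{lemma:erasing-tricks-3}. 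Legs both lying in $B_1$ (or both in $B_2$), and the case $B_1=B_2$, are handled directly by the hypothesis.

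For case~\ref{lemma:categories-3-2} the same triangle identity is used, but now I must carry along the finer constraint, conveniently written (as in Remark~\ref{remark:categories-3}) as $\delta_p(\alpha,\beta)\in\tfrac12\sigma_p(\{\alpha,\beta\})m+2m\integers$ for each pair block $\{\alpha,\beta\}$; this target set is invariant under negation, so the ordering of the two legs is irrelevant. A new block $B$ can be a pair only if $B_1=\{\alpha,\gamma_1\}$ and $B_2=\{\beta,\gamma_2\}$ are both pairs. Then, working modulo $2m$ (legitimate since $\toco(p)\in 2m\integers$), the hypothesis applied to $B_1$ and $B_2$ together with $\delta_p(\gamma_1,\gamma_2)\equiv 0$ as above give $\delta_p(\alpha,\beta)\equiv\tfrac12\sigma_p(\{\alpha,\gamma_1\})m+\tfrac12\sigma_p(\{\gamma_2,\beta\})m$; since $\sigma_p(\{\gamma_1\})+\sigma_p(\{\gamma_2\})=\sigma_p(T)=0$, the right-hand side collapses to $\tfrac12\sigma_p(\{\alpha,\beta\})m$, the desired value (an odd multiple of $m$ when $\alpha,\beta$ share a normalized colour, an even one otherwise), which then transports back to $E(p,T)$ by Lemma~\ref{lemma:erasing-tricks}. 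I expect this last paragraph to be the main obstacle: correctly tracking the parity --- odd versus even multiple of $m$ --- of colour distances through the fusion of two pair blocks across a turn is exactly what separates case~\ref{lemma:categories-3-2} from case~\ref{lemma:categories-3-1}, and it hinges on the mildly delicate facts that $\sigma_p$ takes only the values $\pm1$ on singletons and that the erased turn is neutral.
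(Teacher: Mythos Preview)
Your proof is correct and follows essentially the same approach as the paper: Alternative Characterization, with rotation/reflection/tensor product handled via Lemmata~\ref{lemma:Z-rotation}--\ref{lemma:Z-tensor-product}, and the erasing step treated by the same triangle decomposition $\delta_p(\alpha,\beta)\equiv\delta_p(\alpha,\gamma_1)+\delta_p(\gamma_1,\gamma_2)+\delta_p(\gamma_2,\beta)$ through the turn, using Lemma~\ref{lemma:simplification-of-R} in case~\ref{lemma:categories-3-1} and the parity bookkeeping $\delta_p(\alpha,\beta)\in\tfrac12\sigma_p(\{\alpha,\beta\})m+2m\integers$ in case~\ref{lemma:categories-3-2}. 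One minor wording slip: your sentence ``Any block of $E(p,T)$ that is not already a block of $p$ has the form $B=(B_1\cup B_2)\setminus T$ for blocks $B_1\ne B_2$'' is literally false in case~\ref{lemma:categories-3-1} (where a single large block may contain $T$), but you correctly cover this possibility a few lines later under ``the case $B_1=B_2$'', so the argument is complete.
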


\begin{proof}
  The two claims can be verified largely simultaneously. Let $Q$ be one of the tuples given in Claims~\ref{lemma:categories-3-1} and~\ref{lemma:categories-3-2} and abbreviate $Q=(f_Q,\{0\}\cup(\pm f_Q),k_Q,l_Q,k_Q,\integers)$.
  We prove that $\mc R_Q$ is a category by means of the Alternative Characterization (Lemma~\ref{lemma:characterization-categories}). Clearly, $\PartIdenW\in \mc R_Q$. By Lemma~\ref{lemma:categories-1} the condition $\toco(\{\,\cdot\,\})\subseteq k_Q$ is stable under rotation, tensor products, verticolor reflection and erasing turns. The constraint $(F,V)(\{\,\cdot\,\})\leq (f_Q, \{0\}\cup(\pm f_Q))$ is trivially preserved in Case~\ref{lemma:categories-3-1}, and it is preserved in Case~\ref{lemma:categories-3-2}  as well according to Lemma~\hyperref[lemma:categories-2-1]{\ref*{lemma:categories-2}~\ref*{lemma:categories-2-1}}. Hence, it is sufficient to show that $(L,K)(\{\,\cdot\,\})\leq (l_Q,k_Q)$ is invariant under the alternative category operations.
  \par
  \textbf{Rotation:} By Lemma~\ref{lemma:Z-rotation} holds $(L,K)(\{p^r\})=(L,K)(\{p\})\leq (l_Q,k_Q)$ for all $p\in \mc R_Q$ and  $r\in \{\rcurvearrowdown,\rcurvearrowup,\lcurvearrowup,\lcurvearrowdown\}$. Consequently, $(L,K)(\{\,\cdot\,\})\leq (l_Q,k_Q)$ is stable under rotations.
  \par
  \textbf{Verticolor reflection:} Given $p\in \mc R_Q$,  Lemma~\ref{lemma:Z-verticolor-reflection} lets us infer that $(L,K)(\{\tilde p\})=(-L,-K)(\{\tilde p\})\leq (-l_Q,-k_Q)$. Because $-l_Q=l_Q$ and $-k_Q=k_Q$, we have thus proven that verticolor reflection preserves $(L,K)(\{\,\cdot\,\})\leq (l_Q,k_Q)$.
  \par
  \textbf{Tensor product:} Let $p,p'\in \mc R_Q$ be arbitrary. Then, in particular $\toco(p),\toco(p')\in k_Q$. Since $k_Q$ is a subgroup of $\integers$ we conclude $\gcd(\toco(p),\toco(p'))\integers\subseteq  k_Q$. Therefore, Lemma~\hyperref[lemma:Z-tensor-product-4]{\ref*{lemma:Z-tensor-product}~\ref*{lemma:Z-tensor-product-4}} implies $L(\{p\otimes p'\})\subseteq L(\{p,p'\})+\gcd(\toco(p),\toco(p'))\integers\subseteq l_Q+k_Q$ and $K(\{p\otimes p'\})\subseteq K(\{p,p'\})+\gcd(\toco(p),\toco(p'))\integers\subseteq k_Q+k_Q$. Because $l_Q+k_Q\subseteq l_Q$ and $k_Q+k_Q\subseteq k_Q$ by definition of $l_Q$ and $k_Q$, this proves $(L,K)(\{p\otimes p'\})\leq (l_Q,k_Q)$. In conclusion, $(L,K)(\{\,\cdot\,\})\leq (l_Q,k_Q)$ is respected by tensor products.
  \par
  \textbf{Erasing:} Lastly, we let $p\in \mc R_Q$ have a turn $T$ and show $(L,K)(\{E(p,T)\})\leq (l_Q,k_Q)$. Because we have already shown invariance under rotation, we can suppose that $p$ has no upper points and that $T$ comprises exactly the rightmost two lower points of $p$.
  Let $\alpha$ and $\beta$ be legs of the same block $B$ of $E(p,T)$ such that $\alpha\neq \beta$ and such that $]\alpha,\beta[_{E(p,T)}\cap B=\emptyset$. What we then have to prove is that $\delta_{E(p,T)}(\alpha,\beta)\in l_Q$ if $\sigma_{E(p,T)}(\{\alpha,\beta\})\neq 0$ and that $\delta_{E(p,T)}(\alpha,\beta)\in k_Q$ if $\sigma_{E(p,T)}(\{\alpha,\beta\})=0$. By Lemma~\hyperref[lemma:erasing-tricks-2]{\ref*{lemma:erasing-tricks}~\ref*{lemma:erasing-tricks-2}} and~\ref{lemma:erasing-tricks-3} it suffices to show
  \begin{align}
    \label{eq:categories-3-proof-1}
    \delta_p(\alpha,\beta)\overset{!}{\in}
    \begin{cases}
      l_Q&\text{if }\sigma_p(\{\alpha,\beta\})\neq 0,\\
      k_Q&\text{otherwise}.
    \end{cases}
  \end{align}
  To prove \eqref{eq:categories-3-proof-1} we now distinguish the two Cases~\ref{lemma:categories-3-1} and~\ref{lemma:categories-3-2}.
  \par
  \textbf{Case~1:}~\emph{Part~\ref{lemma:categories-3-1}.}
  Since $l_Q=k_Q=m\integers$, Assertion~\eqref{eq:categories-3-proof-1} simplifies to the claim
    \begin{align}
      \label{eq:categories-3-proof-2}
      \delta_p(\alpha,\beta)\overset{!}{\in} m\integers.
    \end{align}
    Because $Q$ is as required by Lemma~\ref{lemma:simplification-of-R}, we can immediately  deduce \eqref{eq:categories-3-proof-2} from $p\in \mc R_Q$ if $\alpha$ and $\beta$ belong to the same block in $p$. Thus, it remains to  assume that the blocks $B_1$ of $\alpha$ and $B_2$ of $\beta$ in $p$ are distinct and prove \eqref{eq:categories-3-proof-2} under this premise.
    \par
    \textbf{Step~1.1:}~\emph{Decomposing $\delta_p(\alpha,\beta)$.} Because $B_1\neq B_2$, the definition of $E(p,T)$ requires the existence of $\alpha'\in B_1\cap T$ and $\beta'\in B_2\cap T$.
Lemma~\hyperref[lemma:color-distance-3]{\ref*{lemma:color-distance}~\ref*{lemma:color-distance-3}} yields
    \begin{align*}
      \delta_p(\alpha,\beta) &\equiv \delta_p(\alpha,\alpha')+\delta_p(\alpha',\beta')+\delta_p(\beta',\beta) \mod m,
    \end{align*}
    where we have used the consequence $\toco(p)\in m\integers$ of $p\in \mc R_Q$. By this congruence, Assertion~\eqref{eq:categories-3-proof-2} can be seen by proving that all summands on the right hand side are multiples of $m$. Hence, that is what we now show.
    \par
    \textbf{Step~1.2:}~\emph{Color distances of $\alpha$ and $\alpha'$ and of $\beta$ and $\beta$'.} Because $\alpha,\alpha'\in B_1$ are legs of the same block of $p$ and because $p\in \mc R_Q$, Lemma~\ref{lemma:simplification-of-R} guarantees $\delta_p(\alpha,\alpha')\in m\integers$. Likewise, $\beta,\beta'\in B_2$ implies $\delta_p(\beta,\beta')\in m\integers$. 
    \par
    \textbf{Step~1.3:}~\emph{Color distance of $\alpha'$ and $\beta'$.} Because $B_1\neq B_2$, necessarily $\alpha'\neq \beta'$ and $T=\{\alpha',\beta'\}$. As $T$ is a turn, the neighboring points  $\alpha'$ and $\beta'$ have opposite normalized colors. We infer $\delta_p(\alpha',\beta')=0$ or $\delta_p(\alpha',\beta')=\toco(p)$, depending on whether $\alpha'$ is left of $\beta'$ or not. As $\toco(p)\in m\integers$ due to $p\in \mc R_Q$, it thus follows $\delta_p(\alpha',\beta')\in m\integers$ in any case. And that is what we needed to see.
    \par
    \textbf{Case~2:}~\emph{Part~\ref{lemma:categories-3-2}.}
    In this case, $l_Q=m\!+\!2m\integers$ and $k_Q=2m\integers$ mean that Assertion~\eqref{eq:categories-3-proof-1} can be expressed equivalently as
    \begin{align}
      \label{eq:categories-3-proof-3}
      \delta_p(\alpha,\beta)\overset{!}{\in} \frac{\sigma_p(\{\alpha,\beta\})}{2}m+2m\integers.
    \end{align}
    If $B$ is a block of $p$, then $p\in \mc R_Q$ implies \eqref{eq:categories-3-proof-3} immediately. Hence, we only need to prove  \eqref{eq:categories-3-proof-3} for the case that $B$ is not a block of $p$.  We again want to use Lemma~\hyperref[lemma:color-distance-3]{\ref*{lemma:color-distance}~\ref*{lemma:color-distance-3}}. Hence, we must find a suitable choice of points $\alpha'$ and $\beta'$ to decompose $\delta_p(\alpha,\beta)$ with.
    \par
    \textbf{Step~2.1:}~\emph{Finding points $\alpha'$ and $\beta'$.}
    As $B$ is not a block of $p$, there exist blocks $B_1$ and $B_2$ of $p$ with $B_1\cap T, B_2\cap T\neq \emptyset$, with $T\subseteq B_1\cup B_2$ and with $B=(B_1\cup B_2)\backslash T$. In particular,  $\alpha,\beta\in B_1\cup B_2$.
    \par
    The points $\alpha$ and $\beta$ belong to different blocks in $p$: If there existed  $i\in \{1,2\}$ with $\alpha,\beta\in B_i$, then it would follow $B_i=\{\alpha,\beta\}$ since $\alpha\neq\beta$ and $|B_i|\leq 2$ by $p\in \Cp_{\leq 2}$. As $\alpha,\beta\notin T$, it would have to hold $B_i=B_i\backslash T=\{\alpha,\beta\}$ and thus $2=|B_i|=|B_i\backslash T|+|B_i\cap T|=2+|B_i\cap T|$. Because we have assumed $B_i\cap T\neq \emptyset$, this would be a contradiction.
    \par
    By renaming $B_1$ and $B_2$ we can assume $\alpha\in B_1$ and $\beta\in B_2$. Since distinct blocks are disjoint, $B_1\cap B_2=\emptyset$. Because $\alpha,\beta\notin T$ and  $B_1\cap T,B_2\cap T\neq \emptyset$ and because $|B_1|,|B_2|\leq 2$, we find $\alpha'\in B_1\cap T$ and $\beta'\in B_2\cap T$ such that $B_1=\{\alpha,\alpha'\}$ and $B_2=\{\beta,\beta'\}$.
    \par
\textbf{Step~2.2:}~\emph{Relating the normalized colors of $\alpha$, $\alpha'$, $\beta$ and $\beta'$.}  Since  $B_1\neq B_2$, since $B=(B_1\cup B_2)\backslash T$ and since $\sigma_p(T)=0$,
    \begin{align}
      \label{eq:categories-3-proof-4}
      \sigma_p(\{\alpha,\beta\})=\sigma_p(B)&=\sigma_p(B_1)+\sigma_p(B_2)=\sigma_p(\{\alpha,\alpha'\})+\sigma_p(\{\beta',\beta\}),
    \end{align}
    as in the proof of Lemma~\ref{lemma:categories-2}.
    \par
\textbf{Step~2.3:}~\emph{Color distances of $\alpha$ and $\alpha'$ and of $\beta$ and $\beta'$.}    The assumption $p\in \mc R_Q$ furthermore guarantees
    \begin{align}
      \label{eq:categories-3-proof-5}
      \delta_p(\alpha,\alpha')\in \frac{\sigma_p(\{\alpha,\alpha'\})}{2}m\!+\!2m\integers\quad\text{and}\quad\delta_p(\beta',\beta)\in \frac{\sigma_p(\{\beta',\beta\})}{2}m\!+\! 2m\integers
    \end{align}
    because $\alpha,\alpha'\in B_\alpha$ and $\alpha\neq \alpha'$ on the one hand and $\beta',\beta\in B_\beta$ and $\beta'\neq \beta$ on the other hand.
    \par
\textbf{Step~2.4:}~\emph{Color distance of $\alpha'$ and $\beta'$.}    Because $T=\{\alpha',\beta'\}$ is a turn, $\delta_p(\alpha',\beta')=0$ or $\delta_p(\alpha',\beta')=\toco(p)$. From $p\in \mc R_Q$ follows $\toco(p)\in k_Q=2m\integers$ and thus
    \begin{align}
      \label{eq:categories-3-proof-6}
      \delta_p(\alpha',\beta')\in 2m\integers.
    \end{align}
    We now have all ingredients to prove  \eqref{eq:categories-3-proof-3}.
    \par
\textbf{Step~2.5:}~\emph{Synthesis.}     Lemma~\hyperref[lemma:color-distance-3]{\ref*{lemma:color-distance}~\ref*{lemma:color-distance-3}}, yields
    \begin{IEEEeqnarray*}{rCl}
      \delta_p(\alpha,\beta) &\equiv& \delta_p(\alpha,\alpha')+\delta_p(\alpha',\beta')+\delta_p(\beta',\beta) \mod 2m,\\
&\overset{\eqref{eq:categories-3-proof-6}}{\equiv}&\delta_p(\alpha,\alpha')+\delta_p(\beta',\beta) \mod 2m.\\
&\overset{\eqref{eq:categories-3-proof-5}}{\equiv}&\frac{\sigma_p(\{\alpha,\alpha'\})}{2}m+\frac{\sigma_p(\{\beta',\beta\})}{2}m \mod 2m\\
&\overset{\eqref{eq:categories-3-proof-4}}{\equiv}&\frac{\sigma_p(\{\alpha,\beta\})}{2}m \mod 2m.                                                           
    \end{IEEEeqnarray*}
    With the proof of \eqref{eq:categories-3-proof-3} thus complete, so is the proof overall.\qedhere
  \end{proof}

\subsection{Color Distances between Legs of Crossing Blocks}
The last family of elements of $\mathsf Q$ we treat exhibits non-trivial $X$-com\-ponents. 
\begin{remark}
  \label{remark:categories-4}
  If $Q=(\pint,\integers,m\integers,m\integers,m\integers,\integers\backslash E)$  for some $m\in \{0\}\cup \pint$ and some $E\subseteq \integers$ with $E=-E=E+m\integers$ (as in  Lemma~\ref{lemma:categories-4} below), we can employ Lemma~\ref{lemma:simplification-of-R} to understand the set $\mc R_Q$. It is given by \emph{the set of all partitions such that the color distance between any two legs of the same block is a multiple of $m$ and such that two points belong to non-crossing blocks whenever the color distance between them is an element of $E$.}
  \par
  While the $F$- and $V$-components of $Q$, of course, effectively induce no conditions at all via $Z(\{\,\cdot\,\})\leq Q$, the $\toco$-, $L$- and $K$-conditions are non-trivial and they are \emph{not} implied by the $X$-constraint. 
\end{remark}

\begin{lemma}
  \label{lemma:categories-4}
  For every $m\in \{0\}\cup \pint$ and  $E\subseteq \integers$ with $E=-E=E+m\integers$ the set $\mc R_Q$ is a category if \[Q=(\pint,\integers,m\integers,m\integers,m\integers,\integers\backslash E).\]
\end{lemma}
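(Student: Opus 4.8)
I would prove this via the Alternative Characterization (Lemma~\ref{lemma:characterization-categories}). Set $Q'\eqpd(\pint,\integers,m\integers,m\integers,m\integers,\integers)$, so that $Q\leq Q'$ and hence $\mc R_Q\subseteq \mc R_{Q'}$ by Lemma~\ref{lemma:R-monotonic-meet-preserving}; since $\mc R_{Q'}$ is already known to be a category (Lemma~\ref{lemma:categories-3}, part~\ref{lemma:categories-3-1}), the five sub-conditions encoded in $Z(\{\,\cdot\,\})\leq Q'$ are automatically stable under all alternative category operations. As $\PartIdenW\in\mc R_Q$ is clear, it therefore suffices to show that the one extra constraint $X(\{\,\cdot\,\})\subseteq \integers\backslash E$ is preserved by rotations, verticolor reflection, tensor products and erasing turns. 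Throughout I would use two arithmetic facts: because $E=-E=E+m\integers$, the set $\integers\backslash E$ is a union of cosets of $m\integers$ which is invariant under negation; and because $\toco(p)\in m\integers$ for every $p\in\mc R_Q$, Lemma~\ref{lemma:color-distance}, part~\ref{lemma:color-distance-3}, upgrades congruences modulo $\toco(p)$ to congruences modulo $m$.

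For the three ``easy'' operations this is immediate. Rotations: $X(\{p^r\})=X(\{p\})$ by Lemma~\ref{lemma:Z-rotation}. Verticolor reflection: Lemma~\ref{lemma:Z-verticolor-reflection} gives $X(\{\tilde p\})=-X(\{p\})\subseteq -(\integers\backslash E)=\integers\backslash E$, using $E=-E$. Tensor products: for $p,p'\in\mc R_Q$ one has $\toco(p),\toco(p')\in m\integers$, hence $\gcd(\toco(p),\toco(p'))\integers\subseteq m\integers$, so Lemma~\ref{lemma:Z-tensor-product}, part~\ref{lemma:Z-tensor-product-4}, yields $X(\{p\otimes p'\})\subseteq X(\{p,p'\})+m\integers\subseteq(\integers\backslash E)+m\integers=\integers\backslash E$.

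The substance is the erasing step. As in the earlier proofs I would invoke the stability already established under rotations to reduce to the case that $p\in\mc R_Q$ has no upper points and that the turn $T=\{\tau_1,\tau_2\}$ consists of the two rightmost (hence neighboring) lower points. By Lemma~\ref{lemma:erasing-tricks}, parts~\ref{lemma:erasing-tricks-2} and~\ref{lemma:erasing-tricks-3}, the points, color sums and color distances of $E(p,T)$ agree with those of $p$; and since $E(p,T)\in\mc R_{Q'}$, Lemma~\ref{lemma:simplification-of-R} gives $\delta_p(\gamma_1,\gamma_2)\in m\integers$ whenever $\gamma_1,\gamma_2$ are legs of one block of $E(p,T)$. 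Now let $B_1,B_2$ be crossing blocks of $E(p,T)$ with $\alpha_1\in B_1$, $\alpha_2\in B_2$; I must show $\delta_p(\alpha_1,\alpha_2)\notin E$. Write $B_1^T,B_2^T$ for the (one or two) blocks of $p$ meeting $T$ and $B^\ast\eqpd(B_1^T\cup B_2^T)\backslash T$ for the merged block. If neither $B_1$ nor $B_2$ equals $B^\ast$, both are blocks of $p$ and the crossing witness, avoiding $T$, remains ordered in $p$, so $\delta_p(\alpha_1,\alpha_2)\in X(\{p\})\subseteq\integers\backslash E$ directly; the same works at once when $B_1^T=B_2^T$. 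The delicate configuration is $B_1^T\neq B_2^T$ with, say, $B_1=B^\ast$. Here I would first show that some $C\in\{B_1^T,B_2^T\}$ crosses $B_2$ already in $p$: taking a crossing witness $(\alpha,\alpha',\beta,\beta')$ of $B^\ast$ against $B_2$ with $\alpha,\beta\in B^\ast$, if $\alpha,\beta$ lie in the same $B_i^T$ the same witness exhibits the crossing of $B_i^T$ and $B_2$; if they lie in different ones, the point is that $\tau_1,\tau_2$, being neighbors of $p$ distinct from the four witness points, occupy a single arc of the cyclic order $(\alpha,\alpha',\beta,\beta')$, and checking each of the four arcs shows that $\{\alpha',\beta'\}$ together with $\{\alpha,\tau_1\}\subseteq B_1^T$ or with $\{\beta,\tau_2\}\subseteq B_2^T$ forms an ordered alternating quadruple, so $B_2$ crosses $B_1^T$ or $B_2^T$ in $p$. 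With $C$ fixed, let $\tau\in C\cap T$, let $D\in\{B_1^T,B_2^T\}$ be the block of $p$ containing $\alpha_1$, and let $\tau'\in D\cap T$. Then I conclude by the chain
\[
\delta_p(\alpha_1,\alpha_2)\equiv\delta_p(\alpha_1,\tau')+\delta_p(\tau',\tau)+\delta_p(\tau,\alpha_2)\equiv-\delta_p(\alpha_2,\tau)\mod m,
\]
since $\delta_p(\alpha_1,\tau')\in m\integers$ ($\alpha_1,\tau'$ are legs of the block $D$ of $p$), $\delta_p(\tau',\tau)\in m\integers$ (it equals $\toco(p)$ if $\tau'=\tau$ or $(\tau',\tau)=(\tau_2,\tau_1)$, and $0$ if $(\tau',\tau)=(\tau_1,\tau_2)$, because $T$ is a turn of neighboring points), and $\delta_p(\alpha_2,\tau)\in X(\{p\})\subseteq\integers\backslash E$ because $C$ crosses $B_2$ in $p$. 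As $\integers\backslash E$ is invariant under negation and under adding multiples of $m$, this gives $\delta_p(\alpha_1,\alpha_2)\in\integers\backslash E$, finishing the erasing step.

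The main obstacle I anticipate is the arc-case analysis in the erasing step: one has to verify, for each of the four possible positions of $\{\tau_1,\tau_2\}$ relative to a crossing witness of the merged block $B^\ast$, that the crossing is inherited by one of the constituent blocks $B_1^T$ or $B_2^T$ of $p$, which is exactly the place where the defining ``neighbor'' property of a turn is indispensable.
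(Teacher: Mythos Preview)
Your proposal is correct and follows essentially the same route as the paper: reduce to the $X$-condition via Lemma~\ref{lemma:categories-3}~\ref{lemma:categories-3-1}, handle rotation, verticolor reflection and tensor products with Lemmata~\ref{lemma:Z-rotation}--\ref{lemma:Z-tensor-product} and the hypotheses $E=-E=E+m\integers$, and for erasing show that one of the constituent blocks $B_1^T,B_2^T$ already crosses $B_2$ in $p$ and then transport $\delta_p(\alpha_1,\alpha_2)$ modulo $m$ to an element of $X(\{p\})$ via the additivity of $\delta_p$. The paper organizes the erasing step in the opposite order (first the congruence $\delta_p(\alpha_1,\alpha_2)\equiv\delta_p(\beta_1,\beta_2)\bmod m$ for all $\beta_1\in B_{1,1}\cup B_{1,2}$, $\beta_2\in B_2$, then the crossing) and records the arc analysis as an explicit four-row table, but the content is the same; just be careful that your labeling of $\tau_1,\tau_2$ is consistent with which of $B_1^T,B_2^T$ they lie in when you write down the alternating quadruples.
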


\begin{proof}
  Let $Q$ be of the kind described in the claim. Once more, we use the Alternative Characterization (Lemma~\ref{lemma:characterization-categories}) to show that $\mc R_Q$ is a category. By Lem\-ma~\hyperref[lemma:categories-3-1]{\ref*{lemma:categories-3}~\ref*{lemma:categories-3-1}} it suffices to consider the $X$-component of $Z$, i.e.\ to prove that the constraint $X(\{\,\cdot\,\})\subseteq \integers\backslash E$ is invariant under rotation, tensor products, verticolor reflection and erasing turns.
  \par
  \textbf{Rotation:} With the help of Lemma~\ref{lemma:Z-rotation} we can infer $X(\{p^r\})=X(\{p\})\subseteq \integers\backslash E$ for all $p\in \mc R_Q$ and all $r\in \{\rcurvearrowdown,\rcurvearrowup,\lcurvearrowup,\lcurvearrowdown\}$. Thus, $X(\{\,\cdot\,\})\subseteq \integers\backslash E$ is preserved by rotations.
  \par
  \textbf{Verticolor reflection:} For all $p\in \mc R_Q$ holds $X(\{\tilde p\})=-X(\{p\})\subseteq -\integers\backslash E$ by Lemma~\ref{lemma:Z-verticolor-reflection}. Since our assumption $E=-E$ implies $-\integers\backslash E=\integers\backslash E$, this proves the condition $X(\{\,\cdot\,\})\subseteq \integers\backslash E$ stable under verticolor reflection.
  \par
  \textbf{Tensor product:} Let $p,p'\in \mc R_Q$ be arbitrary. Then, $\toco(p),\toco(p')\in m\integers$ implies $\gcd(\toco(p),\toco(p))\integers\subseteq m\integers$. Consequently, Lemma~\hyperref[lemma:Z-tensor-product-4]{\ref*{lemma:Z-tensor-product}~\ref*{lemma:Z-tensor-product-4}} yields $X(\{p\otimes p'\})\subseteq X(\{p,p'\})+\gcd(\toco(p),\toco(p'))\integers\subseteq \integers\backslash E+m\integers$. As we have assumed $E=E+m\integers$, also $\integers\backslash E=\integers\backslash E+m\integers$ and thus $X(\{p\otimes p'\}) \subseteq \integers\backslash E$.  In conclusion, tensor products respect  $X(\{\,\cdot\,\})\subseteq \integers\backslash E$.
  \par
  \textbf{Erasing:} Let $p\in \mc R_Q$ have a turn $T$. We show $X(\{E(p,T)\})\subseteq \integers\backslash E$. Since $X(\{\,\cdot\,\})\subseteq \integers\backslash E$ is preserved by rotations, no generality is lost assuming that $p$ has no upper points and that the two rightmost lower points of $p$ make up $T$.
  Let $\alpha_1$ and $\alpha_2$ be points in $E(p,T)$ whose blocks $B_1$ and $B_2$ cross in $E(p,T)$. By Lemma~\hyperref[lemma:erasing-tricks-3]{\ref*{lemma:erasing-tricks}~\ref*{lemma:erasing-tricks-3}} all we have to show is
  \begin{align}
    \label{eq:categories-4-proof-1}
    \delta_p(\alpha_1,\alpha_2)\notin E.
  \end{align}
  If $B_1$ and $B_2$ are both blocks of $p$ as well, then \eqref{eq:categories-4-proof-1} is true by the assumption $p\in \mc R_Q$. Thus, we only need to show \eqref{eq:categories-4-proof-1} in the opposite case. If so, then, by nature of the erasing operation, exactly one of the blocks $B_1$ and $B_2$ is also a block of $p$.
  \par
  \textbf{Step~1:}~\emph{Reduction to the case that $B_1$ is not a block of $p$.} The assumption $p\in \mc R_Q$ ensures $\toco(p)\in m\integers$. Hence,  Lemma~\hyperref[lemma:color-distance-2]{\ref*{lemma:color-distance}~\ref*{lemma:color-distance-2}} implies  $\delta_{p}(\alpha_1,\alpha_2)\equiv-\delta_{p}(\alpha_2,\alpha_1)\mod m$.  Because we assume $\integers\backslash E=-(\integers\backslash E)+m\integers$, that means $\delta_{p}(\alpha_1,\alpha_2)\in \integers\backslash E$ if and only if $\delta_{p}(\alpha_2,\alpha_1)\in\integers\backslash E$. Hence, it suffices to prove \eqref{eq:categories-4-proof-1} for the case that $B_1$ is not a block of $p$.
  \par
  \textbf{Step~2:}~\emph{Expressing $\delta_p(\alpha_1,\alpha_2)$ as the color distance between some blocks $B_{1,i}$ and $B_2$ of $p$.}    Since $B_1$ is not a block of $p$, there are (not necessarily distinct) blocks $B_{1,1}$ and $B_{1,2}$ of $p$ with $B_{1,1}\cap T,B_{1,2}\cap T\neq \emptyset$ with $T\subseteq B_{1,1}\cup B_{1,2}$ and with $B_1=(B_{1,1}\cup B_{1,2})\backslash T$. By renaming if necessary, we can always achieve $\alpha_1\in B_{1,1}$.  We show that, for all $\beta_1\in B_{1,1}\cup B_{1,2}$ and all $\beta_2\in B_2$,
  \begin{align}
    \label{eq:categories-4-proof-2}
    \delta_p(\alpha_1,\alpha_2)\equiv \delta_p(\beta_1,\beta_2)\mod m.
  \end{align}
  Two cases must be distinguished.
  \par
  \textbf{Case~2.1:}~\emph{$\beta_1$ and $\alpha_1$ belong to the same block.} Let $\beta_1\in B_{1,1}$.
  By Lemma~\hyperref[lemma:color-distance-3]{\ref*{lemma:color-distance}~\ref*{lemma:color-distance-3}},
  \begin{align}
    \label{eq:categories-4-proof-3}
    \delta_p(\alpha_1,\alpha_2)\equiv \delta_p(\alpha_1,\beta_1)+\delta_p(\beta_1,\beta_2)+\delta_p(\beta_2,\alpha_2)\mod m.
  \end{align}
  According to Lemma~\ref{lemma:simplification-of-R} the assumption
   $p\in \mc R_Q$ implies $\delta_p(\alpha_1,\beta_1),\delta_p(\beta_2,\alpha_2)\in m\integers$ since $\alpha_1,\beta_1\in B_{1,1}$ and $\beta_2,\alpha_2\in B_2$. Hence, the only term on the right side of \eqref{eq:categories-4-proof-3} possibly surviving  is $\delta_p(\beta_1,\beta_2)$. That proves \eqref{eq:categories-4-proof-2} in this case.
  \par
  \textbf{Case~2.2:}~\emph{$\beta_1$ and $\alpha_1$ belong to different blocks.} Now, suppose $\beta_1\in B_{1,2}$ instead.  Since we have assumed $B_{1,1}\cap T,B_{1,2}\cap T\neq \emptyset$ we can infer the existence of $\gamma_{1,1}\in B_{1,1}$ and $\gamma_{1,2}\in B_{1,2}$ with $\gamma_{1,1}\neq \gamma_{1,2}$ and $T=\{\gamma_{1,1},\gamma_{1,2}\}$. Again, we use Lemma~\hyperref[lemma:color-distance-3]{\ref*{lemma:color-distance}~\ref*{lemma:color-distance-3}} to derive
  \begin{IEEEeqnarray}{rCl}
    \label{eq:categories-4-proof-4}
    \delta_p(\alpha_1,\alpha_2)&\equiv& \delta_p(\alpha_1,\gamma_{1,1})+\delta_p(\gamma_{1,1},\gamma_{1,2})+\delta_p(\gamma_{1,2},\beta_1)\nonumber\\
    &&+\delta_p(\beta_1,\beta_2)+\delta_p(\beta_2,\alpha_2) \mod m.
  \end{IEEEeqnarray}
  All summands on the right hand side of \eqref{eq:categories-4-proof-4} except for possibly $\delta_p(\beta_1,\beta_2)$ are multiples of $m$:
  \par
  Because $T$ is a turn, $\gamma_{1,1}$ and $\gamma_{1,2}$ are neighbors of different normalized colors, meaning $\delta_p(\gamma_{1,1},\gamma_{1,2})=0$ or $\delta_p(\gamma_{1,1},\gamma_{1,2})=\toco(p)\in m\integers$ and thus $\delta_p(\gamma_{1,1},\gamma_{1,2})\in m\integers$ in any case.
  \par
  And, thanks to $p\in \mc R_Q$, from $\alpha_1,\gamma_{1,1}\in B_{1,1}$, from $\gamma_{1,2},\beta_1\in B_{1,2}$ and from $\beta_2,\alpha_2\in B_2$ follow $\delta_p(\alpha_1,\gamma_{1,1}),\delta_p(\gamma_{1,2},\beta_1),\delta_p(\beta_2,\alpha_2)\in m\integers$ according to Lemma~\ref{lemma:simplification-of-R}.
  \par
Thus  \eqref{eq:categories-4-proof-4} verifies \eqref{eq:categories-4-proof-2} in this case.
  \par
  \textbf{Step~3:}~\emph{Showing that $B_{1,i}$ and $B_2$ cross in $p$.}
  If we can establish that $B_{1,1}$ and $B_2$ or $B_{1,2}$ and $B_2$ cross each other in $p$, then Equation~\eqref{eq:categories-4-proof-2} proves \eqref{eq:categories-4-proof-1} as $\integers\backslash E=(\integers\backslash E)+m\integers$. So, this is all we have to show.
  \par

  Because $B_1$ and $B_2$ cross in $E(p,T)$ we find points $\epsilon_{1,1},\epsilon_{1,2}\in B_1$ and $\eta_2,\theta_2\in B_2$ such that $(\epsilon_{1,1},\eta_2,\epsilon_{1,2},\theta_2)$ is ordered in $E(p,T)$ (and thus also in $p$). If $\epsilon_{1,1},\epsilon_{1,2}\in B_{1,1}\cup B_{1,2}$ both belong to $B_{1,1}$ or both to $B_{1,2}$, then we have already found the desired crossing with $B_2$. Hence, we can assume that neither is the case, i.e., that $B_{1,1}\neq B_{1,2}$ and that $\epsilon_{1,1}$ and $\epsilon_{1,2}$ belong to different blocks in $p$. If $\epsilon_{1,1}\notin B_{1,1}$, then we rename $\epsilon_{1,1}\leftrightarrow\epsilon_{1,2}$ and $\eta_2\leftrightarrow\theta_2$. Thus, we can achieve that  $\epsilon_{1,1}\in B_{1,1}$ and $\epsilon_{1,2}\in B_{1,2}$ while maintaining  $\eta_2,\theta_2 \in B_2$ and while keeping $(\epsilon_{1,1},\eta_2,\epsilon_{1,2},\theta_2)$ ordered.
  \par
Let  $\gamma_{1,1}$ and $\gamma_{1,2}$ be as before, i.e., $\gamma_{1,1}\in B_{1,1}\cap T$ and $\gamma_{1,2}\in B_{1,2}\cap T$. Since they are neighbors, they have the same position in the cyclic order of $p$ with respect to the tuple $(\epsilon_{1,1},\eta_2,\epsilon_{1,2},\theta_2)$. Let $i\in \{1,2\}$ be such that $\gamma_{1,i}$ lies left of $\gamma_{1,\neg i}$ (in the native ordering). Then, the below table shows that, no matter how the points are arranged, we can find a crossing between $B_{1,1}$ and $B_2$ or between $B_{1,2}$ and $B_2$:
  \begin{gather*}
    \begin{matrix}
      \text{ordered tuple} & \text{crossing}& \text{between}\\
 (\epsilon_{1,1},\eta_2,\epsilon_{1,2},\theta_2,\underline{\gamma_{1,i},\gamma_{1,\neg i}}) &  (\epsilon_{1,1},\eta_2,\gamma_{1,1},\theta_2) & B_{1,1}\text{ and }B_2 \\
(\epsilon_{1,1},\eta_2,\epsilon_{1,2},\underline{\gamma_{1,i},\gamma_{1,\neg i}},\theta_2) &  (\epsilon_{1,1},\eta_2,\gamma_{1,1},\theta_2) & B_{1,1}\text{ and }B_2\\
(\epsilon_{1,1},\eta_2,\underline{\gamma_{1,i},\gamma_{1,\neg i}},\epsilon_{1,2},\theta_2) & (\epsilon_{1,1},\eta_2,\gamma_{1,1},\theta_2)& B_{1,1}\text{ and }B_2\\
(\epsilon_{1,1},\underline{\gamma_{1,i},\gamma_{1,\neg i}},\eta_2,\epsilon_{1,2},\theta_2) & (\gamma_{1,2},\eta_2,\epsilon_{1,2},\theta_2)& B_{1,2}\text{ and }B_2
    \end{matrix}
  \end{gather*}
  So, in combination with what we showed in Step~2, that completes the proof.
\end{proof}
%\vspace{-4em}
\subsection{Synthesis}
As announced we can combine the previous results to prove that all sets defined in Section~\ref{section:definition-of-the-categories} are categories. Recall Notation~\ref{integer-notations}.
\begin{theorem}
  \label{theorem:main}
  For every $Q\in \mathsf Q$ the set $\mc R_Q$ is a non-hyperoctahedral category: %$\in \nhoc$.
   \begin{align*}
     \mc R: \, \mathsf Q\to \nhoc.
  \end{align*}
  \end{theorem}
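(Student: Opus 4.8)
The plan is to follow exactly the program laid out at the start of Section~\ref{section:proof-main-theorem}: verify directly that a handful of \enquote{convenient} tuples $Q\in\mathsf Q$ give categories $\mc R_Q$ (this has already been accomplished in Lemmata~\ref{lemma:categories-1}, \ref{lemma:categories-2}, \ref{lemma:categories-3} and~\ref{lemma:categories-4}), and then express every remaining $Q\in\mathsf Q$ as a meet $\bigcap_\times \mathsf Q'$ of such convenient tuples inside the complete lattice $\mathsf L$. By Lemma~\ref{lemma:R-monotonic-meet-preserving} the map $\mc R$ preserves meets, so $\mc R_Q=\bigcap\{\mc R_{Q'}\mid Q'\in\mathsf Q'\}$; since an arbitrary intersection of categories is again a category (this is immediate from the original definition of category, or from the Alternative Characterization Lemma~\ref{lemma:characterization-categories}, as all defining closure properties are preserved under intersection), $\mc R_Q$ is a category. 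Finally one checks non-hyperoctahedrality: for each $Q$ one exhibits either $\PartSinglesWBTensor\in\mc R_Q$ or $\PartFourWBWB\notin\mc R_Q$, which is a short case inspection — whenever $1\notin f$ the partition $\PartFourWBWB$, which has a four-element block, fails $F(\{\cdot\})\subseteq f$; and whenever $1\in f$ one observes that $s$, $l$, $k$, $x$ are all chosen so that the two singletons tensored together lie in $\mc R_Q$, because $\PartSinglesWBTensor$ has no subsequent-leg pairs and no crossings and total color sum $0$.

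\textbf{The combinatorial heart: decomposing each $Q$ as a meet.}
The real work is writing, for each of the fourteen rows of the table defining $\mathsf Q$ (Definition~\ref{definition:Q}), an explicit finite family of convenient tuples whose componentwise intersection is $Q$. Here one exploits that the meet in $\mathsf L$ is componentwise intersection of subsets of $\integers$, together with the defining identities $D_m=(D\cup(m-D))+m\integers$ and $E_0=E\cup(-E)\cup\{0\}+0\integers=\pm E\cup\{0\}$ from Notation~\ref{integer-notations}. For instance, a row with parameters $(\{2\},\pm\{0,2\},2um\integers,m\integers,m\integers,\integers)$ is the meet of the pair-partition category $(\{2\},\pm\{0,2\},2\integers,\integers,\integers,\integers)$ from Lemma~\hyperref[lemma:categories-2-2]{\ref*{lemma:categories-2}~\ref*{lemma:categories-2-2}}, the total-color-sum category $(\pint,\integers,2um\integers,\integers,\integers,\integers)$ from Lemma~\ref{lemma:categories-1}, and the same-block category $(\pint,\integers,m\integers,m\integers,m\integers,\integers)$ from Lemma~\hyperref[lemma:categories-3-1]{\ref*{lemma:categories-3}~\ref*{lemma:categories-3-1}}; one must check that the intersection of the $\toco$-components $2\integers\cap 2um\integers\cap m\integers$ really equals $2um\integers$ (using $u\in\{0\}\cup\pint$, $m\in\pint$, so $\lcm(2,2um,m)=2um$ when $u\geq1$, and $=0$, i.e.\ $\{0\}$, when $u=0$ — both cases give $2um\integers$), and similarly for the other five coordinates. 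The rows with an $X$-component of the form $\integers\backslash D_m$ or $\integers\backslash N_0$ require additionally one factor of the crossing-type categories from Lemma~\ref{lemma:categories-4}, whose hypothesis $E=-E=E+m\integers$ must be verified for the relevant $E$ (e.g.\ $E=D_m$ satisfies $D_m=-D_m$ by the symmetrization $D\cup(m-D)$, and $D_m=D_m+m\integers$ by construction; for $E=N_0$ with $N$ a subsemigroup of $(\pint,+)$ one uses $m=0$, so the conditions $E=-E$ and $E=E+0\integers$ reduce to $N_0=-N_0$, which holds since $N_0=N\cup(-N)\cup\{0\}$). The rows with $f=\{1,2\}$ additionally pick up the factor $(\{1,2\},\pm\{0,1,2\},\integers,\integers,\integers,\integers)$ or $(\{1,2\},\pm\{0,1\},\integers,\emptyset,\integers,\integers)$ from Lemma~\ref{lemma:categories-2}, and one must confirm the $V$- and $L$-coordinates intersect down correctly — e.g.\ $\pm\{0,1,2\}\cap\integers=\pm\{0,1,2\}$, and for the neutral-pair rows $\pm\{0,1\}\cap\pm\{0,1,2\}=\pm\{0,1\}$ with the $L$-component $\emptyset\cap m\integers=\emptyset$.

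\textbf{Main obstacle.}
There is no deep difficulty; the entire proof is bookkeeping. The one place demanding care is precisely this last verification — that each of the fourteen tuples is genuinely the meet of convenient ones, i.e.\ that in every coordinate the intersection of the convenient components lands on the nose at the prescribed set, neither too large nor too small. The subtle coordinate is typically the $X$-component, where one must reconcile a set of the form $\integers\backslash D_m$ (or $\integers\backslash N_0$, $\integers\backslash N_0'$, $\integers\backslash E_0$) against the trivial $\integers$ coming from the block-size and same-block factors and against a genuinely restrictive factor from Lemma~\ref{lemma:categories-4}; and the $\toco$-coordinate, where several cyclic subgroups $a\integers$ must intersect to exactly the stated one, which forces a small $\lcm$/$\gcd$ argument distinguishing $u=0$ from $u\geq1$. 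Once all fourteen decompositions are recorded and checked, Lemma~\ref{lemma:R-monotonic-meet-preserving} and the stability of categories under intersection close the argument, and the codomain assertion $\mc R:\mathsf Q\to\nhoc$ follows from the non-hyperoctahedrality check above.
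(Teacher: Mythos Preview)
Your proposal is correct and follows essentially the same route as the paper: name a small stock of convenient tuples (the paper calls them $\mr F_2$, $\mr F_{\leq 2}$, $\mr V_0$, $\mr V_{01}$, $\mr S_m$, $\mr K_m$, $\mr K_m^{\upY}$, $\mr X_{m,E}$, drawn from Lemmata~\ref{lemma:categories-1}--\ref{lemma:categories-4}), write each of the fourteen rows of $\mathsf Q$ as a componentwise intersection of a few of these, invoke Lemma~\ref{lemma:R-monotonic-meet-preserving}, and finish with the straightforward non-hyperoctahedrality check --- the paper simply tabulates the fourteen decompositions rather than arguing them one coordinate at a time. One small slip: by Notation~\ref{integer-notations} you have $E_0=(E\cup(-E))+\{0\}=\pm E$, not $\pm E\cup\{0\}$ (the latter is $E_0'$); this does not affect the symmetry check $E_0=-E_0$ you need for Lemma~\ref{lemma:categories-4}.
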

\begin{proof}
For every family $\mathsf Q'\subseteq \mathsf Q$ holds by Lemma~\ref{lemma:R-monotonic-meet-preserving} 
  \begin{align*}
    \bigcap \{\mc R_Q\mid Q\in \mathsf Q'\}=\mc R_{\bigcap _\times \mathsf Q'}.
  \end{align*}
  We show that for every $Q\in \mathsf Q$ there exists a set  $\mathsf Q'\subseteq \mathsf Q$ such that $Q={\bigcap}_\times \mathsf Q'$ and such that $\mc R_{Q'}$ is a category for every $Q'\in \mathsf Q'$. As $\cotcp$ is a complete lattice, that then proves that $\mc R_Q$ is a category of two-colored partitions for every $Q\in \mathsf Q$. It is straightforward to check that indeed $\PartSinglesWBTensor\in \mc R_Q$ or $\PartFourWBWB\notin \mc R_Q$ for every $Q\in \mathsf Q$.
  \par
For this proof only, we use names for specific elements of the set $\mathsf Q$: The below table applies for all $m\in \{0\}\cup \pint$ and $E\subseteq \integers$ with $E=-E=E+m\integers$. 

\begin{align*}
  \begin{array}{c| c c c c c c |c}
    &F&V&\toco&L&K& X &\\[0.1em]\hline
  \mr F_2& \{2\} &\pm \{0,2\}& 2\integers &\integers &\integers& \integers & {\ref*{lemma:categories-2}~\ref*{lemma:categories-2-2}}\Tstrut\\
  \mr F_{\leq 2}& \{1,2\}&\pm \{0,1,2\}&\integers&\integers&\integers&\integers & {\ref*{lemma:categories-2}~\ref*{lemma:categories-2-1}}\\
  \mr V_0& \{2\}&\{0\}&\{0\}&\emptyset&\integers&\integers & {\ref*{lemma:categories-2}~\ref*{lemma:categories-2-3}}\\
  \mr V_{01}& \{1,2\}&\pm\{0,1\}&\integers&\emptyset&\integers&\integers & {\ref*{lemma:categories-2}~\ref*{lemma:categories-2-4}}\\  
  \mr S_m& \pint&\integers&m\integers&\integers&\integers&\integers & \ref*{lemma:categories-1}\\
  \mr K_m& \pint&\integers&m\integers&m\integers&m\integers&\integers & {\ref*{lemma:categories-3}~\ref*{lemma:categories-3-1}}\\
  \mr K_m^\upY& \{1,2\}&\pm \{0,1,2\}&2m\integers &m\!+\!2m\integers &2m\integers&\integers & {\ref*{lemma:categories-3}~\ref*{lemma:categories-3-2}}\\
  \mr X_{m,E}& \pint&\integers&m\integers&m\integers&m\integers&\integers\backslash E& \ref*{lemma:categories-4}
  \end{array}
\end{align*}
The corresponding sets $\mc R_Q$ for all these elements $Q\in \mathsf Q$ are categories of partitions, as shown by the respective lemma cited in the last column.
\par

The ensuing table lists how every element of $\mathsf Q$ can be written as a meet in $\mathsf L$ of the specific elements defined above. Here, $u\in \{0\}\cup \pint$, $m\in \pint$, $D\subseteq \{0\}\cup\dwi{\lfloor\frac{m}{2}\rfloor}$, $E\subseteq \{0\}\cup\pint$ and $N$ is a subsemigroup of $(\pint,+)$.
                    \begin{align*}
                      \begin{array}{c c c c c c | c}
                        F&V&S&L&K& X &\bigcap_\times  \\ \hline \\[-0.85em]
                        \{2\} & \pm\{0, 2\} & 2um\integers & m\integers & m\integers & \integers & \mr F_2, \mr S_{2um}, \mr K_m\\
                      \{2\} & \pm\{0, 2\} & 2um\integers & m\hspace{-2.5pt}+\hspace{-2.5pt}2m\integers & 2m\integers & \integers & \mr F_2, \mr S_{2um}, \mr K_m^\upY\\
                        \{2\} & \pm \{0, 2\} & 2um\integers & m\hspace{-2.5pt}+\hspace{-2.5pt}2m\integers & 2m\integers & \integers\backslash m\integers & \mr F_2, \mr S_{2um}, \mr K_m^\upY, X_{m,m\integers}\\
                      \{2\} & \{0\} & \{0\} & \emptyset & m\integers & \integers & \mr V_0, \mr K_m\\
                      \{2\} & \pm\{0, 2\} & \{0\} & \{0\} & \{0\} &  \integers\backslash N_0 & \mr F_2, \mr X_{0,N_0}\\
                      \{2\} & \{0\} & \{0\} & \emptyset & \{0\} & \integers\backslash N_0& \mr V_0, \mr X_{0,N_0}\\
                      \{2\} & \{0\} & \{0\} & \emptyset & \{0\} &\integers\backslash N_0'& \mr V_0, \mr X_{0,N_0'}\\
                         \{1,2\}&\pm\{0, 1, 2\} & um\integers & m\integers & m\integers & \integers\backslash D_m& \mr F_{\leq 2}, \mr S_{um}, \mr X_{m,D_m}\\
                                                 \{1,2\}&\pm\{0, 1, 2\} & 2um\integers & m\hspace{-2.5pt}+\hspace{-2.5pt}2m\integers & 2m\integers & \integers\backslash D_m&  \mr S_{2um}, \mr K_m^\upY, \mr X_{m,D_m} \\
                         \{1,2\}&\pm \{0, 1\} & um\integers & \emptyset & m\integers & \integers\backslash D_m& \mr V_{01}, \mr S_{um}, \mr X_{m,D_m}\\
                        \{1,2\}&\pm\{0, 1, 2\} & \{0\} & \{0\} & \{0\} & \integers\backslash E_0& \mr F_{\leq 2}, \mr X_{0,E_0}\\
                      \{1,2\}&\pm\{0,  1\} & \{0\} & \emptyset & \{0\} & \integers\backslash E_0& \mr V_{01}, \mr X_{0,E_0}\\
                        \pint & \integers & um\integers & m\integers & m\integers & \integers\backslash D_m&  \mr S_{um}, \mr X_{m,D_m}\\
                        \pint & \integers & \{0\} & \{0\} & \{0\} & \integers\backslash E_0&\mr X_{0,E_0}
                    \end{array}
                    \end{align*}
                  That concludes the proof.
\end{proof}

\section{Further Categories}
Although we are only interested in non-hyperoctahedral categories in this article, it deserves pointing out that the proof Lemma~\ref{lemma:categories-2} also shows the existence of certain hyperoctahedral categories.
\begin{lemma}
  \label{lemma:block-categories-H}
The set $\{ p\in \Cp\mid V(\{p\})\subseteq g\integers\}$ is a category for every $g\in\{0\}\cup \pint$.
\end{lemma}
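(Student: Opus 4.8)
The plan is to mimic the structure of the proof of Lemma~\ref{lemma:categories-2}, since the set $\mc R_Q$ with $Q=(\pint,g\integers,\integers,\integers,\integers,\integers)$ is exactly $\{p\in \Cp\mid V(\{p\})\subseteq g\integers\}$: no constraint arises from the $F$-, $\toco$-, $L$-, $K$- or $X$-components of $Z(\{\,\cdot\,\})\leq Q$, because $g\integers$ is a subgroup of $\integers$ containing $0$ and $\toco(p)=\sum_{B}\sigma_p(B)\in g\integers$ whenever every block color sum lies in $g\integers$ (cf.\ Equation~\eqref{eq:total-color-sum-block-color-sums}). So by the Alternative Characterization (Lemma~\ref{lemma:characterization-categories}) it suffices to check $\PartIdenW\in \mc R_Q$, which is clear, and that the condition $V(\{\,\cdot\,\})\subseteq g\integers$ is stable under rotations, verticolor reflection, tensor products and erasing turns.

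First I would dispose of the easy operations exactly as in Lemma~\ref{lemma:categories-2}: Lemma~\ref{lemma:Z-rotation} gives $V(\{p^r\})=V(\{p\})\subseteq g\integers$ for rotations; Lemma~\ref{lemma:Z-verticolor-reflection} gives $V(\{\tilde p\})=-V(\{p\})\subseteq -g\integers=g\integers$ since $g\integers$ is symmetric; and Lemma~\hyperref[lemma:Z-tensor-product-2]{\ref*{lemma:Z-tensor-product}~\ref*{lemma:Z-tensor-product-2}} gives $V(\{p\otimes p'\})=V(\{p,p'\})\subseteq g\integers$ for tensor products. For erasing, one reduces as usual (invariance under rotation already established) to the case that $p$ has no upper points and $T$ comprises the two rightmost lower points, and one must show $\sigma_{E(p,T)}(B)\in g\integers$ for every block $B$ of $E(p,T)$.

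The main (and only mildly nontrivial) step is this block color sum computation for erasing. If $B$ is already a block of $p$, then Lemma~\hyperref[lemma:erasing-tricks-2]{\ref*{lemma:erasing-tricks}~\ref*{lemma:erasing-tricks-2}} gives $\sigma_{E(p,T)}(B)=\sigma_p(B)\in g\integers$. Otherwise there exist (not necessarily distinct) blocks $B_1,B_2$ of $p$ with $B_1\cap T,B_2\cap T\neq\emptyset$, $T\subseteq B_1\cup B_2$ and $B=(B_1\cup B_2)\backslash T$, and since $\sigma_p(T)=0$ one gets, just as in Equation~\eqref{eq:categories-2-2},
\begin{align*}
  \sigma_{E(p,T)}(B)=\sigma_p(B_1\cup B_2)=
  \begin{cases}
    \sigma_p(B_1)+\sigma_p(B_2)&\text{if }B_1\neq B_2,\\
    \sigma_p(B_1)&\text{if }B_1=B_2,
  \end{cases}
\end{align*}
which lies in $g\integers$ because $g\integers$ is closed under addition and $\sigma_p(B_1),\sigma_p(B_2)\in g\integers$ by $p\in \mc R_Q$. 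Note that, unlike in Lemma~\ref{lemma:categories-2}, no case analysis on block sizes is needed here, since there is no $F$-constraint to maintain. This completes the verification, and the lemma follows. The one point to state carefully in the writeup is the reduction sentence explaining why, having already proved rotation-invariance, we may assume $p$ has no upper points and $T$ sits at the right end of the lower row — otherwise the argument is entirely routine.
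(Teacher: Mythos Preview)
Your proposal is correct and follows essentially the same approach as the paper: both use the Alternative Characterization, invoke Lemmata~\ref{lemma:Z-rotation}, \ref{lemma:Z-verticolor-reflection} and~\ref{lemma:Z-tensor-product}~\ref{lemma:Z-tensor-product-2} for rotations, verticolor reflection and tensor products, and handle erasing via the block color sum computation already carried out in the proof of Lemma~\ref{lemma:categories-2} (your displayed identity is exactly Equation~\eqref{eq:categories-2-2}). The paper's version is just more terse, summarizing the erasing step as $V(\{E(p,T)\})\subseteq V(\{p\})\cup (V(\{p\})+V(\{p\}))$ rather than rewriting the case split.
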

\begin{proof}
  We use the Alternative Characterization (Lemma~\ref{lemma:characterization-categories}) of categories. Lemmata~\ref{lemma:Z-rotation}, \ref{lemma:Z-verticolor-reflection} and~\hyperref[lemma:Z-tensor-product-2]{\ref*{lemma:Z-tensor-product}~\ref*{lemma:Z-tensor-product-2}} imply that $V(\{\,\cdot\,\})\subseteq g\integers$ is stable under rotations, verticolor reflection and tensor products. While verifying Lemma~\ref{lemma:categories-2} we showed that for every  $p\in \Cp$, every turn $T$ in $p$ and every block $B$ of $E(p,T)$ the following is true: Either $B$ is a block of $p$ and then $\sigma_{E(p,T)}(B)=\sigma_p(B)$ or $B$ is not a block of $p$ and then there are blocks $B_1$ and $B_2$ of $p$ such that $\sigma_{E(p,T)}(B)=\sigma_p(B_1)+\sigma_p(B_2)$ or $\sigma_{E(p,T)}(B)=\sigma_p(B_1)$. Thus,  $V(\{E(p,T)\})\subseteq V(\{p\})\cup (V(\{p\})+V(\{p\}))$, which proves the claim.
\end{proof}

\printbibliography

\end{document}